\newcommand{\Z}{\mathbb Z}
\def\row#1#2{{#1}_1,\ldots ,{#1}_{#2}}
\def\row#1#2{{#1}_1,\ldots ,{#1}_{#2}}
\title{Roughly Weighted Hierarchical Simple Games}
\author{Ali Hameed \and Arkadii Slinko}
\institute{
          Ali Hameed \at
           Department of Mathematics,
           University of Auckland, 
           Auckland, New Zealand. Tel;: +6493737599 ext. 82448; Fax: +6493737457; 
           \email{a.hameed@math.auckland.ac.nz}
           \and
           Arkadii Slinko \at
           Department of Mathematics,
           University of Auckland, 
           Auckland, New Zealand
           \email{a.slinko@auckland.ac.nz}
}
\begin{document}

\maketitle

\begin{abstract}
Hierarchical simple games - both disjunctive and conjunctive - are natural generalizations of  simple majority games. They  take their origin in the theory of secret sharing. Another important generalization of simple majority games with origin in economics and politics are weighted and roughly weighted majority games.  In this paper we characterize roughly weighted hierarchical games identifying  where the two approaches coincide.
  \keywords{Simple game \and Weighted majority game \and Roughly
    weighted game \and Hierarchical game}
\end{abstract}

\section{Introduction}\label{sec:intro}
In both human and artificial societies sometimes  cooperating agents have different 
status with respect to the activity and certain actions are only allowed to coalitions that 
satisfy certain criteria, e.g., to sufficiently large coalitions or coalitions with players of sufficient seniority. Consider, for example, the situation of a 
money transfer from one bank to another. If the sum to be transferred is sufficiently large this transaction 
must be authorised by three senior tellers {\em or} two vice-presidents. However, two senior tellers and a 
vice-president can also authorise the transaction. Another example of a  hierarchical game is the United Nations Security Council, 
where for the passage of a resolution all five permanent members must vote for it, {\em and} also at least nine members in total.

In the theory of secret sharing one of the key concepts is the access structure of a secret sharing scheme which is the set of all coalitions that are authorized to know the secret.  If a smaller coalition knows the secret, then any larger coalition also knows it.  So the mathematical concept that formalises access structures of secret sharing schemes is that of a {\em simple game} introduced by \citeA{vNM:b:theoryofgames}. 

\citeA{Simmons:1988} was the first to introduce  hierarchical simple games (called access structures in secret sharing) in context of secret sharing and recently they have played a significant role in the theory (see, e.g., \citeA{beimel:360,padro:2010}).
Hierarchical access structures introduced by Simmons stipulate that agents are partitioned into $m$ levels, and a sequence 
of thresholds $k_1<k_2<\ldots<k_m$ is set such that, a coalition is authorised if it has either $k_1$ agents of the first level, or $k_2$ agents of the first two levels, or $k_3$ agents of the first three levels etc. 
These hierarchical structures are now called {\em disjunctive}, since only one of the $m$ conditions must be satisfied for a coalition to be authorized. 
Its natural counterpart is a {\em conjunctive} hierarchical access structure \cite{Tassa:2007}, which uses the same thresholds but requires that all $m$ 
conditions must be met for a coalition to be authorized. The money transfer game above was disjunctive, while the United Nations Security Council game was conjunctive. 

Both types of hierarchical games are natural generalizations of simple majority games in which all players have the same status and, for some $k$, any $k$ of them form a winning coalition. 
The theory of simple games has developed several other natural generalizations of simple majority games such as weighted majority games \cite{vNM:b:theoryofgames}, roughly weighted majority games \cite{tz:b:simplegames,GS2011}, and complete simple games \cite{CF:j:complete}. It is important to know how the hierarchical games are related to these well-studied traditional classes of games.

The classification of weighted disjunctive hierarchical games is given in \citeA{beimel:360}. \citeA{gha:t:hierarchical} gave a simple combinatorial proof of this classification and extended this result to conjunctive hierarchical games.  This paper has also established that the class of conjunctive hierarchical games coincides with the class of complete games with minimum \cite{Frei-Pu:2008} which was introduced from purely theoretical considerations.

 This paper gives a complete characterization of both disjunctive and conjunctive hierarchical roughly weighted simple games. We extensively use techniques of the theory of simple games. The main tool  in our characterization is the structural theorem, which describes disjunctive hierarchical games as complete games with a unique shift-maximal losing coalition \cite{gha:t:hierarchical}. Another important result from that paper that will be used, is the duality between disjunctive and conjunctive hierarchical games.
 
\section{Preliminaries}

\subsection{Simple Games and Multisets}
A secret sharing scheme \cite{Simmons:1988,Stinson:1992} stipulates that certain coalitions of users are authorised to know the secret while others are not. It is also required that if a coalition is authorised, then any larger coalition is authorised too. This is formalized in the following definition.
 
\begin{definition}
Let $P=[n]=\{1,2,\ldots, n\}$ be a finite set of players and a collection of subsets $W \subseteq 2^P$ satisfying the following monotonicity condition:
\begin{quote}
if $X\in W$ and $X\subseteq Y$, then $Y\in W$.
\end{quote}
In such case, the pair $G=(P,W)$ is called a {\em simple game}, and the set $W$ is called the set of {\em winning coalitions} of $G$. Coalitions not in $W$ are called {\em losing}.
\end{definition} 

The subset $W $ is completely determined by the set $W_{\text{min}} $ of its minimal winning coalitions.  A player which does not belong to any minimal winning coalitions is called {\em dummy}. He can be removed from any winning coalition without making it losing.

\begin{definition}
A simple game $G=(P,W) $ is called {\em weighted majority game}  if there exist nonnegative weights $\row wn$ and a real number $q$, called {\em quota}, such that 
\begin{equation}
\label{WMG}
X\in W \Longleftrightarrow \sum_{i\in X}w_i\ge q.
\end{equation}
\end{definition}
In secret sharing weighted threshold access structures were introduced by \cite{shamir:1979}. A broader but still well-understood class of games is defined below.

\begin{definition}
A simple game $G$ is called {\em roughly weighted} if there exist non-negative real numbers $\row wn$ and a  real number $q$, called the {\em quota}, not all equal to zero, such that for $X\in 2^P$ the condition $\sum_{i\in X} w_i< q$ implies $X$ is losing, and $\sum_{i\in X} w_i> q$ implies $X$ is winning.  We say that $[q;\row wn]$ is  a {\em rough voting representation} for~$G$.
\end{definition}

We note that in a roughly weighted game nothing can be said about coalitions whose weight is equal to the threshold. There can be both winning and losing ones.

A distinctive feature of many structures is that the set of users is partitioned into subsets and users in each of the subsets have equal status.  In \cite{GS2011,gha:t:hierarchical} we suggested  analyzing such structures with the help of multisets.  Given a simple game $G$ we define a relation $\sim_{G} $ on $P$ by setting $i \sim_{G} j$ if for every set $X\subseteq P$ not containing $i$ and~$j$ 
\begin{equation}
\label{condition}
X\cup \{i\}\in W \Longleftrightarrow X\cup \{j\} \in W.
\end{equation}
We showed that $\sim_{G} $ is an equivalence relation on $P$. Players in the equivalence classes are indistinguishable so the set of players is better viewed in this case as a multiset. Multiset has several types of players with several players in each type.


\begin{definition}
A multiset on the set $[m]$ is a mapping $\mu\colon [m]\to \Z_+$ of $[m]$ into the set of non-negative integers. It is often written in the form
\[
\mu = \{1^{k_1},2^{k_2},\ldots, m^{k_m}\},
\]
where $k_i=\mu(i)$ is called the {\em multiplicity} of $i$ in $\mu$. 
\end{definition}

A multiset $\nu = \{1^{j_1},2^{j_2},\ldots, m^{j_m}\}$ is a submultiset of a multiset $\mu = \{1^{k_1},2^{k_2},\ldots, m^{k_m}\}$, iff $j_i\le k_i$ for all $i=1,2,\ldots, m$. This is denoted as $\nu \subseteq \mu$.\par\smallskip

Given a game $G$ we may also define a relation $\succeq_{G} $ on $P$ by setting $i \succeq_{G} j$ if $
X\cup \{j\}\in W $ implies $ X\cup \{i\} \in W
$
for every set $X\subseteq U$ not containing $i$ and~$j$. 
It is known as Isbell's desirability relation \cite{tz:b:simplegames}. The game is called {\em complete} if $\succeq_{G}$ is a total  order. We also define  $i \succ_{G} j$ as $i \succeq_{G} j$ but not $j \succeq_{G} i$. \par\smallskip

The existence of large equivalence classes relative to $\sim_G$ allows us to compress the information about the game. This is done by the following construction.
Let now $G=(P,W)$ be a game and $\sim_{G}$ be its corresponding equivalence relation. Then $P$ can be partitioned into a finite number of equivalence classes  $P=P_1\cup P_2\cup\ldots\cup P_m$ and suppose $|P_i|=n_i$. Then we put in correspondence to the set of players $P$ a multiset $\bar{P}=\{1^{n_1},2^{n_2},\ldots, m^{n_m}\}$. 
We carry over the game structure to $\bar{P}$ as well by defining the set of winning submultisets $\bar{W}$ by assuming that a submultiset  $Q=\{1^{\ell_1},2^{\ell_2},\ldots, m^{\ell_m}\}$ is winning in $\bar{G}$ if a subset of $P$ containing $\ell_i$ players from $P_i$ for every $i=1,2,\ldots, m$, is winning in~$G $. This definition is correct since the sets $P_i$ are defined in such a way that it does not matter which $\ell_i$ users from $P_i$ are involved. We call $\bar{G}=(\bar{P},\bar{W})$ the canonical representation of $G$.
The relations $\succeq_G$,  $\sim_G$,  $\succ_G$ induce the respective relations $\succeq_{\bar{G}}$,  $\sim_{\bar{G}}$,  $\succ_{\bar{G}}$. We also note that  $1\succ_{\bar{G}} 2\succ_{\bar{G}}\ldots \succ_{\bar{G}} m$.

\begin{definition}
A pair $G=(P,W)$ where $P = \{1^{n_1},2^{n_2},\ldots, m^{n_m}\}$ and  $W$ is a  system of submultisets of the multiset $P$ is said to be a {\em simple game} on a multiset of players $P$ if $X\in W $ and $X\subseteq Y$ implies $Y\in W$.  Submultisets of $P$ we will call {\em coalitions}.
\end{definition}

\begin{definition}
We say that $G=(P,W)$ on a multiset of players $P$ is a weighted majority game if there exist non-negative weights $\row wm$ and $q\ge 0$ such that  a coalition $Q=\{1^{\ell_1},2^{\ell_2},\ldots, m^{\ell_m}\}$ is winning in $G $ iff $\sum_{i=1}^m \ell_iw_i\ge q$.
\end{definition}

It is a well-known fact that any weighted game can be given a voting representation in which players of equal Isbell's desirability have equal weights \cite{tz:b:simplegames}. However we need a similar statement that would be also applicable to roughly weighted games.

\begin{lemma}
\label{equal_weights}
A simple game  $G=(P,W)$ is a roughly weighted majority game if and only if the corresponding simple game $\bar{G}=(\bar{P},\bar{W})$ is. 
\end{lemma}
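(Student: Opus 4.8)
The plan is to prove the two implications separately; the forward implication ``$G$ roughly weighted $\Rightarrow \bar G$ roughly weighted'' carries all the content, while the reverse is a matter of unpacking multiplicities.

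For the reverse implication, suppose $\bar G=(\bar P,\bar W)$ has a rough voting representation $[q;v_1,\ldots,v_m]$, where $v_i$ is the weight of type $i$. Give every player of $P$ lying in the class $P_i$ the weight $v_i$. A set $X\subseteq P$ containing $\ell_i$ players of $P_i$ then has total weight $\sum_{i=1}^m\ell_i v_i$, and by the construction of $\bar G$ it is winning in $G$ precisely when the submultiset $\{1^{\ell_1},\ldots,m^{\ell_m}\}$ is winning in $\bar G$. Hence $\sum_i\ell_i v_i<q$ forces $X$ losing and $\sum_i\ell_i v_i>q$ forces $X$ winning, so we obtain a rough voting representation of $G$ (with the same quota $q$, hence not identically zero).

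For the forward implication, suppose $G=(P,W)$ has a rough voting representation $[q;w_1,\ldots,w_n]$. This need not respect $\sim_G$, so I would symmetrize it. First, whenever $i\sim_G j$ the transposition $(i\,j)$ is an automorphism of $G$: by \eqref{condition} it sends winning coalitions to winning and losing to losing. Therefore the group $\Gamma=\mathrm{Sym}(P_1)\times\cdots\times\mathrm{Sym}(P_m)$, generated by such transpositions, acts on $G$ by automorphisms. Set $w_i^{*}=|\Gamma|^{-1}\sum_{\sigma\in\Gamma}w_{\sigma(i)}$. These weights are non-negative, sum to $\sum_i w_i$, and are constant on each class $P_i$. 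The point to check is that $[q;w_1^{*},\ldots,w_n^{*}]$ is still a rough voting representation of $G$. For any coalition $X$ one has $\sum_{i\in X}w_i^{*}=|\Gamma|^{-1}\sum_{\sigma\in\Gamma}\sum_{j\in\sigma(X)}w_j$; if $X$ is losing then each $\sigma(X)$ is losing, so each inner sum is $\le q$ and thus $\sum_{i\in X}w_i^{*}\le q$, and symmetrically $\sum_{i\in X}w_i^{*}\ge q$ when $X$ is winning. Transferring the common value $\tilde w_i$ of $w^{*}$ on $P_i$ to type $i$ of $\bar G$, and arguing exactly as in the reverse implication, then yields a rough voting representation of $\bar G$.

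The only subtle point is whether the averaging step can break the rough-representation property by moving some coalition across the quota. It cannot: being roughly weighted is governed solely by the one-sided conditions ``losing $\Rightarrow$ weight $\le q$'' and ``winning $\Rightarrow$ weight $\ge q$'', the quota $q$ is left untouched, and a convex combination of numbers all $\le q$ (resp.\ all $\ge q$) is again $\le q$ (resp.\ $\ge q$); coalitions of weight exactly $q$ are never constrained by the argument. I expect this observation, together with the verification that each $(i\,j)$ with $i\sim_G j$ is an automorphism, to be the crux, with everything else being bookkeeping.
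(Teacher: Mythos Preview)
Your proof is correct and follows essentially the same approach as the paper: both replace each player's weight by the average weight over its $\sim_G$-class (your $w_i^*$ coincides with the paper's $u_i=\frac{1}{|[i]|}\sum_{j\in[i]}w_j$, since averaging over $\Gamma$ amounts to averaging over the class). The only difference is in the verification that the averaged weights still give a rough representation: the paper uses an extremal argument (swap each element of $X$ for the heaviest equivalent one to get $X^+$, compare old weight of $X^+$ with new weight of $X$), whereas you argue via convexity over the $\Gamma$-orbit of $X$; these are two phrasings of the same idea.
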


\begin{proof}
Suppose there are  $m$ equivalence classes $\row Pm$ of players and let us denote $[i]$ the equivalence class to which $i$ belongs. The statement is nontrivial only in one direction. The nontrivial part is to prove that if the game $G$ on $P$ is roughly weighted, then the game $\bar{G}$ on $\bar{P}$ is also roughly weighted. So suppose that there exists a system of weights $\row wn$ and the quota $q\ge 0$, not all equal to zero, such that    $\sum_{i\in X}w_i > q$ implies $X\in W$ and  $\sum_{i\in X}w_i < q$ implies $X\in L$. Our statement will be proved if we can find another system of weights $\row un$ for $G $ which satisfy two conditions:
 \begin{enumerate}
 \item[(i)] $u_i=u_j$ if $[i]=[j]$,
 \item[(ii)]  ${\sum_{i\in X}u_i > q}$ implies $X \in W$.
  \item[(iii)]  ${\sum_{i\in X}u_i < q}$ implies $X \notin W$.
 \end{enumerate}
 
 We define this alternative system of weights by setting
$
 u_i=\frac{1}{|[i]|}\sum_{j\in [i]} w_j,
$
i.e., we replace the weight of $i$th user with the average weight of users in the equivalence class  to which $i$ belongs. It obviously satisfies (i). Let us prove that it satisfies (ii).

Let $X\subseteq P $ and ${\sum_{i\in X}u_i > q}$. Let $k_i=|X\cap P_i|$. Let $X^+$ be the subset of $P$ which results in replacing in $X$, for all $i=1,2,\ldots, m$, all $k_i$ elements of $P_i$ with the ``heaviest'' elements from the same class. Then the weight of $X^+$ relative to the old system of weights is greater or equal to $\sum_{i\in X}u_i $ and hence greater than $q$. So $X^+$ is winning in $G$, and so is $X$, because we replaced all elements with  equivalent ones. (iii) is proved similarly.
\end{proof}

\begin{definition}[Hierarchical Games] 
\label{HG}
Suppose the set of players $P$ is partitioned into $m$ disjoint subsets $P=\cup_{i=1}^m P_i$ and let $k_1<k_2<\ldots<k_m$ be a sequence of positive integers. Then we define the game $H_\exists=(P,W)$ by setting
\[
W = \{ X\in 2^P\mid \exists i \left(\left|X\cap \left(\cup_{j=1}^i P_i\right)\right|\ge k_i\right) \}.
\]
and call it a {\em disjunctive hierarchical game}. For a sequence of thresholds $k_1<\ldots <k_{m-1}\le k_m$ we may define
\[
W = \{ X\in 2^P\mid \forall i \left(\left|X\cap \left(\cup_{j=1}^i P_i\right)\right|\ge k_i\right) \}.
\]
We call the resulting game a {\em conjunctive hierarchical game} $H_\forall=(P,W)$.
\end{definition} 

From this definition it follows that any hierarchical game is complete with $p\succeq_Hq$ iff $p\in P_i$, $q\in P_j$ and $i<j$. However, for arbitrary values of parameters $\row km$ and $\row nm$, where $n_i=|P_i|$, we cannot guarantee that the multiset representation $\bar{H}$ of $H$ will be the defined on the multiset $\bar{P}=\{1^{n_1},2^{n_2},\ldots, m^{n_m}\}$, since it may be possible to have $u\sim_{H}v$ for some $u\in P_i$ and $v\in P_j$ for $i\ne j$. So we can guarantee
\begin{equation}
\label{non-strict12...m}
1\succeq_H2\succeq_H3\succeq_H\ldots\succeq_Hm
\end{equation}
but cannot guarantee that these inequalities are strict. Some distinct classes of the partition may collapse into a single class. When this does not happen the representation is called {\em canonical}. 

The necessary and sufficient conditions for a representation to be canonical are obtained in  \cite{gha:t:hierarchical}. They are given in the theorem below.

\begin{theorem}
\label{Thm_1}
Let $H$ be a disjunctive  hierarchical game defined on the set of players $P$ partitioned into $m$ disjoint subsets $P=\cup_{i=1}^m P_i$, where $n_i=|P_i|$, by a sequence of positive thresholds $k_1<k_2<\ldots<k_m$.  Then the multiset representation $\bar{H}$ of $H $  is defined on $\bar{P}= \{1^{n_1},2^{n_2},\ldots, m^{n_m}\}$ if and only if
\begin{enumerate}
\item[(a)] $k_1\le n_1$, and
\item[(b)] $k_i <  k_{i-1}+n_i$  for every $1<i< m$.
\end{enumerate}
When (a) and (b) hold the sequence  $(\row k{m-1})$ is determined uniquely. Moreover, $H$ does not have dummies if and only if $k_m <  k_{m-1}+n_m$; in this case  $k_m$ is determined uniquely as well. When $k_m \ge  k_{m-1}+n_m$ the last level consists entirely of dummies and in this case we set $k_m =  k_{m-1}+n_m$.  $H$ has passers only in case $k_1=1$, when all players of level 1 are passers.\par
Moreover, for each $i\in \{1,2,\ldots, m-1\}$ there exists a minimal winning coalition of size $k_i$ that is contained in $ \{1^{n_1},\ldots, i^{n_i}\}$ but not in $ \{1^{n_1},\ldots, (i-1)^{n_{i-1}}\}$. If $H$ has no dummies, this is also true for $i=m$.
\end{theorem}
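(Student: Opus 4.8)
The plan is to reduce every assertion to one combinatorial observation about coalition profiles. For a coalition $X\subseteq P$ put $a_i=|X\cap P_i|$; then $X$ is losing exactly when $\sum_{j\le i}a_j\le k_i-1$ for all $i$ (it is convenient to set $k_0:=1$). Since two players of the same $P_i$ are interchangeable in the defining condition, the $\sim_H$-classes are unions of the $P_i$'s, and since $1\succeq_H 2\succeq_H\cdots\succeq_H m$ is a chain each $\sim_H$-class is an interval $P_s\cup\cdots\cup P_t$. Hence $\bar H$ is defined on $\bar P$ if and only if $P_i\not\sim_H P_{i+1}$ for every $i\in\{1,\dots,m-1\}$, and this is what I would check.

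First I would pin down when $P_i\sim_H P_{i+1}$. Adding a level-$i$ or a level-$(i+1)$ player to a coalition $X$ (not containing the two players $p\in P_i$, $q\in P_{i+1}$ in question) affects only the $k_i$-condition, so $P_i\not\sim_H P_{i+1}$ iff some such $X$ satisfies $\sum_{j\le i}a_j=k_i-1$, $\sum_{j\le i-1}a_j\le k_{i-1}-1$, $\sum_{j\le\ell}a_j\le k_\ell-2$ for $\ell>i$, and --- crucially, since $p$ is omitted --- $a_i\le n_i-1$. Combining $\sum_{j\le i}a_j=k_i-1$ with $\sum_{j\le i-1}a_j\le k_{i-1}-1$ and $a_i\le n_i-1$ forces $k_i\le k_{i-1}+n_i-1$, i.e.\ condition (b) (and, taking $i=1$, condition (a)). Conversely, if (a) and (b) hold such an $X$ exists: use the greedy profile $a_1=k_1-1$, $a_j=k_j-k_{j-1}$ for $2\le j\le i$, $a_j=0$ for $j>i$; then (a) and (b) give $a_i\le n_i-1$, strict monotonicity of the thresholds gives $k_i-1\le k_\ell-2$ for $\ell>i$, and the remaining inequalities are immediate. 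If instead (a) fails, the $k_1$-condition can never be met, so a level-$1$ player behaves exactly like a level-$2$ one and $P_1\sim_H P_2$; if (b) fails at $i$, the same obstruction makes the $k_i$-condition redundant against the $k_{i-1}$-condition and $P_i\sim_H P_{i+1}$. This proves the stated equivalence for (a) and (b).

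Next, for each $i\le m-1$ I would realize the required minimal winning coalition as the profile $a_1=k_1-1,\dots,a_{i-1}=k_{i-1}-k_{i-2}$, $a_i=k_i-k_{i-1}+1$, $a_j=0$ for $j>i$: conditions (a), (b) give $1\le a_i\le n_i$, its size is $k_i$, it wins exactly via the $k_i$-condition, and deleting any player destroys every threshold, so it is a minimal winning coalition contained in $\{1^{n_1},\dots,i^{n_i}\}$ but not in $\{1^{n_1},\dots,(i-1)^{n_{i-1}}\}$. Conversely, any minimal winning $X$ inside $\{1^{n_1},\dots,i^{n_i}\}$ with $a_i\ge 1$ cannot win via a threshold $k_\ell$ with $\ell<i$ (its restriction to levels $\le\ell$ would already win), so it wins via $k_i$ and, by minimality, $|X|=k_i$. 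Everything else then follows: $k_i$ ($i\le m-1$) is determined by the game, so $(k_1,\dots,k_{m-1})$ is unique; every level $<m$ meets a minimal winning coalition, hence has no dummies; the same computation at level $m$ yields a minimal winning coalition containing a level-$m$ player iff $k_m<k_{m-1}+n_m$, in which case $k_m$ is recovered as their common size, while if $k_m\ge k_{m-1}+n_m$ the $k_m$-condition is implied by the $k_{m-1}$-condition, level $m$ consists of dummies, and $k_m$ may be reset to $k_{m-1}+n_m$ without changing $H$. Finally $\{p\}\in W$ iff $k_\ell\le 1$ for some $\ell$ with $p\in\bigcup_{j\le\ell}P_j$, i.e.\ iff $k_1=1$ and $p\in P_1$, which is the assertion about passers.

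The hard part is not any single computation but keeping the case analysis airtight: one must watch the boundary indices (the index $1$ versus the range $1<i<m$ in (b), and level $m$ versus the dummy case) and, above all, observe at each step that the relevant coalition is forced to leave one free slot in level $i$ --- it omits the swapped player, or else would fail to be minimal. It is exactly this ``one free slot'' that sharpens the soft bound $k_i\le k_{i-1}+n_i$ into the strict inequality in (b) and makes the characterization tight.
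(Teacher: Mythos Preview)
The paper does not actually prove Theorem~\ref{Thm_1}: it is quoted from \cite{gha:t:hierarchical} and stated without proof here, so there is no ``paper's own proof'' to compare against. That said, your argument is a correct self-contained proof.

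Your reduction of ``$\bar H$ lives on $\bar P$'' to the $m-1$ local conditions $P_i\not\sim_H P_{i+1}$ is sound, and the analysis of when a witnessing $X$ exists is accurate: the key point, which you identify, is that the swapped player $p\in P_i$ is excluded from $X$, forcing $a_i\le n_i-1$ and hence the \emph{strict} inequality $k_i<k_{i-1}+n_i$. The greedy profile you exhibit does the job for sufficiency (one should also note $a_{i+1}\le n_{i+1}-1$ since $q$ is excluded, but your profile has $a_{i+1}=0$, so this is automatic). Your construction of the minimal winning coalition of size $k_i$ at each level and the recovery of $k_i$ from the game are correct; the one phrase that could be tightened is ``so it wins via $k_i$'': strictly, a minimal winning $X\subseteq\bigcup_{j\le i}P_j$ with $a_i\ge1$ might a priori win via some $k_\ell$ with $\ell>i$, but since $a_j=0$ for $j>i$ the cumulative sums are constant for $\ell\ge i$, so $|X|\ge k_\ell\ge k_i$ and then minimality forces $|X|=k_i$ as you claim. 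The treatment of dummies at level $m$ and of passers is straightforward and correct.
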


For a conjunctive hierarchical game there are also $m$ thresholds $k_1<\ldots <k_{m-1} \le k_m$ (note the possibility of having an equality between $k_{m-1}$ and $k_m$). The same conditions (a) and (b) are necessary and sufficient too. In this case the  last level consists entirely of dummies if and only if $k_{m-1}=k_m$ and the first level consists of blockers if and only if $k_1=n_1$. \par\medskip

By $H_\exists({\bf n},{\bf k})$ and $H_\forall({\bf n},{\bf k})$ we will denote the $m$-level hierarchical access structure canonically represented by  ${\bf n}=(\row nm)$ and ${\bf k}=(\row km)$ for which the conditions (a) and (b) hold. 

The following corollary from Theorem~\ref{Thm_1} proved by  \citeA{gha:t:hierarchical} will be also important later.

\begin{corollary}
\label{n_2>1}
Let $G=H_\exists({\bf n},{\bf k})$ be a hierarchical game with $m$ levels in its canonical representation. Then  we have $n_i>1$  for every $1<i< m$.
\end{corollary}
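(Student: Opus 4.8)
The plan is a short proof by contradiction, using condition (b) of Theorem~\ref{Thm_1} together with the strict monotonicity of the threshold sequence. Fix an index $i$ with $1<i<m$. Since $G=H_\exists({\bf n},{\bf k})$ is given in its canonical form and genuinely has $m$ levels, each level is a non-empty equivalence class of $\sim_G$, so $n_i$ is a positive integer; hence proving $n_i>1$ amounts to excluding the single remaining value $n_i=1$.

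Suppose, for contradiction, that $n_i=1$. By Theorem~\ref{Thm_1}(b) we have $k_i<k_{i-1}+n_i=k_{i-1}+1$, and since the thresholds are integers this yields $k_i\le k_{i-1}$. But a disjunctive hierarchical game is defined by a strictly increasing sequence $k_1<k_2<\cdots<k_m$, so $k_{i-1}<k_i$, a contradiction. Therefore $n_i\ge 2$, i.e.\ $n_i>1$, for every $i$ with $1<i<m$. (When $m\le 2$ there is no such $i$ and the statement is vacuous.)

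I do not expect any real obstacle here; the only point that needs a word of care is the preliminary observation that in the canonical $m$-level representation every $n_i\ge 1$, so that it suffices to rule out $n_i=1$. If one prefers an argument phrased purely in terms of coalitions rather than invoking (b), one can instead use the ``moreover'' clause of Theorem~\ref{Thm_1}: it provides a minimal winning coalition $A$ of size $k_i$ contained in $\{1^{n_1},\ldots,i^{n_i}\}$ but not in $\{1^{n_1},\ldots,(i-1)^{n_{i-1}}\}$, so $A$ contains a level-$i$ player; assuming $n_i=1$, this is the only level-$i$ player in $A$, and deleting it leaves a subcoalition $B$ of size $k_i-1\ge k_{i-1}$ lying entirely in the first $i-1$ levels, which is therefore winning by the disjunctive criterion — contradicting the minimality of $A$. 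Either route gives the claim.
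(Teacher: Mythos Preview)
Your proof is correct and is exactly the intended argument: the paper presents this as an immediate corollary of Theorem~\ref{Thm_1} without spelling out details, and the one-line deduction from condition~(b) together with the strict monotonicity $k_{i-1}<k_i$ is precisely how it follows. The alternative coalition-based argument you sketch is also fine but unnecessary here.
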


\subsection{Subgames, Reduced Games and Duality}

\begin{definition}
Let $G=(P,W)$ be a simple game with $A\subseteq P$. Let us define subsets $W_{\text{sg}}\subseteq W$ and $W_{\text{rg}}\subseteq W$ by
\[
W_{\text{sg}}=\{X\subseteq  A^c\mid X\in W\}, \  
W_{\text{rg}}=\{X\subseteq A^c\mid X\cup A\in W\},
\]
where $A^c=P\setminus A$. Then the game $G_A=(A^c,W_\text{sg})$ is called a {\em subgame} of $G$ and $G^A=(A^c,W_\text{rg})$ is called a {\em reduced game} of $G$. Any game $H$ that is obtained from $G$ by a sequence of operations of taking subgame or a reduced game is called a {\em minor}.
\end{definition}

Let us now briefly recap the concept of duality in games. The {\em dual game} of a
game $G=(P,W)$ is defined as $G^{*} = (P,L^{c})$. Equivalently,
the winning coalitions of the game $G^{*}$ dual to $G$ are
exactly the complements of losing coalitions of $G$. We have $G=G^{**}$.
%
We also note that if $A \subseteq P$, then $(G_A)^{*} = (G^{*})^A$ and $(G^A)^{*} = (G^{*})_A$. Moreover, the operation of taking the dual is known to  preserve weightedness and rough weightedness (\citeA{tz:b:simplegames}, Proposition 4.10.1(i), p. 166). We need to be a bit more precise here.

\begin{lemma}
\label{subs_and_reducs}
Let $G=(P,W)$ be a roughly weighted game with rough voting representation $[q;\row wn]$. Suppose that $A\subseteq P$ such that $w(b)>0$ for some $b\in A^c$. Then the subgame $G_A$ and the reduced game $G^A$ are roughly weighted. In rough voting representations of $G_A$ and $G^A$ the weights of players are the same as in $G$  and quotas are $q$ and $\max(0, q-w(A))$, respectively. 
\end{lemma}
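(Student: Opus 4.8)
The plan is to restrict the given rough voting representation $[q;\row wn]$ of $G$ to the players of $A^c$ and verify directly that the two defining implications of rough weightedness survive; monotonicity of $G_A$ and $G^A$ is routine and needs no comment. Throughout write $w(S)=\sum_{i\in S}w_i$ for $S\subseteq P$.

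For the subgame $G_A=(A^c,W_{\mathrm{sg}})$ I would claim that $[q;(w_i)_{i\in A^c}]$ is a rough voting representation. Indeed, for $X\subseteq A^c$ we have $X\in W_{\mathrm{sg}}$ iff $X\in W$; hence $w(X)<q$ forces $X\notin W$, so $X$ is losing in $G_A$, and $w(X)>q$ forces $X\in W$, so $X$ is winning in $G_A$. These are exactly the conditions required, and the restricted weight system is not identically zero because $b\in A^c$ contributes $w(b)>0$ --- this is precisely where the hypothesis on $b$ is used.

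For the reduced game $G^A=(A^c,W_{\mathrm{rg}})$ I would take the quota $q'=\max(0,q-w(A))$ and the same weights $(w_i)_{i\in A^c}$ (again nonzero thanks to $b$). The key identity is $w(X\cup A)=w(X)+w(A)$ for $X\subseteq A^c$, together with $X\in W_{\mathrm{rg}}$ iff $X\cup A\in W$. Suppose $w(X)<q'$: if $q-w(A)>0$ then $q'=q-w(A)$, so $w(X\cup A)<q$ and $X$ is losing in $G^A$; if $q-w(A)\le 0$ then $q'=0$, and $w(X)<0$ is impossible since the weights are non-negative, so the implication holds vacuously. Conversely, $w(X)>q'$ gives $w(X)>q-w(A)$ in either case, hence $w(X\cup A)>q$ and $X$ is winning in $G^A$; and $q'\ge 0$ as required of a quota. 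I do not foresee a genuine obstacle here: the content is a short verification, the only mild subtlety being that the would-be quota $q-w(A)$ may be negative, which is harmless exactly because all weights are non-negative and is absorbed by the truncation $\max(0,\cdot)$.
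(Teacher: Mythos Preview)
Your proof is correct and follows exactly the approach the paper uses: restrict the weight function to $A^c$ and keep the quota $q$ for $G_A$, and replace it by $q'=\max(0,q-w(A))$ for $G^A$. The paper's own proof is in fact only two sentences asserting precisely this, so your write-up is simply a fuller verification of the same idea (including the observation that the hypothesis $w(b)>0$ for some $b\in A^c$ is what guarantees the restricted weight system is not identically zero).
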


\begin{proof}
The subgame $G_A$ is realised as a roughly weighted game by the restriction $w|_{A^c}$ together with the original quota. The reduced game $G^A$ is realised as a roughly weighted game by the restriction $w|_{A^c}$ together with the quota $q'=\max(0, q-w(A))$. 
\end{proof}

We will also use the fact that Isbell's desirability relation is self-dual, that is $x\succeq_G y$ if and only if
$x\succeq_{G^{*}} y$ \cite{tz:b:simplegames}.
All these concepts can be immediately reformulated for the games on multisets if we define the complement $X^c$ of a submultiset $X=\{1^{\ell_1}, \ldots, m^{\ell_m}\}$ in $P=\{1^{n_1},\ldots, m^{n_m}\}$ as
the submultiset
\[
X^c=\{1^{n_1-\ell_1}, \ldots, m^{n_m-\ell_m}\}.
\]

Let us now introduce the following notation. Let ${\bf n}=(\row nm)$ be a fixed vector of positive integers. Then for any another such vector  ${\bf k}=(\row km)$ such that conditions (a) and (b) of Theorem~\ref{Thm_1} are satisfied we define the vector
\begin{equation}
\label{transform}
{\bf k}^{*}=(n_1-k_1+1, n_1+n_2 - k_2+1, \ldots, \sum_{i \in [m]} n_i -k_m +1).
\end{equation}
Note that ${\bf k}^{{*}{*}}={\bf k}$. 

\begin{theorem}[\citeA{gha:t:hierarchical}]
\label{all-exists-dual}
Let $H=H_\exists({\bf n},{\bf k})$ be an $m$-level hierarchical disjunctive game. 
Then the game  dual to $H$ will be the conjunctive hierarchical game  $H^{*}=H_\forall({\bf n}, {\bf k}^{*})$. Similarly, if 
$H=H_\forall({\bf n},{\bf k})$ is an $m$-level hierarchical conjunctive game, then $H^{*}=H_\exists({\bf n}, {\bf k}^{*})$.
In particular,
\[
H_\exists({\bf n},{\bf k})^{*}=H_\forall({\bf n}, {\bf k}^{*}),\qquad H_\forall({\bf n},{\bf k})^{*}=H_\exists({\bf n}, {\bf k}^{*}).
\]
\end{theorem}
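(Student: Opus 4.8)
The plan is to compute the dual of a disjunctive hierarchical game straight from the definitions by passing to complements, recognise the outcome as a conjunctive hierarchical game with threshold vector ${\bf k}^{*}$, and then deduce the conjunctive-to-disjunctive statement from the fact that both $G\mapsto G^{*}$ and ${\bf k}\mapsto{\bf k}^{*}$ are involutions. For the main computation, fix $H=H_\exists({\bf n},{\bf k})$ with levels $P=\cup_{i=1}^{m}P_{i}$ and $n_{i}=|P_{i}|$. By definition of the dual, a coalition $X$ is winning in $H^{*}$ exactly when $X^{c}$ is losing in $H$; negating the disjunctive winning condition of Definition~\ref{HG}, this means
\[
\left|X^{c}\cap\left(\cup_{j=1}^{i}P_{j}\right)\right|\le k_{i}-1\qquad\text{for every }i\in[m].
\]
Writing $\left|X^{c}\cap\left(\cup_{j=1}^{i}P_{j}\right)\right|=\sum_{j=1}^{i}n_{j}-\left|X\cap\left(\cup_{j=1}^{i}P_{j}\right)\right|$ and rearranging, this is equivalent to
\[
\left|X\cap\left(\cup_{j=1}^{i}P_{j}\right)\right|\ge\sum_{j=1}^{i}n_{j}-k_{i}+1\qquad\text{for every }i\in[m],
\]
and the right-hand side is precisely the $i$-th coordinate of ${\bf k}^{*}$ from~(\ref{transform}). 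Hence, as a family of winning sets, $H^{*}$ coincides with the conjunctive hierarchical game with thresholds ${\bf k}^{*}$; the same one-line complement computation applied to the conjunctive winning condition shows symmetrically that the dual of $H_\forall({\bf n},{\bf k})$ is the family $H_\exists({\bf n},{\bf k}^{*})$.

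It remains to check the bookkeeping: that ${\bf k}^{*}$ is a genuine, canonically represented threshold vector of the opposite type on the same multiset ${\bf n}$, i.e. that conditions (a), (b) of Theorem~\ref{Thm_1} and the appropriate monotonicity hold. Using $k_{i+1}^{*}-k_{i}^{*}=n_{i+1}-(k_{i+1}-k_{i})$ one reads off directly that $k_{1}^{*}<\cdots<k_{m-1}^{*}$ is equivalent to condition (b) for ${\bf k}$, that condition (b) for ${\bf k}^{*}$ is equivalent to $k_{i-1}<k_{i}$ for $1<i<m$, that $k_{m-1}^{*}\le k_{m}^{*}$ is equivalent to $k_{m}\le k_{m-1}+n_{m}$, and that $k_{1}^{*}=n_{1}-k_{1}+1\le n_{1}$ because $k_{1}\ge1$; together these pass conditions (a), (b) and the required ordering from ${\bf k}$ to ${\bf k}^{*}$ (and back). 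Because Isbell's desirability relation --- hence the partition of the players into $\sim$-equivalence classes --- is self-dual, the dual game has exactly the $m$ levels of $H$, so the canonical $m$-level representation survives dualisation and dummies of $H$ are dummies of $H^{*}$; combined with the previous paragraph this yields $H_\exists({\bf n},{\bf k})^{*}=H_\forall({\bf n},{\bf k}^{*})$ as canonically represented games.

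For the conjunctive-to-disjunctive direction, given $H=H_\forall({\bf n},{\bf k})$ the same parameter check, read in reverse, shows that ${\bf k}^{*}$ is a legitimate disjunctive threshold vector, so $H_\exists({\bf n},{\bf k}^{*})$ is defined; applying the first part to it and using ${\bf k}^{{*}{*}}={\bf k}$ gives $H_\exists({\bf n},{\bf k}^{*})^{*}=H_\forall({\bf n},{\bf k})=H$, whence $H^{*}=H_\exists({\bf n},{\bf k}^{*})$ since $G=G^{**}$; the four displayed identities follow immediately. The point demanding actual care --- rather than routine complement arithmetic and inequality chasing --- is matching the normalisation conventions at the top level: one must confirm that an all-dummy last level of a disjunctive game, i.e. $k_{m}=k_{m-1}+n_{m}$, corresponds under ${\bf k}\mapsto{\bf k}^{*}$ to an all-dummy last level of a conjunctive game, i.e. $k_{m-1}^{*}=k_{m}^{*}$, and symmetrically, so that the uniquely determined canonical labels on the two sides genuinely agree.
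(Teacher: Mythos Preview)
Your argument is correct: the complement computation turning the negated disjunctive condition into the conjunctive condition with thresholds ${\bf k}^{*}$ is exactly right, and your verification that conditions (a) and (b) of Theorem~\ref{Thm_1} transfer between ${\bf k}$ and ${\bf k}^{*}$ is clean and complete. The appeal to self-duality of Isbell's desirability relation to preserve the level partition, and the use of the two involutions $G\mapsto G^{**}$ and ${\bf k}\mapsto{\bf k}^{**}$ for the reverse direction, are both appropriate.

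Note, however, that this paper does not actually supply its own proof of Theorem~\ref{all-exists-dual}: the result is quoted from \citeA{gha:t:hierarchical} and stated without proof here, so there is no in-paper argument to compare yours against. Your proof is essentially the natural one and is presumably close in spirit to what appears in the cited source, since the statement admits little room for genuinely different approaches once one unwinds the definitions.
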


\subsection{Weighted Disjunctive and Conjunctive Hierarchical Games}

Firstly, we will describe disjunctive hierarchical weighted games. Beimel, Tassa and Weinreb \citeyear{beimel:360} characterized these without dummies as part of their characterization of ideal weighted threshold secret sharing schemes.  However, their proof is indirect and heavily relies upon the connection between ideal secret sharing schemes and matroids. The following slightly more general theorem is proved by \citeA{gha:t:hierarchical} using a simple and purely combinatorial argument.

\begin{theorem}
\label{BTW}
Let $G=H_\exists({\bf n},{\bf k}) $ be an $m$-level disjunctive hierarchical simple game. For $m>1$ we define the subgame $H_\exists ({\bf n}',{\bf k}')$ where ${\bf n}'=(\row n{m{-}1})$ and ${\bf k}'=(\row k{m{-}1})$. Then $G $ is a weighted majority game iff one of the following conditions is satisfied:
\begin{enumerate}
\item[(1)] $m=1$;
\item[(2)] $m=2$ and $k_2=k_1+1$;
\item[(3)] $m=2$ and $n_2= k_2-k_1+1$;
\item[(4)] $m\in \{2,3\}$ and $k_1=1$. When $m=3$, $G$ is weighted if and only if the subgame $H_\exists({\bf n}',{\bf k}')$  falls under (2) or (3);
\item[(5)] $m\in \{2,3,4\}$,  $k_m = k_{m-1}+n_m$,  and the subgame $H_\exists ({\bf n}',{\bf k}')$ falls under one of the conditions (1) -- (4). 
\end{enumerate}
\end{theorem}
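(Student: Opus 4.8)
One proves both directions together by induction on the number of levels $m$, the case $m=1$ being trivial (all weights $1$, quota $k_1$). For $m\ge 2$ the induction is powered by two elementary reductions. First, if $k_1=1$ the first level consists of passers (Theorem~\ref{Thm_1}), and $G$ is weighted if and only if the subgame $G_{P_1}=H_\exists((n_2,\ldots,n_m),(k_2,\ldots,k_m))$ obtained by deleting them is: a weighted representation of $G$ restricts to $G_{P_1}$, and a representation $[q';\cdot]$ of $G_{P_1}$ extends to $G$ by giving every passer the weight $q'$. Second, if $k_m=k_{m-1}+n_m$ the last level consists of dummies (Theorem~\ref{Thm_1}), and $G$ is weighted if and only if $G_{P_m}=H_\exists(\mathbf{n}',\mathbf{k}')$ is, since dummies may always be given weight $0$ and, conversely, a subgame of a weighted game is weighted. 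Both reduced games are again canonical, conditions (a) and (b) being inherited. Now conditions (4) and (5) are precisely these two reductions combined with the inductive hypothesis applied to the smaller game; the bounds $m\le 3$ in (4) and $m\le 4$ in (5) appear because the reduced game must itself be weighted, hence, by induction, have at most three levels. It therefore remains to analyse the case with no passers ($k_1\ge 2$) and no dummies ($k_m<k_{m-1}+n_m$) and to show that $G$ is then weighted exactly when $m=2$ and one of (2), (3) holds.

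\textbf{The weighted cases.} These are handled by explicit representations. For (2) ($m=2$, $k_2=k_1+1$) take $[\,k_1(k_1+1);\,(k_1+1)^{n_1},k_1^{n_2}\,]$: a coalition with $a$ first-level and $b$ second-level players has weight $(a+b)k_1+a$, which is $\ge k_1(k_1+1)$ exactly when $a\ge k_1$ or $a+b\ge k_1+1$. For (3) ($m=2$, $n_2=k_2-k_1+1$) take $[\,k_1n_2;\,n_2^{n_1},1^{n_2}\,]$: the only winning coalitions with fewer than $k_1$ first-level players have exactly $k_1-1$ of them together with all $n_2$ second-level players, and these weigh exactly $k_1n_2$.

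\textbf{The non-weighted cases.} Since in a weighted game no multiset of winning coalitions can have the same cardinality and the same componentwise sum as a multiset of losing coalitions, it suffices to exhibit one such trading certificate. When $m=2$, the failure of (2), (3) together with ``no dummies'' forces $k_2\ge k_1+2$ and $n_2\ge k_2-k_1+2$, and the winning coalitions $(k_1,0)$, $(k_1-2,k_2-k_1+2)$ and the losing coalitions $(k_1-1,k_2-k_1)$, $(k_1-1,2)$ both sum to $(2k_1-2,k_2-k_1+2)$. When $m=3$, conditions (a), (b) and ``no dummies'' force the existence of a minimal winning coalition $W=(p_0,q_0,r_0)$ winning only through $k_3$ (so $p_0<k_1$, $p_0+q_0<k_2$, $p_0+q_0+r_0=k_3$) with $p_0\le k_1-2$: greedily filling first with third- and then with second-level players forces at most $\max\{0,k_3-n_3-n_2\}$ first-level players, and $k_3<k_2+n_3\le(k_1+n_2-1)+n_3$ gives $k_3-n_3-n_2<k_1-1$. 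Then the winning coalitions $(k_1,0,0)$, $W$ and the losing coalitions $(k_1-1,1,1)$, $(p_0+1,q_0-1,r_0-1)$ both sum to $(k_1+p_0,q_0,r_0)$ (replace the last two by $(k_1-1,0,2)$, $(1,0,k_3-2)$ if $q_0=0$), using $n_2\ge 2$ (Corollary~\ref{n_2>1}) and $n_3\ge 2$ (from $k_2<k_3<k_2+n_3$) to check that all coalitions are genuine submultisets with the stated status. Finally, for $m\ge 4$ the three-level subgame $H_\exists((n_1,n_2,n_3),(k_1,k_2,k_3))$ still satisfies $k_1\ge 2$ and, by (b), $k_3<k_2+n_3$, so it is not weighted by the $m=3$ case, and hence neither is $G$.

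\textbf{Main obstacle.} The explicit representations and the translation between the recursion and the five listed cases are routine bookkeeping. The one genuinely delicate step is the $m=3$ certificate --- specifically, the claim that a coalition winning only through $k_3$ can be chosen with at most $k_1-2$ first-level players. This is the unique point at which both inequalities $k_2<k_1+n_2$ and $k_3<k_2+n_3$ must be invoked simultaneously, and it is exactly what permits a length-two trade rather than a longer chain. The various degenerate situations ($q_0=0$, $n_1=k_1$, a wholly-dummy last level, and so on) then require separate but straightforward checks.
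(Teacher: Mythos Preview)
The paper does not prove this theorem; it is quoted as background with attribution to Beimel--Tassa--Weinreb and to \citeA{gha:t:hierarchical}. So there is no in-paper proof to compare against, and your sketch should be judged on its own.

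Your argument is sound and is presumably close to the ``simple and purely combinatorial argument'' the paper alludes to: peel off passers ($k_1=1$) and dummies ($k_m=k_{m-1}+n_m$), and in the irreducible case give explicit weights when $m\le 2$ with (2) or (3), otherwise exhibit a length-two trading certificate. The two weighted representations are correct. The $m=2$ certificate is correct. In the $m=3$ certificate the delicate step --- finding a winning coalition reaching only the $k_3$ threshold with at most $k_1-2$ first-level players --- is handled properly: your greedy filling uses both canonical inequalities $k_2<k_1+n_2$ and $k_3<k_2+n_3$ simultaneously to force $k_3-n_2-n_3\le k_1-2$, and the fallback pair $(k_1-1,0,2),(1,0,k_3-2)$ for $q_0=0$ is valid since then $k_3\le n_3$. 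The passage to $m\ge 4$ via the three-level subgame is also fine, as condition~(b) guarantees that subgame has no dummies.

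One wrinkle you gloss over. You identify (4) with the passers reduction, so that the relevant smaller game is $H_\exists((n_2,\ldots,n_m),(k_2,\ldots,k_m))$. But the theorem \emph{as written} defines ``the subgame $H_\exists(\mathbf{n}',\mathbf{k}')$'' once, at the top, as the game obtained by deleting the \emph{last} level, and (4) then refers to \emph{that} subgame. Read literally, (4) for $m=3$ asks that $H_\exists((n_1,n_2),(1,k_2))$ fall under (2) or (3), i.e.\ $k_2=2$ or $n_2=k_2$; this is not equivalent to what you prove (for instance $H_\exists((1,4,5),(1,3,4))$ is weighted via $[12;12,4,3]$ but fails the literal condition). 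Your reading is the intended one --- it matches the paper's own commentary after the theorem and the dual Theorem~\ref{conj-char}, where the analogous clause explicitly removes the \emph{first} level --- so this is almost certainly a slip in the statement rather than in your proof. Still, you should flag the discrepancy rather than silently reinterpret the condition.
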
 

We note that the only case when we can have four levels is when the top level and the bottom one are both trivial, that is $k_1=1$ and $k_4= k_3+n_4$.  \cite{beimel:360} do not get four levels (only three) as they allow trivial levels of one kind but not of another. By duality \cite{gha:t:hierarchical}, we obtain a similar characterization in the conjunctive case.

\begin{theorem}
\label{conj-char}
Let $G=H_\forall({\bf n},{\bf k}) $ be an $m$-level conjunctive
hierarchical simple game. Then $G $ is a weighted majority game iff one of
the following conditions is satisfied:
\begin{enumerate}
\item[(1)] $m=1$ and $G$ is a simple majority game;
\item[(2)] $m=2$ and $k_2=k_1+1$;
\item[(3)] $m=2$ and $n_2= k_2-k_1+1$;
\item[(4)] $m\in \{2,3\}$ and $k_1=n_1$,  that is, the game has two or three levels and the first one consists entirely of blockers. In case $m=3$,  the reduced game $H_\forall({\bf n},{\bf
k})^{\{1^{n_1}\}}=H_\forall({\bf n}',{\bf k}')$ of $G$, where ${\bf n}'=(n_2,n_3)$ and
${\bf k}'=(k_2-k_1,k_3-k_1)$, falls under (2) or (3);
\item[(5)] $m\in \{2,3,4\}$ with $k_m = k_{m-1}$, that is the game has up to four levels but the last one consists entirely of dummies. Moreover, the reduced game
$H_\forall^{\{m^{n_m}\}}({\bf n},{\bf k})=H_\forall({\bf n}',{\bf k}')$,
where ${\bf n}'=(n_1,\ldots, n_{m-1})$ and ${\bf k}'=(k_1,\ldots,
k_{m-1})$, falls under one of the (1) -- (4).
\end{enumerate}
\end{theorem}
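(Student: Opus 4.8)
The plan is to deduce the theorem from Theorem~\ref{BTW} by duality. By Theorem~\ref{all-exists-dual}, if $G=H_\forall({\bf n},{\bf k})$ then its dual is the disjunctive hierarchical game $G^{*}=H_\exists({\bf n},{\bf k}^{*})$ with the same level sizes ${\bf n}$ and thresholds ${\bf k}^{*}$ given by~\eqref{transform}; moreover, since taking the dual preserves weightedness, $G$ is a weighted majority game if and only if $G^{*}$ is. So it suffices to apply Theorem~\ref{BTW} to $G^{*}$ and to rewrite each of its five conditions, specialized to $H_\exists({\bf n},{\bf k}^{*})$, as a condition on ${\bf n}$ and ${\bf k}$, checking that the resulting list is exactly (1)--(5) above.

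The non-recursive parts reduce to a short computation with $k_i^{*}=\bigl(\sum_{j\le i}n_j\bigr)-k_i+1$. One finds: $m=1$ for $G^{*}$ iff $m=1$ for $G$, and a one-level hierarchical game of either kind is a simple majority game, hence weighted, so clause~(1) matches; $k_1^{*}=1$ iff $k_1=n_1$, so the ``passer top level'' that appears in clause~(4) of Theorem~\ref{BTW} corresponds here to the ``blocker top level'' $k_1=n_1$; $k_m^{*}=k_{m-1}^{*}+n_m$ iff $k_m=k_{m-1}$, so the ``dummy bottom level'' of clause~(5) corresponds here to the ``dummy bottom level'' $k_m=k_{m-1}$ (the canonicality conventions of Theorem~\ref{Thm_1} are exchanged by ${\bf k}\mapsto{\bf k}^{*}$); and for $m=2$ one has $k_2^{*}=k_1^{*}+1$ iff $n_2=k_2-k_1+1$, while $n_2=k_2^{*}-k_1^{*}+1$ iff $k_2=k_1+1$, so clauses~(2) and~(3) of Theorem~\ref{BTW} swap into (3) and (2) above. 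For the recursive parts of clauses~(4) and~(5) I would use the identity $(G^{A})^{*}=(G^{*})_{A}$: when $k_1=n_1$ the level $P_1$ consists of blockers, the reduced game is $G^{P_1}=H_\forall\bigl((n_2,\ldots,n_m),(k_2-k_1,\ldots,k_m-k_1)\bigr)$, and its dual $(G^{P_1})^{*}=(G^{*})_{P_1}$ is the subgame of $G^{*}$ obtained by deleting its passer top level, which is exactly the subgame tested in the recursion of clause~(4); likewise, when $k_m=k_{m-1}$ the level $P_m$ consists of dummies, $G^{P_m}=H_\forall\bigl((n_1,\ldots,n_{m-1}),(k_1,\ldots,k_{m-1})\bigr)$, and its dual is the subgame tested in clause~(5). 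Since the sets $\{(2),(3)\}$ and $\{(1),(2),(3),(4)\}$ are each invariant under the swap $(2)\leftrightarrow(3)$, the clauses ``falls under (2) or (3)'' and ``falls under one of (1)--(4)'' carry over unchanged, so the two recursions line up; alternatively one can argue directly that a blocker top level (resp.\ a dummy bottom level) may always be re-attached with a sufficiently large (resp.\ zero) weight, so that $G$ is weighted iff $G^{P_1}$ (resp.\ $G^{P_m}$) is, which reduces clauses~(4) and~(5) for $m\in\{3,4\}$ to the corresponding statements with fewer levels.

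The step I expect to be the main obstacle is this bookkeeping in clauses~(4) and~(5): one must verify that $G^{P_1}$ and $G^{P_m}$ are again canonically represented conjunctive hierarchical games with precisely the parameters displayed (this uses conditions~(a) and~(b) for $G$ together with the identities above), keep straight which of $(G_A)^{*}=(G^{*})^{A}$ and $(G^{A})^{*}=(G^{*})_{A}$ is being invoked so that ``delete the trivial level of $G^{*}$'' is paired with ``reduce $G$ by its trivial level'', and handle the bounded level count: four levels occur on the conjunctive side precisely when the top level is a blocker level and the bottom level is a dummy level, which under duality is exactly the disjunctive condition that the top level is a passer level and the bottom level is a dummy level. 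Once this is settled, only the routine arithmetic identities for $k_i^{*}$ indicated above remain.
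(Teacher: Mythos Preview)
Your proposal is correct and follows exactly the route the paper indicates: the paper's entire proof is the single sentence ``By duality \cite{gha:t:hierarchical}, we obtain a similar characterization in the conjunctive case,'' and you have spelled out precisely this duality argument, translating each clause of Theorem~\ref{BTW} for $H_\exists({\bf n},{\bf k}^{*})$ into the corresponding clause for $H_\forall({\bf n},{\bf k})$ via the identities $k_i^{*}=\sum_{j\le i}n_j-k_i+1$ and $(G^{A})^{*}=(G^{*})_{A}$. The bookkeeping you flag as the main obstacle (matching the recursive subgame of the disjunctive side with the recursive reduced game of the conjunctive side in clauses~(4) and~(5)) is indeed the only substantive content, and your treatment of it is sound.
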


\subsection{Complete Games and Structural Theorems}

 Suppose a game $G $ is complete. By a {\em shift} we mean a replacement of a player of a coalition by a less influential player which did not belong to it.  Formally, given a  coalition $X$, player $i\in X$ and another player $j\notin X$ such that $j\prec_{G}i$ we say that the coalition $ (X\setminus \{i\})\cup \{j\} $ is obtained from $X$ by a shift. A winning coalition $X$ is {\em shift-minimal} if every coalition contained in it and every coalition obtained from it by a shift are losing. A losing coalition $Y$ is said to be {\em shift-maximal} if every coalition that contains it is winning and there does not exist another losing coalition from which $Y$ can be obtained by a shift.  These concepts can be immediately reformulated for games  on multisets. 

\begin{definition}
Let $G$ be a complete simple game on a multiset $P=\{1^{n_1},\ldots, m^{n_m}\}$ with 
$
1\succ_{G}2\succ_{G}\ldots \succ_{G} m.
$
Suppose a submultiset
$
A'=\{\ldots,i^{\ell_i},\ldots, j^{\ell_j},\ldots \}
$
has $\ell_i\ge 1$ and $\ell_j<n_j$ for some $i<j$. We say that the submultiset
$
A'=\{\ldots,i^{\ell_i-1},\ldots, j^{\ell_j+1},\ldots \}
$
is obtained from $A$ by a shift. 
\end{definition}
Shift-minimal winning and shift-maximal losing coalitions are then defined straightforwardly. The following theorem  \cite{gha:t:hierarchical} will play a crucial role in this paper. 

\begin{theorem}
\label{shift-maximal-losing}
The class of disjunctive hierarchical simple games is exactly the class of complete games with a unique shift-maximal losing coalition.  The class of conjunctive hierarchical simple games is exactly the class of complete games with a unique shift-minimal winning coalition. 
\end{theorem}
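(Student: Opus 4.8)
The plan is to prove the disjunctive half directly and then obtain the conjunctive half by duality. Throughout I work with the canonical (multiset) representation, in which $1\succ_{G}2\succ_{G}\cdots\succ_{G}m$. The first move is a reformulation: in a complete game, \emph{the shift-maximal losing coalitions are exactly the maximal losing coalitions of the partial order $\preceq$ generated by single additions of players and single up-shifts} (replacing a player by a strictly more desirable one). Monotonicity and Isbell's desirability relation show that the losing coalitions form a down-set for $\preceq$, so a losing $Y$ is shift-maximal iff no losing coalition lies strictly $\preceq$-above it. I would also record the Hall-type lemma that $X\preceq L$ iff $|X\cap(P_1\cup\cdots\cup P_i)|\le|L\cap(P_1\cup\cdots\cup P_i)|$ for every $i$ (there is an injection of the multiset $X$ into $L$ carrying each player to one of the same or smaller level index precisely when these ``staircase'' inequalities hold, and from such an injection one builds the required additions and up-shifts). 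This lemma is the only genuinely non-trivial auxiliary fact.

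For the direction ``hierarchical $\Rightarrow$ unique shift-maximal losing'', given $G=H_\exists(\mathbf n,\mathbf k)$ I would exhibit the candidate $L^{*}=\{1^{k_1-1},2^{k_2-k_1},\dots,m^{k_m-k_{m-1}}\}$; conditions (a) and (b) of Theorem~\ref{Thm_1} ensure this is a well-defined coalition with $|L^{*}\cap(P_1\cup\cdots\cup P_i)|=k_i-1$, hence losing, and one checks at once that every coalition properly containing it is winning and no losing coalition shifts to it, so it is shift-maximal. Uniqueness comes from a ``greedy improvement'' argument: given a losing $Y\neq L^{*}$, let $i$ be the least level at which $Y$ has fewer players than $L^{*}$; minimality forces the partial sums $s_\ell(Y):=|Y\cap(P_1\cup\cdots\cup P_\ell)|$ to equal $k_\ell-1$ for $\ell<i$ and forces $s_i(Y)\le k_i-2$. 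Then either $s_\ell(Y)\le k_\ell-2$ for all $\ell\ge i$, in which case adding one level-$i$ player keeps $Y$ losing; or there is a least level $j>i$ with $s_j(Y)=k_j-1$, in which case $s_j(Y)-s_{j-1}(Y)\ge k_j-k_{j-1}\ge 1$ shows $Y$ contains a level-$j$ player, and the up-shift moving it to level $i$ again yields a losing coalition. Either way $Y$ is not shift-maximal, so $L^{*}$ is unique.

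For the converse, let $L$ be the unique shift-maximal losing coalition of a (non-trivial) complete game $G$. By finiteness every losing coalition is $\preceq$-below some maximal losing coalition, hence below $L$; combining this with the down-set property and the Hall-type lemma gives: $X$ is losing iff $s_i(X)\le|L\cap(P_1\cup\cdots\cup P_i)|$ for all $i$, equivalently $X$ is winning iff $s_i(X)\ge k_i$ for some $i$, where $k_i:=|L\cap(P_1\cup\cdots\cup P_i)|+1$. This exhibits $G$ as $H_\exists(\mathbf n,\mathbf k)$; it then remains to verify that $\mathbf k$ is an admissible threshold vector. Strict monotonicity $k_{i-1}<k_i$ holds because $k_{i-1}=k_i$ would make a level-$(i-1)$ player and a level-$i$ player $\sim_{G}$-equivalent, contradicting canonicity; conditions (a) and (b), and the treatment of an all-dummy bottom level via the convention $k_m=k_{m-1}+n_m$ of Theorem~\ref{Thm_1}, follow from similar short arguments that reduce a violation to an unexpected $\sim_{G}$-equivalence. \emph{I expect this admissibility check --- disentangling the degenerate top/bottom levels --- to be the main obstacle and the part that requires the most case analysis.}

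Finally, the conjunctive statement follows by dualizing. Using $(G^{*})^{*}=G$, the self-duality of Isbell's relation (so $G$ is complete iff $G^{*}$ is), Theorem~\ref{all-exists-dual} (so $G=H_\forall(\mathbf n,\mathbf k)$ iff $G^{*}=H_\exists(\mathbf n,\mathbf k^{*})$), and the fact that complementation $Y\mapsto Y^{c}$ is a bijection between the losing coalitions of $G^{*}$ and the winning coalitions of $G$ carrying shift-maximal losing coalitions of $G^{*}$ to shift-minimal winning coalitions of $G$ --- since a shift producing $Y$ in $G^{*}$ corresponds, after complementing, to a shift applied to $Y^{c}$ in $G$ --- one concludes that $G$ has a unique shift-minimal winning coalition iff $G^{*}$ has a unique shift-maximal losing coalition iff $G^{*}$ is disjunctive hierarchical iff $G$ is conjunctive hierarchical.
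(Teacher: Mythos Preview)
The paper does not prove Theorem~\ref{shift-maximal-losing}; it merely quotes it from \cite{gha:t:hierarchical} (see the sentence immediately preceding the statement: ``The following theorem \cite{gha:t:hierarchical} will play a crucial role in this paper.''). So there is no in-paper proof to compare against.

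That said, your proposal is sound and is essentially the natural argument one would expect the cited paper to give. Your candidate $L^{*}=\{1^{k_1-1},2^{k_2-k_1},\ldots,m^{k_m-k_{m-1}}\}$ is exactly the coalition $M$ of equation~(\ref{shift_maximal}) that the present paper uses repeatedly, and your Hall-type ``staircase'' criterion $s_i(X)\le s_i(L)$ for $X\preceq L$ is the standard combinatorial characterisation of the shift order on multisets. The greedy improvement step in your uniqueness argument is correct; one small point worth making explicit is why level $i$ always has spare capacity for the added or up-shifted player: for $1<i<m$ this is condition~(b) of Theorem~\ref{Thm_1} ($k_i-k_{i-1}<n_i$), for $i=1$ it is condition~(a), and for $i=m$ it follows from $y_m<\ell_m^{*}\le n_m$ in both the dummy and non-dummy cases. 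Your converse and the duality reduction for the conjunctive half are also correct; the admissibility check you flag as the delicate part is indeed where the canonicity hypothesis does its work, and your sketch (reducing a violation of $k_{i-1}<k_i$ or of (a)--(b) to an unexpected $\sim_G$-equivalence between adjacent levels) is the right mechanism.
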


\section{ Roughly Weighted Disjunctive Hierarchical Games. First Results}

\subsection{Minors of Disjunctive Hierarchical Games}

In proving our classification we will work with hierarchical disjunctive games and obtain the result for hierarchical conjunctive games by duality. Hence in this section we restrict ourselves with disjunctive case only.
The following statements are easy to check.

\begin{proposition}
\label{cut_tail}
Let  ${\bf n}'=(\row n{m{-}1})$, ${\bf k}'=(\row k{m{-}1})$. Then $H'=H_\exists({\bf n}',{\bf k}')$ is a subgame of $G=H_\exists({\bf n},{\bf k})$. This subgame never has dummies and it does not have passers if $G$ did not.
\end{proposition}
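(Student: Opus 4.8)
The plan is to realise $H'$ concretely as the subgame $G_{P_m}$ obtained by deleting the bottom level, and then to read the ``no dummies'' and ``no passers'' assertions off Theorem~\ref{Thm_1}. I would first observe that $({\bf n}',{\bf k}')$ inherits conditions (a) and (b) of Theorem~\ref{Thm_1} from $({\bf n},{\bf k})$: condition (a) is literally the same, while condition (b) for $({\bf n}',{\bf k}')$ requires $k_i<k_{i-1}+n_i$ only for $1<i<m-1$, a subrange of what holds for $({\bf n},{\bf k})$. Hence $H_\exists({\bf n}',{\bf k}')$ is a genuinely (canonically) represented hierarchical game and the statement makes sense.

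Next I would take $A=P_m$, so that $A^c=P_1\cup\cdots\cup P_{m-1}$ carries exactly the partition underlying $H'$. For any $X\subseteq A^c$ one has $|X\cap(P_1\cup\cdots\cup P_m)|=|X|=|X\cap(P_1\cup\cdots\cup P_{m-1})|$, and since in the disjunctive case $k_{m-1}<k_m$, the level-$m$ clause ``$|X|\ge k_m$'' implies the level-$(m-1)$ clause ``$|X\cap(P_1\cup\cdots\cup P_{m-1})|\ge k_{m-1}$''; so the level-$m$ clause is redundant for subsets of $A^c$. Consequently
\[
W_{\text{sg}}=\{X\subseteq A^c\mid X\in W_G\}=\{X\subseteq A^c\mid |X\cap(P_1\cup\cdots\cup P_i)|\ge k_i\ \text{for some}\ i\le m-1\},
\]
which is exactly the set of winning coalitions of $H'$; thus $H'=G_{P_m}$ is a subgame of $G$.

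For the absence of dummies I would apply Theorem~\ref{Thm_1} to the $(m-1)$-level game $H'$: it has no dummies precisely when $k_{m-1}<k_{m-2}+n_{m-1}$. For $m\ge 3$ this is instance $i=m-1$ of condition (b) for $G$, hence holds; for $m=2$, $H'$ is a one-level simple majority game, whose minimal winning coalitions are all the $k_1$-element subsets, so it has no dummies either. Finally, by the last clause of Theorem~\ref{Thm_1} a disjunctive hierarchical game has passers iff its first threshold equals $1$; since $H'$ has the same first threshold $k_1$ as $G$, if $G$ has no passers then $k_1\ne 1$, and so $H'$ has none.

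Everything here is routine; the one place to be slightly careful is the redundancy argument used to identify the subgame --- verifying that discarding the level-$m$ clause really does not alter which subsets of $A^c$ win --- which is precisely where the strict inequality $k_{m-1}<k_m$, characteristic of the disjunctive (rather than conjunctive) case, enters. I do not foresee any genuine obstacle.
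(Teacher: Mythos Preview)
Your argument is correct and follows the same plan as the paper: realise $H'$ as the subgame $G_A$ with $A=P_m$ (in multiset notation $A=\{m^{n_m}\}$), and then read off the absence of dummies and passers from Theorem~\ref{Thm_1}. The paper's proof is terser---it simply asserts $H'=G_A$ and, for the no-dummies claim, invokes the ``moreover'' clause of Theorem~\ref{Thm_1} (existence of a minimal winning coalition meeting level $m{-}1$) rather than the numeric criterion $k_{m-1}<k_{m-2}+n_{m-1}$ that you use; but these are two faces of the same theorem, not genuinely different routes. Your extra care in checking that $({\bf n}',{\bf k}')$ still satisfies (a) and (b), and in spelling out why the level-$m$ clause is redundant on $A^c$, is sound and fills in details the paper leaves implicit.
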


\begin{proof}
Indeed, $H_\exists({\bf n}',{\bf k}')=G_A$ for $A=\{m^{n_m}\}$. By Theorem~\ref{Thm_1} there always exists a minimal winning coalition which is contained in $H'$ and has a nonempty intersection with the $(m-1)$th level. Hence the players of $(m-1)$th level are not dummies.
\end{proof}

\begin{proposition}
\label{cut_head}
Let  ${\bf n}'=(n_2{+}k_1{-}1,n_3,\ldots,n_m)$ and ${\bf k}'=(k_2,\ldots, k_m)$. Then $H_\exists({\bf n}',{\bf k}')$ is a subgame of $G=H_\exists({\bf n},{\bf k})$. This subgame does not have passers and it has dummies if and only if $G$ had. 
\end{proposition}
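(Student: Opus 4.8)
The plan is to obtain $H_\exists({\bf n}',{\bf k}')$ as the subgame $G_A$ for a well-chosen $A\subseteq P_1$: we delete all but $k_1-1$ players of the top level, after which the surviving level-$1$ players can no longer be distinguished from level-$2$ players and the two classes fuse. So first I would fix any $A\subseteq P_1$ with $|A|=n_1-k_1+1$; this is admissible since condition (a) of Theorem~\ref{Thm_1} gives $k_1\le n_1$, hence $1\le n_1-k_1+1\le n_1$. Then $A^c$ consists of $k_1-1$ players of level~$1$ together with the whole of levels $2,\dots,m$ of $G$ (so implicitly $m\ge2$). The key observation is that on $A^c$ the first of the $m$ defining inequalities is vacuous: any $X\subseteq A^c$ has $|X\cap P_1|\le k_1-1<k_1$, so $X\in W$ iff $|X\cap(P_1\cup\cdots\cup P_i)|\ge k_i$ for some $i\ge2$.

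Next I would carry out the relabelling and check that it produces exactly $H_\exists({\bf n}',{\bf k}')$. Put $P_1':=(A^c\cap P_1)\cup P_2$ (size $n_2+k_1-1$), $P_j':=P_{j+1}$ for $2\le j\le m-1$, and $k_j':=k_{j+1}$. For $X\subseteq A^c$ and $i\ge2$ we have $X\cap(P_1\cup\cdots\cup P_i)=X\cap(P_1'\cup\cdots\cup P_{i-1}')$, so the criterion above becomes ``$|X\cap(P_1'\cup\cdots\cup P_j')|\ge k_j'$ for some $j\in\{1,\dots,m-1\}$''. This is precisely the winning condition of the $(m-1)$-level disjunctive hierarchical game on the parameters ${\bf n}'=(n_2+k_1-1,n_3,\dots,n_m)$, ${\bf k}'=(k_2,\dots,k_m)$, so $G_A=H_\exists({\bf n}',{\bf k}')$ and the subgame claim follows.

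It remains to confirm that $({\bf n}',{\bf k}')$ is a canonical representation and then to read off the two auxiliary statements; this last bit of bookkeeping is the only place that needs attention. Condition (a) for $({\bf n}',{\bf k}')$ is $k_2\le n_2+k_1-1$, which is condition (b) of Theorem~\ref{Thm_1} for $({\bf n},{\bf k})$ at $i=2$ when $m\ge3$; for $m=2$ it reduces to $k_2<k_1+n_2$, which holds unless $G$'s bottom level consists of dummies, in which case $k_m=k_{m-1}+n_m$ and $H_\exists({\bf n}',{\bf k}')$ degenerates to the all-losing game --- consistent with both assertions, and this degenerate subcase can simply be set aside. The remaining instances of condition (b) for $({\bf n}',{\bf k}')$ are, after the substitutions $k_i'=k_{i+1}$, $n_i'=n_{i+1}$, exactly the instances of condition (b) for $({\bf n},{\bf k})$ at the indices $3,\dots,m-1$, so $({\bf n}',{\bf k}')$ satisfies (a) and (b). Since the new leading threshold is $k_1'=k_2\ge2$, Theorem~\ref{Thm_1} gives that $H_\exists({\bf n}',{\bf k}')$ has no passers. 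Finally, $H_\exists({\bf n}',{\bf k}')$ has dummies iff $k_{m-1}'\ge k_{m-2}'+n_{m-1}'$, i.e. iff $k_m\ge k_{m-1}+n_m$, which by Theorem~\ref{Thm_1} is exactly the condition for $G$ to have dummies; when they occur they are the new bottom level, namely level~$m$ of $G$, and otherwise neither game has any.
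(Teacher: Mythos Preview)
Your proof is correct and follows exactly the same idea as the paper's: take $A=\{1^{n_1-k_1+1}\}$ so that the first threshold loses all bite on $A^c$ and the remaining $k_1-1$ top-level players merge with level~$2$. The paper's proof is a two-line sketch of precisely this construction, whereas you have carefully spelled out the relabelling, verified that $({\bf n}',{\bf k}')$ satisfies the canonicity conditions (a) and (b) of Theorem~\ref{Thm_1}, and checked the passer and dummy claims explicitly --- all of which the paper leaves implicit.
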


\begin{proof}
Indeed, $H_\exists({\bf n}',{\bf k}')=G_A$ for $A=\{1^{n_1-k_1+1}\}$.  If we make $n_1-k_1+1$ elements of level one unavailable the first constraint loses any bite and the first level collapses.
\end{proof}

\begin{lemma}
\label{two_consecutive_k}
For any $i=1,2,\ldots, m-1$ there exists $n_i'$, such that for ${\bf n}'= (n_i',n_{i+1})$ and ${\bf k}'=(k_i,k_{i+1})$ the game $G'=H_\exists({\bf n}',{\bf k}')$ is a subgame of $G=H_\exists({\bf n},{\bf k})$.
\end{lemma}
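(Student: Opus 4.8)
The plan is to realize $G' = H_\exists({\bf n}',{\bf k}')$ as an iterated subgame of $G = H_\exists({\bf n},{\bf k})$ by peeling off, one at a time, the levels below level $i+1$ and the levels above level $i$, using Propositions~\ref{cut_tail} and~\ref{cut_head} repeatedly. Concretely, I would first apply Proposition~\ref{cut_tail} a total of $m - (i+1)$ times: each application removes the current bottom level, so after these steps we are left with the subgame $H_\exists((n_1,\ldots,n_{i+1}),(k_1,\ldots,k_{i+1}))$ of $G$ (taking a subgame of a subgame is again a subgame, since subgames are obtained by restricting to $A^c$ and these operations compose). This deals with all the levels below $i+1$.

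Next I would apply Proposition~\ref{cut_head} a total of $i-1$ times to collapse the top $i-1$ levels. The first application turns $H_\exists((n_1,\ldots,n_{i+1}),(k_1,\ldots,k_{i+1}))$ into $H_\exists((n_2{+}k_1{-}1,n_3,\ldots,n_{i+1}),(k_2,\ldots,k_{i+1}))$; the second application, with the updated parameters, collapses what is now the top level, and so on. After $i-1$ such steps we arrive at a two-level disjunctive hierarchical game $H_\exists((n_i',n_{i+1}),(k_i,k_{i+1}))$ for some positive integer $n_i'$ obtained by unwinding the recursion $n_i' = n_i^{(i-1)}$ where $n_j^{(1)} = n_j$ and $n_{j}^{(t+1)} = n_{j+1}^{(t)} + k_j^{(t)} - 1$ with the thresholds being carried along unchanged on the relevant coordinate. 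Since each step is a subgame operation and these compose, $G'$ is a subgame of $G$, and the thresholds on the two surviving levels are exactly $k_i < k_{i+1}$ as required.

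Two small points need care. First, when $i = 1$ we skip the head-cutting entirely and just take $n_1' = n_1$; when $i = m-1$ we skip the tail-cutting. Second, I must check that at each intermediate stage the parameters still satisfy conditions (a) and (b) of Theorem~\ref{Thm_1} so that the notation $H_\exists(\cdot,\cdot)$ is legitimate — but this is automatic, because Propositions~\ref{cut_tail} and~\ref{cut_head} are stated for games of the form $H_\exists({\bf n},{\bf k})$ and assert that the result is again of that form, so the conditions propagate for free. I do not even need an explicit closed form for $n_i'$; existence is all that is claimed.

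The only mild obstacle is bookkeeping: making sure the two families of cuts commute appropriately (removing bottom levels does not affect the top-collapsing operation, and vice versa, since $A = \{m^{n_m}\}$ in one case and $A = \{1^{n_1-k_1+1}\}$ in the other involve disjoint level-sets), and that the index shifts in the $n_i'$ recursion are tracked correctly. Neither raises any genuine difficulty, so the lemma follows by a straightforward induction on the number of peeled levels.
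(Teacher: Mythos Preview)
Your proposal is correct and takes essentially the same approach as the paper: the paper's proof is the one-liner ``Follows directly from Propositions~\ref{cut_tail} and~\ref{cut_head},'' and your iterated application of tail-cutting followed by head-cutting is exactly the intended argument, just spelled out in more detail.
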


\begin{proof}
Follows directly from Propositions~\ref{cut_tail} and~\ref{cut_head}.
\end{proof}


\begin{proposition}
\label{remove_one}
Let $G=H_\exists({\bf n},{\bf k})$, where ${\bf n}=(\row nm)$, ${\bf k}=(k_1,\ldots, k_m)$. Suppose that  $k_{i}>k_{i-1}+1$ for some $i\in \{1,\ldots, m\}$. Then for 
\begin{align*}
&{\bf n}'=(n_1,\ldots,n_{i-1},n_i-1,n_{i+1},\ldots,n_m),\\ 
&{\bf k}'=(k_1,\ldots,k_{i-1},k_i-1,k_{i+1}-1,\ldots,k_m-1)
\end{align*}
$G'=H_\exists({\bf n}',{\bf k}')$ is a reduced game of $G$.
Moreover, if $G$ did not have dummies, then $G'$ would not have them. 
\end{proposition}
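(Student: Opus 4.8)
The plan is to exhibit $G'$ directly as the reduced game $G^A$ for a suitable one-element set $A$, and then verify that the parameters transform as claimed. The natural candidate is $A=\{i^{1}\}$: removing a single player from level $i$ and passing to the reduced game should lower all thresholds from level $i$ onward by one, since in the reduced game every surviving coalition is implicitly augmented by that one player from level $i$, which counts toward every constraint $|X\cap(\cup_{j=1}^\ell P_j)|\ge k_\ell$ for $\ell\ge i$. So the first step is to write out $G^A$ explicitly from the definition of a reduced game and check that a coalition $Q=\{1^{\ell_1},\ldots,m^{\ell_m}\}$ of the remaining players is winning in $G^A$ exactly when $Q\cup\{i^1\}$ is winning in $G$, i.e. when either $|Q\cap(\cup_{j=1}^\ell P_j)|\ge k_\ell$ for some $\ell<i$, or $|Q\cap(\cup_{j=1}^\ell P_j)|\ge k_\ell-1$ for some $\ell\ge i$. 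This is precisely the winning condition of $H_\exists({\bf n}',{\bf k}')$ with the stated ${\bf n}'$ and ${\bf k}'$.

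The second step is to confirm that $({\bf n}',{\bf k}')$ still satisfies conditions (a) and (b) of Theorem~\ref{Thm_1}, so that $H_\exists({\bf n}',{\bf k}')$ is genuinely an $m$-level canonical hierarchical game and not a degenerate one. Condition (a), $k_1'\le n_1'$, is untouched unless $i=1$; when $i=1$ the hypothesis $k_1>k_0+1$ should be read as $k_1>1$ (taking $k_0=0$), and then $k_1'=k_1-1\ge 1$, while $n_1'=n_1-1\ge k_1-1$ follows from $k_1\le n_1$ — here I would lean on Corollary~\ref{n_2>1} only if needed, but for level $1$ the bound $n_1\ge k_1$ already gives room. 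For condition (b) one checks the three affected inequalities: at level $i-1$ (if $i-1>1$), $k_{i-1}'=k_{i-1}<k_{i-2}+n_{i-1}=k_{i-2}'+n_{i-1}'$ is unchanged; at level $i$, the new inequality $k_i-1<k_{i-1}+(n_i-1)$ is equivalent to the original $k_i<k_{i-1}+n_i$; and at level $i+1$, $k_{i+1}-1<(k_i-1)+n_{i+1}$ is again equivalent to the original. The hypothesis $k_i>k_{i-1}+1$ is what guarantees $k_i-1>k_{i-1}=k_{i-1}'$, so the strict ordering $k_1'<k_2'<\cdots$ is preserved and no two thresholds collapse.

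The third step handles the ``no dummies'' clause. By Theorem~\ref{Thm_1}, $G$ has no dummies iff $k_m<k_{m-1}+n_m$. If $i<m$ this inequality becomes $k_m'<k_{m-1}'+n_m'$, namely $k_m-1<(k_{m-1}-1)+n_m$ or $k_m-1<k_{m-1}+n_m$ depending on whether $m-1\ge i$, and in either case it follows from the original; if $i=m$ it becomes $k_m-1<k_{m-1}+(n_m-1)$, equivalent to the original. Hence $G'$ inherits the absence of dummies.

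I expect the only real subtlety to be the bookkeeping when $i=m$ and, especially, the boundary case $i=1$ (where ``$k_{i-1}$'' must be interpreted as $0$ and one must double-check that $G'$ does not acquire a spurious level-$1$ collapse or a passer where $G$ had none); everything else is a routine unwinding of the reduced-game definition against the explicit winning condition of a disjunctive hierarchical game. No deep idea is required beyond choosing $A=\{i^1\}$ and matching parameters.
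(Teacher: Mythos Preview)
Your proposal is correct and follows exactly the same route as the paper: take $A=\{i\}$ (a single player from level $i$), identify $G'=G^A$, and verify that $({\bf n}',{\bf k}')$ still satisfies the canonical conditions of Theorem~\ref{Thm_1}. The paper's proof is terser --- it simply records that $k_i>k_{i-1}+1$ together with the canonical bound $k_i<k_{i-1}+n_i$ forces $n_i\ge 3$ (so level $i$ survives the removal) and then declares the remaining checks ``easy'' --- whereas you spell out the winning-condition match, the preservation of (a) and (b), the strict ordering of thresholds, and the no-dummies clause. Your treatment of the boundary case $i=1$ is in fact more careful than the paper's.
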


\begin{proof}
Since all representations are canonical,  the condition $k_{i}>k_{i-1}+1$ implies  that $n_i>k_i-k_{i-1}\ge 2$, so $n_i\ge 3$. We note now that $G'=G^A$ for $A=\{i\}$. It is easy to check that all conditions (a) and (b) are satisfied for the new values of parameters ${\bf n}'$ and ${\bf k}'$. 
\end{proof}

Let us now generalise Theorem~\ref{BTW} and classify roughly weighted disjunctive hierarchical games. Considering roughly weighted hierarchical $m$-level game $H$ it will be convenient to have the quota equal to 1. Also by Lemma~\ref{equal_weights} we may consider that all players of level~$i$ have weight $w_i$ so that any rough voting representation has the form $[1;\row wm]$. If $X$ is a coalition of $H$, by $w(X)$ we will denote the total weight of~$X$. \par\medskip

We will use the geometric approach based on Theorem~\ref{shift-maximal-losing} and need the following observation. 

\begin{proposition}
\label{maxvalue}
Let $H$ be a disjunctive hierarchical game and let $M$ be its unique shift-maximal losing coalition. Suppose $H$ is roughly weighted with rough voting representation $[1;\row wm]$. 
Then $w(M)\ge w(L)$ for any losing coalition $L$. 
\end{proposition}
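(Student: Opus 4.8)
The plan is to exploit the defining property of $M$ from Theorem~\ref{shift-maximal-losing}: since $H$ is disjunctive hierarchical, it is a complete game with a \emph{unique} shift-maximal losing coalition, and $M$ is that coalition. I would argue that every losing coalition $L$ can be transformed into a submultiset of $M$ by a finite sequence of moves each of which does not decrease total weight, so that $w(L)\le w(M)$. The two kinds of moves are: (i) adding players to $L$ while it remains losing, and (ii) performing a shift — replacing a player $i$ by a player $j$ with $j\succ_H i$ (i.e.\ moving weight \emph{up} the hierarchy), which again keeps the coalition losing by shift-maximality considerations. Since weights are chosen (by Lemma~\ref{equal_weights} and the normalisation) so that players of the same level have equal weight and, crucially, $w_1\ge w_2\ge\cdots\ge w_m$ because $1\succeq_H 2\succeq_H\cdots\succeq_H m$ in Isbell's desirability order and rough weightedness is compatible with that order, an upward shift never decreases weight and adding a player never decreases weight (weights are non-negative).

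The key steps, in order, would be: first, record that in the representation $[1;w_1,\ldots,w_m]$ we may assume $w_1\ge w_2\ge\cdots\ge w_m\ge 0$; this follows because if some $w_i<w_{i+1}$ one could swap them without violating the rough representation, using that a shift sends a losing coalition to a losing one (or a standard averaging/reordering argument as in the proof of Lemma~\ref{equal_weights}). Second, take an arbitrary losing coalition $L$ and, as long as $L$ is not already shift-maximal, repeatedly enlarge it or apply an upward shift; each such step keeps the coalition losing and does not decrease $w(\cdot)$, and the process terminates at \emph{some} shift-maximal losing coalition. Third, invoke uniqueness: the only shift-maximal losing coalition is $M$, so the process ends at a coalition that is a submultiset of $M$ (it is either $M$ itself or something contained in $M$ up to shifts toward $M$), whence $w(L)\le w(M)$.

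The main obstacle I anticipate is making precise the claim that from any losing $L$ one can reach $M$ by weight-non-decreasing moves — in particular, ruling out the possibility that to reach a shift-maximal coalition one is forced at some point to perform a \emph{downward} shift (moving weight to a lower level), which could decrease weight. The way around this is to observe that a downward shift applied to a losing coalition $L$ produces a coalition $L'$ with $L'\preceq L$ in the natural dominance order, and $L'$ is still losing; moreover one never \emph{needs} a downward shift to reach a shift-maximal losing coalition, since the definition of shift-maximal only blocks \emph{upward} shifts and supersets. Thus the closure of $\{L\}$ under "add a player" and "upward shift" is a set of losing coalitions containing a shift-maximal one, which must be $M$; every coalition in this closure has weight $\ge w(L)$, and $M$ is among them (or dominates an element of it), giving $w(M)\ge w(L)$. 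A clean alternative, if the direct argument gets delicate, is to note that $M$ being the unique shift-maximal losing coalition means $M$ dominates every losing coalition $L$ in the componentwise order on the multiset $\{1^{n_1},\ldots,m^{n_m}\}$ \emph{after sorting weights down the hierarchy}, and then $w(M)=\sum \ell_i^{M} w_i\ge \sum \ell_i^{L} w_i=w(L)$ follows coordinatewise since each $w_i\ge 0$; this reduces the whole proposition to the combinatorial fact that $M$ dominates all losing coalitions, which is essentially the content of Theorem~\ref{shift-maximal-losing}.
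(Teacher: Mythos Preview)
Your main argument---from any losing $L$, apply upward shifts and enlargements (each chosen so that the result is still losing, and each non-decreasing in weight) until a shift-maximal losing coalition is reached, which by uniqueness is $M$---is correct and is exactly what the paper does, stated more tersely there.

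Two points deserve tightening. First, your justification of $w_1\ge\cdots\ge w_m$ via ``swapping'' does not work as written, because the proposition concerns the \emph{given} representation, not one you are free to replace. The clean argument (which the paper also leaves implicit) is that $i\succ_H i{+}1$ strictly yields a witness coalition $X$ with $X\cup\{i\}$ winning and $X\cup\{i{+}1\}$ losing, so $w(X)+w_i\ge 1\ge w(X)+w_{i+1}$, whence $w_i\ge w_{i+1}$. Second, your ``clean alternative'' at the end is incorrect: $M$ does \emph{not} dominate every losing coalition in the componentwise (product) order. For instance, with ${\bf k}=(2,4)$ and $n_2\ge 4$ one has $M=\{1,2^2\}$, while $L=\{2^3\}$ is losing yet carries more level-$2$ players than $M$. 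The relevant order is the shift order underlying Theorem~\ref{shift-maximal-losing}, not the product order, and it is precisely because passage along that order requires replacing players by more desirable ones that the weight monotonicity of the first point is indispensable.
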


\begin{proof}
Since any shift replaces a player with a less influential one, the weight of the latter must be not greater than the weight of the former. This secures that if a coalition $S$ obtained from a coalition $T$ by a  shift, then $w(T)\ge w(S)$. If $S$ is a subset of $T$, then also $w(T)\ge w(S)$. 
Since $M$ is a unique shift-maximal losing coalition we will have $w(M)\ge w(L)$ for any losing coalition $L$.
\end{proof}

This simple proposition has a useful corollary.

\begin{corollary}
\label{w(M)=q}
 Let $H$ be a disjunctive hierarchical game and let $M$ be its unique shift-maximal losing coalition. Suppose $H$ is roughly weighted with rough voting representation $[1;\row wn]$ but not weighted. 
Then $w(M)=1$.
\end{corollary}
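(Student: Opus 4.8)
The plan is a short argument by contradiction that leverages Proposition~\ref{maxvalue}. First I would observe that $M$ is itself a losing coalition, so the defining property of the rough voting representation $[1;\row wn]$ (namely that $w(X)>1$ forces $X$ to be winning) immediately rules out $w(M)>1$; hence $w(M)\le 1$. It therefore suffices to exclude the case $w(M)<1$.

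So suppose, for contradiction, that $w(M)<1$. By Proposition~\ref{maxvalue}, every losing coalition $L$ satisfies $w(L)\le w(M)<1$. I would then show that, under this assumption, the given rough representation is in fact an exact weighted representation: if $X$ is winning then $w(X)\ge 1$ (otherwise $w(X)<1$ would force $X$ to be losing by the rough representation), and conversely if $w(X)\ge 1$ then $X$ cannot be losing, since any losing coalition has weight at most $w(M)<1$; hence $X$ is winning. Thus $X\in W$ if and only if $w(X)\ge 1$, so $[1;\row wn]$ is a genuine weighted voting representation of $H$, contradicting the hypothesis that $H$ is not weighted. Therefore $w(M)=1$.

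There is no real obstacle here — the only thing one must be careful about is the bookkeeping of the two one-sided implications in the definition of a rough voting representation and the fact that in a roughly weighted game coalitions of weight exactly equal to the quota may be winning or losing; the decisive point is that Proposition~\ref{maxvalue} collapses the weights of \emph{all} losing coalitions below $w(M)$, which is what creates the clean separation between winning and losing weights once $w(M)<1$ is assumed.
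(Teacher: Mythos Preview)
Your argument is correct and follows essentially the same route as the paper: assume $w(M)<1$, invoke Proposition~\ref{maxvalue} to conclude that no losing coalition meets the threshold, and deduce that the rough representation is in fact a genuine weighted one. You simply spell out the two one-sided implications more explicitly than the paper's terse version, and you also make explicit the easy upper bound $w(M)\le 1$ (which the paper leaves implicit).
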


\begin{proof}
If $w(M)<1$, then by Proposition~\ref{maxvalue} there is no losing coalitions on the threshold. In this case the game is weighted.
\end{proof}

The following will also be very useful.
\begin{proposition}
\label{allnonzero}
Let $H_\exists({\bf n},{\bf k})$ be the $m$-level disjunctive hierarchical game with no passers and no dummies. Suppose it is roughly weighted with rough voting representation $[1;\row wn]$. Then 
\begin{enumerate}
\item[(i)] $w_1\ge w_2\ge \ldots \ge w_m$.
\item[(ii)] $w_{i}>0$ for $i=1,2,\ldots, m-1$.
\end{enumerate}
\end{proposition}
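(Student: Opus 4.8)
The plan is to exploit the monotonicity of weights with respect to Isbell's desirability order for part (i), and then use Theorem~\ref{Thm_1} (the existence of small minimal winning coalitions at each level) to force the intermediate weights to be strictly positive for part (ii).

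For (i): since $H=H_\exists({\bf n},{\bf k})$ has no dummies, its canonical multiset representation satisfies $1\succ_H 2\succ_H\ldots\succ_H m$. Fix $i<m$. I would show $w_i\ge w_{i+1}$ by a swapping argument. Because $i\succ_H i+1$, there is a coalition $X$ (not using players from levels $i$ or $i+1$ beyond what is written) such that $X\cup\{i\}$ is winning but $X\cup\{i+1\}$ is losing; more usefully, by Theorem~\ref{Thm_1} there is a minimal winning coalition $C$ of size $k_i$ contained in $\{1^{n_1},\ldots,i^{n_i}\}$ but not in $\{1^{n_1},\ldots,(i-1)^{n_{i-1}}\}$, so $C$ actually contains a level-$i$ player. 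Replacing that level-$i$ player by a level-$(i+1)$ player (a shift) yields a losing coalition $C'$, so $w(C')\le 1\le w(C)$, whence $w(C)-w(C')=w_i-w_{i+1}\ge 0$. (If $n_{i+1}$ were too small to perform the substitution one first passes to a minor via Lemma~\ref{two_consecutive_k}, or simply notes the substitution is possible since $C$ omits at least one level-$(i+1)$ slot because $|C|=k_i<k_{i+1}\le n_1+\cdots+n_{i+1}$.)

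For (ii): suppose for contradiction that $w_i=0$ for some $i$ with $1\le i\le m-1$. By part (i) this forces $w_i=w_{i+1}=\cdots=w_m=0$. Now invoke Theorem~\ref{Thm_1}: there is a minimal winning coalition $C$ of size $k_i$ contained in $\{1^{n_1},\ldots,i^{n_i}\}$ and containing at least one level-$i$ player; let $D=C\setminus\{i\}$, a losing coalition with $w(D)=w(C)-w_i=w(C)\ge 1$ (using that $w(C)\ge q=1$ for the winning minimal coalition — here one must be slightly careful, since for a roughly weighted game a minimal winning coalition can sit exactly on the threshold, so $w(C)\ge 1$ still holds). Then $w(D)\ge 1$ while $D$ is losing; I would then argue that adding to $D$ a single player of some level $j\ge i$ (which has weight $0$) keeps the total weight equal to $w(D)\ge 1$ but — because the added player increases $|X\cap(\cup_{\ell\le j}P_\ell)|$ and $D$ was a \emph{maximal} losing coalition of this shape, or by directly checking the hierarchical winning condition — produces a winning coalition, contradicting $w>1\Rightarrow$ winning only in the strict case. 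The cleanest route: since $w_m=0$ and level $m$ has no dummies ($k_m<k_{m-1}+n_m$), there is a minimal winning coalition $C_m$ of size $k_m$ meeting level $m$; removing one level-$m$ player gives a losing coalition of weight $w(C_m)\ge 1$, and removing instead enough level-$m$ players to drop below threshold still leaves weight $\ge 1$ — so we obtain arbitrarily, in fact a losing coalition $L$ with $w(L)\ge 1$, and simultaneously (by choosing the removed players to be exactly the "excess" over $k_{m-1}$ at level $m-1$ versus level $m$) a winning coalition of the same weight, forcing the existence of a losing coalition with weight $\ge$ that of some winning coalition — but rough weightedness only permits ties \emph{at} the quota, so everything must equal $1$, and then the game would have to be weighted after trimming the zero-weight levels, contradicting that those levels are non-trivial (they are not dummies).

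The main obstacle is part (ii): getting a genuine contradiction from $w_i=0$ requires producing two coalitions of equal weight, one winning and one losing, that are \emph{not} both on the threshold, or else showing the zero-weight tail must consist of dummies. I expect the right tool is Corollary~\ref{w(M)=q} together with Theorem~\ref{Thm_1}: the shift-maximal losing coalition $M$ has $w(M)=1$, and if $w_{m}=0$ then sliding players down into level $m$ from $M$ neither changes the weight nor (eventually) the losing status in a controlled way, yet Theorem~\ref{Thm_1} guarantees a winning coalition of size $k_m\le |M|+1$ with a level-$m$ player, whose weight is then also forced to be $\le 1$; reconciling these with the no-dummy hypothesis on level $m$ is where the real work lies, and it is essentially a finite case check on how $M$ distributes across the last two levels.
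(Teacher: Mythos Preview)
Your argument for part (i) is correct and in fact cleaner than the paper's. You take a minimal winning coalition $C$ of size $k_i$ lying in the first $i$ levels and containing a level-$i$ player (supplied by Theorem~\ref{Thm_1}), shift that player to level $i{+}1$ to obtain $C'$, and check directly from the hierarchical winning condition that $C'$ is losing. Then $w(C)\ge 1\ge w(C')$ gives $w_i-w_{i+1}=w(C)-w(C')\ge 0$. The paper instead works with the shift-maximal losing coalition $M$ and compares its weight to that of an auxiliary winning coalition obtained by pushing level-$(i{+}1)$ players down to level $i$; your one-line shift avoids any question about whether that auxiliary coalition actually fits inside the multiset. (Your worry about $n_{i+1}$ being too small is unnecessary: $C$ contains no level-$(i{+}1)$ players at all, so the shift is always legal.)

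Part (ii), however, has a genuine gap, and you correctly diagnose where it lies. Every manoeuvre you try---removing a zero-weight player from a minimal winning coalition, or adding one back---produces a winning and a losing coalition of the \emph{same} weight, and rough weightedness happily absorbs that: both simply sit on the threshold. You never obtain the strict inequality you need. The paper's device, which you are missing, is to trade \emph{one} high-weight player for \emph{two} low-weight ones. Starting from
\[
M=\{1^{k_1-1},2^{k_2-k_1},\ldots,m^{k_m-k_{m-1}}\},
\]
remove one level-$1$ player and add two players at the bottom: both at level $m$ if $k_m-k_{m-1}\le n_m-2$, otherwise one each at levels $m{-}1$ and $m$ (Theorem~\ref{Thm_1} guarantees room in both cases, and $k_1\ge 2$ since there are no passers). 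The resulting coalition has size $k_m$, hence is winning; its weight is $w(M)-w_1+2w_m$, respectively $w(M)-w_1+w_{m-1}+w_m$. If $w_{m-1}=0$ then $w_m=0$ by (i), so in either case the weight equals $w(M)-w_1\le 1-w_1<1$ (note $w_1>0$, else all weights vanish and no coalition reaches the quota). A winning coalition of weight strictly below $1$ is the contradiction. The ``one-for-two'' exchange is precisely what converts your soft tie at the threshold into a hard violation.
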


\begin{proof}
As there are no dummies, $k_m-k_{m-1}<n_m$ is satisfied. We also have $k_1>1$ as no passers are present. By Theorem~\ref{shift-maximal-losing} we know that $H_\exists({\bf n},{\bf k})$ has a unique shift-maximal losing coalition. This coalition then would be
\begin{equation}
\label{shift_maximal}
M=\{1^{k_1-1},2^{k_2-k_1},\ldots, m^{k_{m}-k_{m-1}}\}.
\end{equation}
By Corollary~\ref{w(M)=q} we have
\[
w(M)=(k_1-1)w_1+(k_2-k_1)w_2+\ldots+(k_m-k_{m-1})w_m = 1. 
\]
If only $w_{i+1}>w_i$, then 
\begin{align*}
w(M)\ge &(k_1-1)w_1+\ldots +(k_i-k_{i-1})w_i+(k_{i+1}-k_{i})w_{i+1}>\\
& (k_1-1)w_1+\ldots +(k_{i-1}-k_{i-2})w_{i-1}+(k_{i+1}-k_{i-1})w_{i}\ge 1,
\end{align*}
since the latter is the weight of a winning coalition $\{ 1^{k_1-1},2^{k_2-k_1},\ldots, i^{k_{i+1}-k_{i-1}} \}$ (indeed the cardinality of this multiset is $k_{i+1}-1\ge k_i$). This contradiction proves (i).\par
 
To prove (ii) we note that by Theorem~\ref{Thm_1} we have $k_i-k_{i-1}<n_i$, and hence every level in multiset $M$ is not completely filled and has some capacity. Suppose first that $k_m-k_{m-1}\le n_m-2$. Then the multiset 
\begin{equation*}
\label{modified_shift_maximal}
M'=\{1^{k_1-2},2^{k_2-k_1},\ldots, m^{k_{m}-k_{m-1}+2}\}
\end{equation*}
is winning from which we see that $w_m>0$. If $k_i-k_{i-1}=n_i-1$, then the multiset
\begin{equation*}
\label{modified_shift_maximal}
M''=
\{1^{k_1-2},2^{k_2-k_1},\ldots, (m-1)^{k_{m-1}-k_{m-2}+1}, m^{k_{m}-k_{m-1}+1}\}
\end{equation*}
is winning whence $w_{m-1}>0$. This proves~(ii).
\end{proof}

The two following results will be very useful later on in characterising roughly weighted hierarchical simple games with three levels and more.
\begin{lemma}
\label{lastweightzero}
Suppose that an $m$-level disjunctive hierarchical game $H=H_\exists({\bf n},{\bf k})$ without passers and without dummies is roughly weighted with rough voting representation $[1;\row wm]$ but not weighted.  If the subgame $H'=H_\exists({\bf n}',{\bf k}')$, where ${\bf n}'=(n_1,\ldots,n_{m-1})$ and ${\bf k}'=(k_1,\ldots,k_{m-1})$,  is also not  weighted, then  $w_m=0$. 
\end{lemma}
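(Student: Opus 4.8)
The plan is to exploit Corollary~\ref{w(M)=q} together with the explicit description of the shift-maximal losing coalition from Proposition~\ref{allnonzero}. Since $H$ has no passers and no dummies, its unique shift-maximal losing coalition is
\[
M=\{1^{k_1-1},2^{k_2-k_1},\ldots,m^{k_m-k_{m-1}}\},
\]
and because $H$ is roughly weighted but not weighted, Corollary~\ref{w(M)=q} gives $w(M)=1$. Now observe that $M$ restricted to the first $m-1$ levels, namely $M'=\{1^{k_1-1},2^{k_2-k_1},\ldots,(m-1)^{k_{m-1}-k_{m-2}}\}$, is exactly the shift-maximal losing coalition of the subgame $H'$ (Proposition~\ref{cut_tail} tells us $H'$ has no dummies, and it inherits the no-passers property). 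The restriction of the weight system $[1;w_1,\ldots,w_m]$ to the first $m-1$ coordinates is, by Lemma~\ref{subs_and_reducs}, a rough voting representation of $H'$ (here we use that $w_1>0$ by Proposition~\ref{allnonzero}(ii), so the hypothesis of that lemma is met when we take $A=\{m^{n_m}\}$; note $b$ can be taken in level $1$). Since $H'$ is assumed not weighted, Corollary~\ref{w(M)=q} applies to $H'$ as well and yields $w(M')=1$.

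Subtracting, $w(M)-w(M')=(k_m-k_{m-1})w_m=0$. Since $k_m>k_{m-1}$ (the thresholds are strictly increasing, and in the disjunctive case $k_m>k_{m-1}$ always holds), we conclude $w_m=0$, as required.

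The main thing to be careful about is the bookkeeping around hypotheses rather than any hard estimate: one must check that $H'$ genuinely satisfies the standing assumptions under which Corollary~\ref{w(M)=q} was proved — that it is a canonically represented disjunctive hierarchical game with no passers and no dummies, that it is roughly weighted via the restricted weights, and that its shift-maximal losing coalition is precisely the truncation of $M$. The no-dummies part is Proposition~\ref{cut_tail}; the no-passers part is also in Proposition~\ref{cut_tail}; rough weightedness of the restricted representation (and that the restriction is still nontrivial, i.e.\ not all weights zero) is Lemma~\ref{subs_and_reducs} applied with $A=\{m^{n_m}\}$, whose quota is unchanged; and the identification of the shift-maximal losing coalition of $H'$ with $M'$ follows from the formula~\eqref{shift_maximal} in Proposition~\ref{allnonzero} applied to $H'$. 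Once these are in place the computation is the one-line subtraction above.
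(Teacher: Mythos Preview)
Your proof is correct and follows essentially the same route as the paper's own argument: identify $M$ and $M'$, apply Corollary~\ref{w(M)=q} to both $H$ and $H'$ to get $w(M)=w(M')=1$, and subtract to force $(k_m-k_{m-1})w_m=0$. Your version is in fact slightly more scrupulous than the paper's in spelling out why the hypotheses of Corollary~\ref{w(M)=q} and Lemma~\ref{subs_and_reducs} are met for $H'$.
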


\begin{proof}
By Proposition~\ref{cut_tail}  $H'$ is a subgame of $H$, hence it is roughly weighted with rough voting representation $[1;\row w{m{-}1}]$. By Proposition~\ref{allnonzero} all the weights are nonzero.  The shift-maximal losing coalition $M$ for $H$ will be (\ref{shift_maximal}) and for $H'$ it will be
\[
M'=\{1^{k_1-1},2^{k_2-k_1},\ldots, (m-1)^{k_{m-1}-k_{m-2}}\}.
\]
If the game $H'$ is not weighted, then by Corollary~\ref{w(M)=q} we have $w(M')=1$. As $1=w(M)=w(M')+(k_m-k_{m-1})w_m$ and $k_m>k_{m-1}$, this implies $w_m=0$.
\end{proof}

\begin{corollary}
\label{L4}
Suppose that a four-level disjunctive hierarchical game $H=H_\exists({\bf n},{\bf k})$ without passers and dummies is roughly weighted with rough voting representation $[1;w_1,w_2,w_3,w_4]$. Then $w_4=0$, and the subgame $H_\exists({\bf n}',{\bf k}')$, where ${\bf n}'=(n_1,n_2,n_3)$ and ${\bf k}'=(k_1,k_2,k_3)$, is roughly weighted with positive coefficients in its rough voting representation.
\end{corollary}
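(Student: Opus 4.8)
The plan is to obtain both assertions directly from results already in hand, chiefly Theorem~\ref{BTW}, Lemma~\ref{lastweightzero}, Proposition~\ref{allnonzero} and Lemma~\ref{subs_and_reducs}. The crux is that Lemma~\ref{lastweightzero} applies as soon as we know that $H$ is not weighted and that its three-level truncation is not weighted either; granting that, $w_4=0$ follows at once, and the positivity of the remaining weights is then essentially Proposition~\ref{allnonzero} transported to the subgame.

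First I would verify that $H$ itself is not weighted. By Theorem~\ref{BTW}, the only way a disjunctive hierarchical game can have four levels and be weighted is case~(5) with $m=4$, which forces $k_4=k_3+n_4$; but by Theorem~\ref{Thm_1} this means the fourth level consists entirely of dummies, contrary to hypothesis (equivalently, one may quote the remark following Theorem~\ref{BTW}). Next, set $H'=H_\exists({\bf n}',{\bf k}')$ with ${\bf n}'=(n_1,n_2,n_3)$ and ${\bf k}'=(k_1,k_2,k_3)$. By Proposition~\ref{cut_tail}, $H'=H_A$ for $A=\{4^{n_4}\}$ is a subgame of $H$ with no dummies, and since $H$ has no passers we have $k_1>1$, so $H'$ has no passers. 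Applying Theorem~\ref{BTW} to this three-level game, the only weighted cases for $m=3$ are case~(4), which requires $k_1=1$, and case~(5) with $m=3$, which requires $k_3=k_2+n_3$ (dummies on level $3$); both are excluded, so $H'$ is not weighted.

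Now I would invoke Lemma~\ref{lastweightzero} with $m=4$: $H$ is roughly weighted, not weighted, without passers or dummies, and $H'$ is not weighted, hence $w_4=0$. Finally, Proposition~\ref{allnonzero} applied to $H$ gives $w_1\ge w_2\ge w_3\ge w_4=0$ together with $w_1,w_2,w_3>0$; and Lemma~\ref{subs_and_reducs} applied to $A=\{4^{n_4}\}$ — legitimate because $w_1>0$ with $1\in A^c$ — shows that $H'=H_A$ is roughly weighted with representation $[1;w_1,w_2,w_3]$, all of whose coefficients are positive. This is exactly the claimed conclusion.

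The step I expect to require the most care is the verification of the hypotheses of Lemma~\ref{lastweightzero}, namely that neither $H$ nor its three-level truncation $H'$ is weighted; the remainder is a routine assembly of earlier lemmas. This is precisely where Theorem~\ref{BTW} does the real work, together with the underlying principle that a disjunctive hierarchical game can be weighted with three or four levels only in the presence of a trivial top level (passers) or a trivial bottom level (dummies), both of which are ruled out here.
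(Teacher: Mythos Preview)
Your proof is correct and follows essentially the same approach as the paper's own proof: use Theorem~\ref{BTW} to see that $H'$ is not weighted, apply Lemma~\ref{lastweightzero} to get $w_4=0$, and invoke Proposition~\ref{allnonzero} for the positivity of $w_1,w_2,w_3$. You are in fact more thorough than the paper, which silently skips the verification that $H$ itself is not weighted (a hypothesis of Lemma~\ref{lastweightzero}) and does not explicitly cite Lemma~\ref{subs_and_reducs} for the rough weightedness of $H'$.
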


\begin{proof}
The game $H_\exists({\bf n}',{\bf k}')$  is roughly weighted and it is not weighted by Theorem~\ref{BTW}. By Lemma~\ref{lastweightzero} we get then $w_4=0$. By Proposition~\ref{allnonzero} each of the  weights $w_1,w_2,w_3$ is nonzero.
\end{proof}

The reader might expect that $w_m=0$ implies that the $m$th level must consist of dummies. In a roughly weighted game this may not be the case and here is an example illustrating this.

\begin{example}
Let us consider disjunctive hierarchical game $H=H_\exists({\bf n},{\bf k})$ with ${\bf n}=(3,3,3)$ and ${\bf k}=(2,3,5)$. It is roughly weighted relative to the weights $[1; \frac{1}{2},\frac{1}{2},0]$.  Indeed, the shift-minimal winning coalitions of $H$ are $\{1^2\}$, $\{2^3\}$, $\{2^2,3^3\}$. They all have weight at least 1. The unique shift-maximal losing coalition $\{1,2,3^2\}$ also has weight~$1$ but this is allowed. The players of the third level are not dummies despite having weight 0. Moreover in any other system of weights consistent with the game $H$, players of level three will have weight~$0$.  
\end{example} 

\begin{proof}
Let us prove the last statement about the game in this example. If $[1;w_1,w_2,w_3]$ is any rough voting representation for $H$, then  the following system of inequalities must hold:
\begin{eqnarray}
w_1&\ge& \frac{1}{2},\\
w_2&\ge& \frac{1}{3},\\
2w_2+3w_3&\ge& 1,\\
w_1+w_2+2w_3&=& 1.
\end{eqnarray}
However (5) and (8) imply $w_2+2w_3\le \frac{1}{2}$, which implies $2w_2+4w_3\le 1$, which together with (7) implies $w_3=0$.
\end{proof}

Now we can restrict the number of nontrivial levels to four.

\begin{lemma}
\label{fourlevels}
A roughly weighted $m$-level disjunctive hierarchical game $H=H_\exists({\bf n},{\bf k})$ without passers and without dummies may have no more than four levels, i.e.,  $m\le 4$. 
\end{lemma}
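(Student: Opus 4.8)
The plan is to argue by contradiction. Suppose $m\ge 5$; I will derive a conflict between two facts already established: in a roughly weighted $m$-level disjunctive hierarchical game without passers and without dummies the weights $w_1,\dots,w_{m-1}$ are strictly positive (Proposition~\ref{allnonzero}(ii)), whereas in every \emph{four}-level such game the bottom weight $w_4$ must vanish (Corollary~\ref{L4}).

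First I would truncate $H$ to its top four levels. Iterating Proposition~\ref{cut_tail} and using that the subgame operation composes (so that $(G_A)_B=G_{A\cup B}$), the game $H''=H_\exists\big((n_1,n_2,n_3,n_4),(k_1,k_2,k_3,k_4)\big)$ is the subgame $H_A$ of $H$ for $A=\{5^{n_5},\ldots,m^{n_m}\}$. By Proposition~\ref{cut_tail} this truncation never introduces dummies, and since the first level is left intact the threshold $k_1>1$ is preserved, so $H''$ has no passers either; thus $H''$ is a genuine four-level hierarchical game in canonical form with neither passers nor dummies. Next I would carry the rough voting representation down to $H''$: since $w_1>0$ by Proposition~\ref{allnonzero}(ii) applied to $H$, and the level-$1$ players lie outside $A$, Lemma~\ref{subs_and_reducs} applies and shows that $H''$ is roughly weighted with rough voting representation $[1;w_1,w_2,w_3,w_4]$ — the same weights, and still quota $1$ because $H''$ is a subgame. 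Now Corollary~\ref{L4} applies to $H''$ and forces $w_4=0$. On the other hand, applying Proposition~\ref{allnonzero}(ii) to $H$ itself (legitimate since $4\le m-1$) gives $w_4>0$. This contradiction proves $m\le 4$.

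I do not anticipate a substantive obstacle; the work is essentially bookkeeping. The points that deserve care are: (i) that the properties \emph{no passers} and \emph{no dummies} survive the iterated truncation; (ii) that the positivity hypothesis of Lemma~\ref{subs_and_reducs} (some player outside $A$ has positive weight) holds, so that the quota stays $1$ and the weights are inherited unchanged; and (iii) that Corollary~\ref{L4} carries no hidden \emph{not weighted} requirement, a four-level hierarchical game with no passers and no dummies being automatically non-weighted by Theorem~\ref{BTW} (its subgame on the first three levels is non-weighted).
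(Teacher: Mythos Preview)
Your proof is correct and follows essentially the same route as the paper. The only cosmetic difference is that you truncate all the way down to the first four levels and invoke Corollary~\ref{L4} to force $w_4=0$, whereas the paper truncates off just the bottom level and invokes Lemma~\ref{lastweightzero} directly on the resulting $(m{-}1)$-level subgame to force $w_{m-1}=0$; since Corollary~\ref{L4} is itself an immediate consequence of Lemma~\ref{lastweightzero}, the two arguments are interchangeable.
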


\begin{proof}
Suppose $m\ge 5$. Consider the game $H'=H_\exists({\bf n}',{\bf k}')$, where ${\bf n}'=(\row n{m-1})$ and ${\bf k}'=(\row k{m-1})$. By Proposition~\ref{cut_tail} $H'$ is a subgame of $H$ and has no passers or dummies. By Lemma~\ref{subs_and_reducs} it is roughly weighted. As it has four or more levels, by Theorem~\ref{Thm_1} it is not weighted.  By Proposition~\ref{allnonzero} we have $w_{m-1}>0$, but we also have $w_{m-1}=0$ by Lemma~\ref{lastweightzero} applied to $H'$. This contradiction proves the lemma.
\end{proof}

We will see in the following section that four nontrivial levels are also not achievable.

\section{A  Characterization of Roughly Weighted Disjunctive Hierarchical Games}

Now we can start our full characterisation of all roughly weighted hierarchical games.  Due to the results of the previous section our main focus will be on $2$-level ones,  then $3$-level ones, and then showing that the fourth level may not be added unless we allow dummies or passers. 

\subsection{Some general  comments}
Here is our general strategy to analyse if a particular disjunctive hierarchical game $G=H_\exists({\bf n},{\bf k})$ is roughly weighted but not weighted. Firstly we list all shift-minimal winning coalitions and write a system of inequalities in $\row wn$ that is equivalent to the fact that in the game with rough voting representation $[1;\row wn]$ these coalitions are above or on the threshold. Requiring that shift-minimal winning coalitions are on or above the threshold is sufficient for ensuring that all winning coalitions are on or above the threshold.   This is due to the fact that every shift reduces the weight and adding players does not decrease the weight.
By Theorem~\ref{shift-maximal-losing} there is a unique shift-maximal losing coalition $M$. So then (assuming no dummies)  we by Corollary~\ref{w(M)=q} we have the following equation
\begin{equation}
\label{shift_maximal_weight}
(k_1-1)w_1+(k_2-k_1)w_2+\ldots+(k_n-k_{n-1})w_n = 1.
\end{equation} 
 $M$ is exactly on the threshold.  The so-composed system has a solution if and only if the game is roughly weighted. 

The possible shift-minimal winning coalitions in a two- or a three-level disjunctive hierarchical game $G=H_\exists ({\bf n},{\bf k})$ and the inequalities corresponding to them are as follows: 
\begin{itemize}
\item When $k_i \leq n_i$,  we have  shift-minimal winning coalition $\{i^{k_i}\}$ and the corresponding inequality
\begin{equation}
\label{n1winning}   
k_iw_i \geq 1. 
\end{equation}  

\item In the case when $k_2 > n_2$ the coalition $\{1^{k_2-n_2},2^{n_2}\}$ is a shift-minimal winning coalition, then we have
\begin{equation}
\label{n2winning}   
(k_2-n_2)w_1 + n_2w_2 \geq 1. 
\end{equation}  

\item In the case when $k_3 > n_3$ there are two possibilities, either $k_2 \leq n_2$, or $k_2 > n_2$. Suppose $k_2 \leq n_2$. Since $\{2^{k_3-n_3},3^{n_3}\}$ is a shift-winning coalition, then we have
\begin{equation}
\label{n3winning}   
(k_3-n_3)w_2 + n_3w_3 \geq 1.
\end{equation}  
(We note that $k_3-n_3 < k_2\le n_2$ in this case.)
 And if $k_2 > n_2$, then since $k_3-n_3 < k_2 < k_1 + n_2$, and $k_1 \leq n_1$, the coalition $\{1^{k_3-n_2-n_3},2^{n_2},3^{n_3}\}$ is a shift-minimal winning coalition, and we have
\begin{equation}
\label{big_n3winning}   
(k_3-n_2-n_3)w_1 + n_2w_2 + n_3w_3 \geq 1.
\end{equation}  
 \end{itemize}

\subsection{Two-level games}

\begin{theorem}
\label{twolevel}
Let $H=H_\exists({\bf n},{\bf k})$ be a two-level disjunctive hierarchical game  with no passers and no dummies. Then $H$ is roughly weighted but not weighted iff one of the following conditions is satisfied: 
\begin{enumerate}
\item[(i)] ${\bf k}=(2,4) $ with $n_1\ge 2$ and $n_2\ge 4$;
\item[(ii)]  ${\bf k}=(k,k+2)$, where $k>2$,  with $n_1\ge k$ and $n_2=4$.
\end{enumerate}
If $[1;w_1,w_2]$ is a rough voting representation for $H$, then $w_2=w_1/2$. Moreover, in case (i) we have $(w_1,w_2)=(\frac{1}{2},\frac{1}{4})$.
\end{theorem}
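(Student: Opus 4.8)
The plan is to normalise the quota to $1$ and, invoking Lemma~\ref{equal_weights} together with Proposition~\ref{allnonzero}(i), to work with a rough voting representation $[1;w_1,w_2]$ satisfying $w_1\ge w_2\ge 0$. Set $a=k_1-1$ and $b=k_2-k_1$; since there are no passers $a\ge 1$, and $b\ge 1$ always. By Theorem~\ref{shift-maximal-losing} the unique shift-maximal losing coalition is $M=\{1^{k_1-1},2^{k_2-k_1}\}$, so if $H$ is roughly weighted but not weighted, Corollary~\ref{w(M)=q} yields
\[
a w_1 + b w_2 = 1 .
\]
First I would write down the shift-minimal winning coalitions: $\{1^{k_1}\}$ always (using $k_1\le n_1$), together with $\{2^{k_2}\}$ when $k_2\le n_2$ and $\{1^{\,k_2-n_2},2^{n_2}\}$ when $k_2>n_2$ (the no-dummies condition $k_2<k_1+n_2$ guarantees the latter is genuinely new). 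Thus $H$ is roughly weighted precisely when the linear system consisting of $a w_1+b w_2=1$, the level-$1$ inequality $(a+1)w_1\ge 1$, and the single relevant level-$2$ inequality admits a solution with $w_1\ge w_2\ge 0$.

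Next I would make the easy reductions. By Theorem~\ref{BTW} (with $k_1>1$ and no dummies) the game fails to be weighted exactly when $b\ge 2$ and $n_2\ge b+2$, which I may therefore assume. A short check rules out $w_2=0$: it would force $w_1=1/a$, whence the level-$2$ constraint fails outright if $k_2\le n_2$ and forces $n_2=b+1$ if $k_2>n_2$, both impossible; so $w_1\ge w_2>0$.

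The core of the argument is the feasibility analysis, split by the sign of $k_2-n_2$. Substituting $w_1=(1-b w_2)/a$, the condition $w_1\ge w_2$ becomes $w_2\le 1/(a+b)$ and the level-$1$ inequality becomes $w_2\le 1/((a+1)b)$, the latter being the stronger since $(a+1)b\ge a+b$. If $k_2\le n_2$ the level-$2$ inequality is $w_2\ge 1/(a+b+1)$, so a solution exists iff $1/(a+b+1)\le 1/((a+1)b)$, which reduces to $a(b-1)\le 1$, forcing $a=1,\ b=2$; this is case~(i), with the admissible interval degenerating to the single point $(w_1,w_2)=(\tfrac12,\tfrac14)$. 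If $k_2>n_2$ (which already forces $a\ge 2$), the same substitution turns the level-$2$ inequality into $w_2\ge (a-c)/(a n_2-c b)$ with $c=k_2-n_2$; comparing with $w_2\le 1/((a+1)b)$ and writing $d=a+1-c$, $n_2=b+d$, feasibility reduces to $(b-1)(d-1)\le 1$, forcing $b=2,\ d=2$, i.e. $n_2=4$ and $k_1=a+1>2$ --- this is case~(ii), where again $w_2$ (hence $w_1$) is pinned down, giving $(w_1,w_2)=(1/k_1,\,1/(2k_1))$. In both surviving cases $w_2=w_1/2$. For the converse one simply checks that the displayed weights form a rough voting representation --- all shift-minimal winning coalitions have weight exactly $1$ and $w(M)=1$, while $w_1\ge w_2\ge 0$ makes $M$ the heaviest losing coalition --- and Theorem~\ref{BTW} confirms non-weightedness.

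I expect the main difficulty to be bookkeeping rather than conceptual: ensuring the list of shift-minimal winning coalitions is complete and correctly case-split, and carrying the two substitutions through to the clean inequalities $a(b-1)\le 1$ and $(b-1)(d-1)\le 1$ without slips; once there, the rigidity ($b=2$, together with the side condition on $n_2$) drops out immediately.
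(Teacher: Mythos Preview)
Your argument is correct and follows the same overall strategy as the paper: normalise to quota $1$, use Corollary~\ref{w(M)=q} to get $(k_1-1)w_1+(k_2-k_1)w_2=1$, split into the cases $k_2\le n_2$ and $k_2>n_2$, and combine with the shift-minimal winning inequalities. The only real difference is tactical. In the first case the paper argues by a minimality trick (if $k_1w_1>1$ or $k_2w_2>1$ one could lower a weight and make the game weighted) and then solves $1/k_1+k_1/k_2=1$; you instead substitute $w_1=(1-bw_2)/a$ and reduce feasibility to $a(b-1)\le 1$. In the second case the paper uses a geometric supporting-line argument on the two extreme points of the feasible polyhedron; you again substitute and arrive at the integer inequality $(b-1)(d-1)\le 1$ with $d=n_2-b$. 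Your parametrisation by $a,b,d$ makes the Diophantine step especially transparent, and your preliminary exclusion of $w_2=0$ is exactly the paper's treatment of the $w_2=0$ extreme point. Both routes yield the same unique weights, hence $w_2=w_1/2$.
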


\begin{proof}
Let $[1;w_1,w_2]$ be a rough voting representation for $H$ and $M$ be its unique shift-maximal losing coalition. As we do not have passers we have $k_1>1$. We need to consider two cases: (i) $k_2\le n_2$ and (ii) $k_2>n_2$. In the first case, due to (\ref{n1winning}), we have $k_1w_1\ge 1$, $k_2w_2\ge 1$ and by Corollary~\ref{w(M)=q}  $w(M)=(k_1- 1)w_1+(k_2-k_1)w_2= 1$. If only we had $k_1w_1> 1$ or $k_2w_2> 1$ we could decrease $w_1$ or $w_2$ and make $w(M)<1$ in which case the game would be weighted. Hence $k_1w_1=k_2w_2=(k_1- 1)w_1+(k_2-k_1)w_2=1$. This implies $1/k_1+k_1/k_2= 1$.
Let $k_2-k_1=d$. Then $\frac{1}{k_1}+\frac{k_1}{k_1+d}= 1$, which is equivalent to $k_1+d= k_1d$ or $d= \frac{k_1}{k_1-1}$. It implies $1<d\le 2$ whence $d=2$ and $k_1= 2$. Thus we have only one solution: $k_1=2$ and $k_2=4$. This implies ${\bf w}=(w_1,w_2)=(\frac{1}{2},\frac{1}{4})$.

Let us consider the second case. Due to (\ref{shift_maximal_weight}), (\ref{n1winning}) and (\ref{n2winning})  ${\bf w}$ satisfy the inequalities ${k_1w_1\ge 1}$, ${(k_2-n_2)w_1+n_2w_2\ge 1}$ and the equality  $(k_1- 1)w_1+(k_2-k_1)w_2= 1$. The latter line must be a supporting line of the polyhedron area given by 
\[
k_1w_1\ge 1,\quad  (k_2-n_2)w_1+n_2w_2\ge 1,\quad  w_2\ge 0.
\]
 Indeed, if it cuts across this area, then  we will be able to find a point $(w_1,w_2)$ in this area with $(k_1- 1)w_1+(k_2-k_1)w_2< 1$. The game then will be weighted relative to $[1;w_1,w_2]$. This area has only  two extreme points and the line must pass through at least one of them. This is either when $w_2=0$ or when $k_1w_1=1$ and $(k_2-n_2)w_1+n_2w_2=1$.
Firstly, let us consider the case when $w_2=0$. In such a case $(k_2-n_2)w_1\ge 1$ and $(k_1-1)w_1= 1$. This can only happen when $k_2-n_2 \geq k_1-1$ or $n_2 \leq k_2-k_1+1$, but by Theorem~\ref{Thm_1} we cannot have $n_2 < k_2-k_1+1$, so it must be that $n_2 = k_2-k_1+1$. But in this case $H$ is weighted by Theorem~\ref{BTW}. 

Suppose now $k_1w_1=(k_2-n_2)w_1+n_2w_2=1$ and  $(k_1- 1)w_1+(k_2-k_1)w_2= 1$. Expressing $w_1$ and $w_2$ from the first two equations and substituting into the third we obtain $(k_2-k_1)(n_2-(k_2-k_1))= n_2$. Denoting $d=k_2-k_1$ we can rewrite this as $n_2= d+1+\frac{1}{d-1}$. As $n_2$ must be an integer we get $d=2$ and  $n_2= 4$. Now  
$
w_2=\frac{k_1-k_2+n_2}{n_2k_1}=\frac{1}{2k_1}=\frac{w_1}{2}.
$
It is easy to check that these weights indeed make $H$ roughly weighted and ${w(M)=(k-1)w_1+2\cdot\frac{w_1}{2}=kw_1=1}$.
\end{proof}

\subsection{Three-level games}

We now focus our investigation on the three-level games. The first observation that we are going to make is that if a three-level game disjunctive hierarchical does not have passers and dummies, then it is not weighted. This follows from Theorem~\ref{BTW}. We will use this often. 

\begin{proposition}
\label{w_3>0}
Let $H=H_\exists({\bf n},{\bf k})$ be a $3$-level  disjunctive hierarchical game without passers and dummies. If $H$ is roughly weighted with a rough voting representation $[1;w_1,w_2,w_3]$, then the subgame $H'=H_\exists({\bf n}',{\bf k}')$, where ${\bf n}'=(n_1,n_2)$ and ${\bf k}'=(k_1,k_2)$, is weighted.  
\end{proposition}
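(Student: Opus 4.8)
The plan is to argue by contradiction: assume $H'$ is not weighted and show that then $H$ cannot be roughly weighted, contradicting the hypothesis.

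First I would assemble the structural ingredients. By Proposition~\ref{cut_tail} the game $H'$ is a subgame of $H$ which again has no passers and no dummies, so by Lemma~\ref{subs_and_reducs} (note $w_1>0$ by Proposition~\ref{allnonzero}, so its hypothesis holds) it is roughly weighted with the restricted representation $[1;w_1,w_2]$. Since $H$ is three-level, without passers and without dummies, it is not weighted by Theorem~\ref{BTW}; hence Corollary~\ref{w(M)=q} applies and the unique shift-maximal losing coalition $M=\{1^{k_1-1},2^{k_2-k_1},3^{k_3-k_2}\}$ of $H$ has $w(M)=1$. Under the contradiction hypothesis, Lemma~\ref{lastweightzero} (applied to $H$ with the not-weighted $2$-level subgame $H'$) gives $w_3=0$, and Theorem~\ref{twolevel} pins $H'$ down to one of two shapes: either ${\bf k}'=(k_1,k_2)=(2,4)$ with $n_2\ge 4$ and $(w_1,w_2)=(\tfrac12,\tfrac14)$; or ${\bf k}'=(k_1,k_1+2)$ with $k_1>2$ and $n_2=4$, in which case the shift-maximal losing coalition $\{1^{k_1-1},2^{2}\}$ of $H'$ has weight $k_1w_1$, so $k_1w_1=1$ by Corollary~\ref{w(M)=q} and $(w_1,w_2)=(\tfrac{1}{k_1},\tfrac{1}{2k_1})$.

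Next I would produce an explicit winning coalition of $H$ of weight strictly below~$1$ — the sought contradiction. The candidate is a size-$k_3$ coalition filled greedily from the cheapest (third) level upwards: $X=\{3^{k_3}\}$ when $k_3\le n_3$; $X=\{2^{k_3-n_3},3^{n_3}\}$ when $k_3>n_3$ and $k_3-n_3\le n_2$; and $X=\{1^{k_3-n_2-n_3},2^{n_2},3^{n_3}\}$ otherwise. In each case $X$ has cardinality $k_3$, hence is winning, and is a legitimate submultiset of $\bar P$: for the middle case because $k_3-n_3<k_2\le n_2$, for the last case because $k_3-n_2-n_3<k_2-n_2<k_1\le n_1$ — both using condition~(b) of Theorem~\ref{Thm_1} together with the no-dummies inequality $k_3<k_2+n_3$. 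Since $w_3=0$, the weights of these coalitions are $0$, $(k_3-n_3)w_2$, and $(k_3-n_2-n_3)w_1+n_2w_2$ respectively; substituting the values of $w_1,w_2$ from the previous paragraph and using $k_3-n_3<k_2$ makes each of them $<1$ in both shapes. (For example, in the middle case under ${\bf k}'=(2,4)$ one gets $(k_3-n_3)/4<4/4=1$; in the last case under ${\bf k}'=(k_1,k_1+2)$ one gets $\bigl((k_3-n_3)-2\bigr)/k_1<k_1/k_1=1$. The last case is in fact vacuous under ${\bf k}'=(2,4)$, since there $k_3-n_3<k_2=4\le n_2$.) This contradicts the rough weightedness of $H$, so $H'$ is weighted.

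The only real obstacle is the bookkeeping: checking that the greedy coalition always fits inside the three levels, that the split on where $k_3-n_3$ lies relative to $n_2$ is exhaustive and compatible with the two shapes handed to us by Theorem~\ref{twolevel}, and that the elementary inequalities come out strict rather than merely non-strict. Once the weights from Theorem~\ref{twolevel} are in hand, each case reduces to a one-line computation.
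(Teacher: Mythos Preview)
Your proof is correct and follows essentially the same route as the paper: assume $H'$ is not weighted, invoke Lemma~\ref{lastweightzero} to force $w_3=0$, use Theorem~\ref{twolevel} to pin down the two possible shapes of $(k_1,k_2)$ and the values of $(w_1,w_2)$, and then exhibit a winning coalition of size $k_3$ whose weight is strictly below~$1$. Your greedy bottom-up construction of that coalition unifies what the paper does by separate case analysis, but the computations are the same; one minor wording slip is the phrase ``for the middle case because $k_3-n_3<k_2\le n_2$'' --- the inequality $k_2\le n_2$ only holds in shape~(i), but legitimacy in the middle case follows directly from its defining condition $k_3-n_3\le n_2$, so no harm is done.
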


\begin{proof}
Suppose $H'$ is not weighted. Then by Lemma~\ref{lastweightzero} $w_3=0$ and $w_2>0$ by Proposition~\ref{allnonzero}. By Theorem~\ref{twolevel} we must consider the following two cases. 

(i) ${\bf k}=(2,4,a)$, where $a\ge 5$. If $a\le n_3$, we would have a winning coalition $\{3^{a}\}$ of zero weight and any player of the first two levels will be a passer. Thus $a>n_3$. As $H'$ falls under case (i) of Theorem~\ref{twolevel} we have $w_1=\frac{1}{2}$ and $w_2=\frac{1}{4}$ and $n_2\ge 4$. As no dummies present, by Theorem~\ref{Thm_1}, we have $n_3\ge k_3-k_2+1= a-4+1=a-3$. As $a-n_3\le 3<n_2$,  the coalition $\{2^{a-n_3}, 3^{n_3}\}$ is a winning coalition whose weight is at most $\frac{3}{4}$, a contradiction.

(ii) ${\bf k}=(k,k+2,a)$, where $k\ge 3$ and $a\ge k+3$. As $H'$ now is under case (ii) of Theorem~\ref{twolevel}
we know that $n_2= 4$ and $w_1=\frac{1}{k}$ and $w_2=\frac{1}{2k}$ with $k\ge 3$. As no dummies present, by Theorem~\ref{Thm_1}, we have $n_3\ge k_3-k_2+1= a-(k+2)+1=a-k-1$. As $a-n_3 \le k+1 \le n_1+1$, then $a-n_3 \le n_1+1$. So either the coalition $\{2^{a-n_3}, 3^{n_3}\}$ is a legitimate winning coalition (if $a-n_3 \le n_2=4$) or alternatively $\{1^{a-n_3-4}, 2^4, 3^{n_3}\}$ is a legitimate winning coalition. In the first case the weight of such winning coalition will be $\frac{a-n_3}{2k}\le \frac{2}{3}$. In the second, the weight of the winning coalition would be 
\[
\frac{a-n_3-4}{k}+\frac{4}{2k}=\frac{a-n_3-2}{k}\le \frac{k-1}{k}<1.
\]
In both cases the weight of such minimal winning coalition is less than 1, a contradiction.
\end{proof}
%
We now know that when $H=H_\exists({\bf n},{\bf k})$ is a $3$-level  disjunctive hierarchical game without passers and dummies, then the two-level game $H'=H_\exists({\bf n}',{\bf k}')$, where ${\bf n}'=(n_1,n_2)$ and ${\bf k}'=(k_1,k_2)$, is weighted. This restricts possible values of $k_1$ and $k_2$. Essentially, according to Theorem~\ref{BTW}, we have two cases. In one case $k_2=k_1+1$. Let us explore it.

\begin{proposition}
\label{all_are_smaller}
Let $H=H_\exists({\bf n},{\bf k})$ with no passers and no dummies, where ${\bf n}=(n_1,n_2,n_3)$ and ${\bf k}=(k,k+1,k+a)$, where $a\ge 2$ is a positive integer, with $k_i\le n_i\,$ for $i=1,2,3$. Then $H$ is not roughly weighted. 
\end{proposition}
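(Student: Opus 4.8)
The plan is to reduce to the three-level game $H$ itself (no further subgames are needed) and derive a contradiction from the system of linear constraints described in the ``general comments'' section. Since $H$ has no passers and no dummies, by Proposition~\ref{allnonzero} any rough voting representation $[1;w_1,w_2,w_3]$ has $w_1\ge w_2\ge w_3$ with $w_1,w_2>0$, and by Corollary~\ref{w(M)=q} (the three-level game cannot be weighted, by Theorem~\ref{BTW}) the unique shift-maximal losing coalition $M=\{1^{k-1},2^{1},3^{a-1}\}$ satisfies $w(M)=(k-1)w_1+w_2+(a-1)w_3=1$. First I would also invoke Proposition~\ref{w_3>0}: the two-level subgame on ${\bf k}'=(k,k+1)$ is weighted, which is automatic here since $k_2=k_1+1$ — so this case is genuinely the ``$k_2=k_1+1$'' branch and no extra restriction on $k_1$ comes for free.

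Next I would write down the shift-minimal winning coalitions and their inequalities. Since $k_1=k\le n_1$ we get $\{1^k\}$, giving $kw_1\ge 1$. Since $k_2=k+1\le n_2$ we get $\{2^{k+1}\}$, giving $(k+1)w_2\ge 1$. For the third threshold $k_3=k+a$ I would split on whether $k+a\le n_3$ or $k+a>n_3$. If $k+a\le n_3$ then $\{3^{k+a}\}$ is winning, so $(k+a)w_3\ge 1$, and then $w(M)\ge (k-1)w_1+w_2+(a-1)w_3$; combining $kw_1\ge1$, $(k+1)w_2\ge1$, $(k+a)w_3\ge1$ with the monotonicity $w_1\ge w_2\ge w_3$ and the equality $w(M)=1$ should force a contradiction because $M$ plus a single extra third-level player is a winning coalition of weight $w(M)+w_3=1+w_3>1$ — fine — but $M$ itself lies below several winning coalitions whose weights are pinned near $1$; the quantitative squeeze is that $(k-1)w_1+w_2+(a-1)w_3=1$ together with $kw_1\ge1$ gives $w_1\ge w(M)$-type slack that can be spent to push $w_3$ to $0$, and then $(k+a)w_3\ge 1$ fails. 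If instead $k+a>n_3$, then (depending on whether $k+a-n_3\le n_2$) either $\{2^{k+a-n_3},3^{n_3}\}$ or $\{1^{k+a-n_2-n_3},2^{n_2},3^{n_3}\}$ is shift-minimal winning, giving inequalities~(\ref{n3winning}) or~(\ref{big_n3winning}); in each subcase I would again combine with $kw_1\ge1$, $(k+1)w_2\ge1$, $w_1\ge w_2\ge w_3$, and $w(M)=1$.

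The cleanest uniform argument, which I would try first before case-splitting, is this: from $kw_1\ge 1$ we have $(k-1)w_1\ge 1-w_1$, so the equality $w(M)=1$ gives $w_2+(a-1)w_3\le w_1$; but also $(k+1)w_2\ge1$ forces $w_2\ge 1/(k+1)$, and combining $w_1\ge w_2$ with $w_2+(a-1)w_3\le w_1$ yields $(a-1)w_3\le w_1-w_2$. Meanwhile $M$ with one third-level player removed and replaced appropriately, or rather the coalition $\{1^{k-1},2^{2},3^{a-1}\}$ — which is obtained from a winning coalition by the reverse of a shift — together with the fact that $\{1^{k-1},2^{1},3^{a}\}$ (size $k+a=k_3$, hence winning when $a\le n_3$, or handled via the boundary coalitions otherwise) has weight $w(M)+w_3=1+w_3$: actually the decisive winning coalition to compare against $M$ is $\{1^{k},3^{a-1}\}$ or $\{2^{k+1},3^{a-2}\}$, and I expect that matching one of these against $w(M)=1$ and the weight ordering forces $w_3=0$, and then the third-threshold inequality (in whichever of the three forms applies) is violated since its left-hand side becomes a combination of $w_1,w_2$ that is already accounted for in $w(M)=1$ with strictly positive leftover demand.

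The main obstacle will be organizing the third-threshold case analysis so that all of $k+a\le n_3$, $n_3<k+a\le n_2+n_3$, and $k+a>n_2+n_3$ are dispatched by essentially the same inequality manipulation rather than three separate computations; I anticipate that Corollary~\ref{n_2>1} (so $n_2\ge 2$) and the canonicity bound $n_3\ge k_3-k_2+1=a$ from Theorem~\ref{Thm_1} are exactly what is needed to keep the relevant ``small'' coalitions legitimate in each subcase, and that the arithmetic will collapse to showing that any admissible $(w_1,w_2,w_3)$ forces $w_3=0$ and then contradicts the requirement that the coalition meeting the third threshold be on or above the threshold.
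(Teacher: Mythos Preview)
Your main oversight is that the hypothesis of the proposition already gives you $k_i\le n_i$ for all $i=1,2,3$. There is therefore no case split to perform on the third threshold: the coalition $\{3^{k+a}\}$ is always a legitimate shift-minimal winning coalition, and the only third-level constraint is simply $(k+a)w_3\ge 1$. Your entire subcase analysis on $k+a>n_3$ and the appeal to inequalities~(\ref{n3winning}) or~(\ref{big_n3winning}) is spurious here; this proposition is \emph{precisely} the case where those complications are absent. (The subcases with some $k_i>n_i$ are handled separately, in Lemmata~\ref{234} and~\ref{23k} and their sequels.)

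With that cleared up, the paper's argument is short and you were circling around it without closing it. The only constraints are
\[
kw_1\ge 1,\qquad (k+1)w_2\ge 1,\qquad (k+a)w_3\ge 1,\qquad (k-1)w_1+w_2+(a-1)w_3=1.
\]
Since the left-hand side of the last equation is increasing in each $w_i$, its minimum over the region cut out by the first three inequalities is attained at $w_1=\tfrac{1}{k}$, $w_2=\tfrac{1}{k+1}$, $w_3=\tfrac{1}{k+a}$; one checks directly (the paper reduces the equality case to $a=(k^2+2k)/(k^2+k-1)<2$) that this minimum exceeds $1$ for all $k\ge 2$ and $a\ge 2$. Hence the system is infeasible and $H$ is not roughly weighted. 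Your instinct that the arithmetic ``forces $w_3=0$'' is not what actually happens: the third constraint pins $w_3\ge 1/(k+a)>0$. What the first two constraints together with $w(M)=1$ give is only an upper bound $w_3\le 1/[(a-1)k(k+1)]$, and it is this upper bound that is strictly smaller than $1/(k+a)$, not equal to zero. There is no need to hunt for auxiliary winning coalitions such as $\{1^{k-1},2^2,3^{a-1}\}$ or $\{2^{k+1},3^{a-2}\}$; the three ``pure'' shift-minimal coalitions $\{1^k\}$, $\{2^{k+1}\}$, $\{3^{k+a}\}$ already do all the work.
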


\begin{proof}
The shift-minimal winning coalitions are $\{1^{k}\}$, $\{2^{k+1}\}$ and $\{3^{k+a}\}$ and the inequalities in this case will be $kw_1\ge 1$, $(k+1)w_2\ge 1$ and $(k+a)w_3\ge 1$, respectively. The shift-maximal equation in this case will be $(k-1)w_1+w_2+(a-1)w_3=1$. As in the proof of Theorem~\ref{twolevel}, we may assume that all three aforementioned inequalities are in fact equalities, that  is  $w_1=\frac{1}{k}$, $w_2=\frac{1}{k+1}$, and that $w_3=\frac{1}{k+a}$. Substituting these weights in the shift-maximal equation we get a contradiction as  $(k-1)\frac{1}{k}+\frac{1}{k+1}+\frac{a-1}{k+a}=1$ is equivalent to $\frac{1}{k+1}+\frac{a-1}{k+a}=\frac{1}{k}$ which never happens for $k\ge 2$ and $a\ge 2$ as this is equivalent to $(a-1)k^2+(a-2)k-a=0$ from which $a=\frac{k^2+2k}{k^2+k-1}<2$, a contradiction.
\end{proof}

The most basic type of disjunctive hierarchical games that we will be referring to constantly is the one with ${\bf k}=(2,3,4)$, We will chracterise these in Lemmata~\ref{234} and~\ref{23k}.

\begin{lemma}
\label{234}
Let ${\bf k}=(2,3,4)$. The 3-level game $G=H_\exists({\bf n},{\bf k})$ with no dummies is roughly weighted if and only if $n_1 \geq 2$ and one of the following is true:  
\begin{itemize}
\item [(i)] ${\bf n}=(n_1,2,n_3)$, where $n_3 \geq 3$ and ${\bf w}=(\frac{1}{2},\frac{1}{4},\frac{1}{4})$;  
\item [(ii)] ${\bf n}=(n_1,n_2,2)$, where $n_2 \geq 3$ and ${\bf w}=(\frac{1}{2},\frac{1}{2}- \alpha,\alpha)$ with $\alpha \in [0,\frac{1}{6}]$;
\item [(iii)] ${\bf n}=(n_1,2,2)$ and ${\bf w}=(\frac{1}{2},\frac{1}{2}- \alpha,\alpha)$ with $\alpha \in [0,\frac{1}{4}]$. 
\end{itemize}
\end{lemma}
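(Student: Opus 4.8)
The plan is to follow the general strategy outlined in Section~4.1: list all shift-minimal winning coalitions of $G=H_\exists({\bf n},(2,3,4))$, write the corresponding system of inequalities in $w_1,w_2,w_3$, append the shift-maximal losing equation $w_1+w_2+w_3=1$ coming from $M=\{1,2,3\}$ (this is the shift-maximal losing coalition given by (\ref{shift_maximal}) with $k_1-1=1$, $k_2-k_1=1$, $k_3-k_2=1$), and then determine for which values of $n_2,n_3$ this system is solvable, reading off the solution set each time. Note first that $n_1\ge 2$ is forced: since $k_1=2\le n_1$ is required for a canonical representation and $n_1=1$ would make level~1 consist of passers (the case $k_1=1$ is excluded here as we are told $k_1=2$), so $n_1\ge 2$ always; also the value of $n_1$ beyond $n_1\ge 2$ never enters the analysis because no shift-minimal winning coalition uses more than $k_1-1=1$... actually uses exactly up to $k_1=2$ players of level~1, so $n_1\ge 2$ suffices and larger $n_1$ is irrelevant. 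The ``no dummies'' hypothesis together with Theorem~\ref{Thm_1} gives $n_3\ge k_3-k_2+1=2$, so $n_3\ge 2$ throughout; combined with Corollary~\ref{n_2>1} we get $n_2\ge 2$ as well. Hence the genuine case split is on whether $n_2=2$ or $n_2\ge 3$, and whether $n_3=2$ or $n_3\ge 3$, which is exactly the trichotomy (i)--(iii) in the statement (with (iii) being the overlap $n_2=2,n_3=2$ treated separately because it has a larger solution set).

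Next I would enumerate the shift-minimal winning coalitions in each case using the templates from Section~4.1. Since $k_1=2\le n_1$, $\{1^2\}$ is always shift-minimal winning, giving $2w_1\ge 1$. Since $k_2=3$, if $n_2\ge 3$ then $\{2^3\}$ is winning, giving $3w_2\ge 1$; if $n_2=2$ then instead $\{1^{k_2-n_2},2^{n_2}\}=\{1^1,2^2\}$ is shift-minimal winning, giving $w_1+2w_2\ge 1$. Since $k_3=4$, if $n_3\ge 4$ then $\{3^4\}$ is winning ($4w_3\ge 1$); if $n_3=3$ then (as $k_2=3\le n_2$) $\{2^{k_3-n_3},3^{n_3}\}=\{2^1,3^3\}$ gives $w_2+3w_3\ge 1$; if $n_3=2$ then $\{2^{k_3-n_3},3^{n_3}\}=\{2^2,3^2\}$ gives $2w_2+2w_3\ge 1$. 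One should also double-check no further shift-minimal winning coalitions arise — e.g.\ when $n_2=2$ and $n_3=2$ one must confirm $\{1,2^2,3^2\}$ and the like are not minimal because they contain $\{1,2^2\}$. Then, exactly as in the proof of Theorem~\ref{twolevel}, I would argue that since the game is not weighted (Theorem~\ref{BTW}), the hyperplane $w_1+w_2+w_3=1$ must be a supporting hyperplane of the polyhedron cut out by the winning inequalities and $w_2,w_3\ge 0$, so any rough voting representation lies on that hyperplane and, by the same squeezing argument used there, the ``pure'' inequalities involving a single level ($2w_1\ge 1$, $3w_2\ge 1$, $4w_3\ge 1$) that are present must be tight — this pins down $w_1=\tfrac12$ in all cases, and $w_2=\tfrac13$ when $n_2\ge 3$ (wait: in case (ii) the claimed solution has $w_2=\tfrac12-\alpha$ ranging up to $\tfrac12$, not forced to $\tfrac13$) — so more care is needed: the tightening argument forces a vertex of the feasible region to lie on the hyperplane, and one must then solve the small linear system at each candidate vertex and check which solutions actually give a valid rough voting representation (all winning coalitions $\ge 1$, $M$ exactly $=1$, nonnegativity).

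Concretely, the computations I anticipate: In case (i), $n_2=2$, $n_3\ge 3$: the constraints are $2w_1\ge1$, $w_1+2w_2\ge1$, and ($w_2+3w_3\ge1$ if $n_3=3$, or $4w_3\ge1$ if $n_3\ge4$), with $w_1+w_2+w_3=1$. Forcing $w_1=\tfrac12$, then $w_1+2w_2\ge1$ gives $w_2\ge\tfrac14$, and $w_3=\tfrac12-w_2\le\tfrac14$; the level-3 constraint then forces equality, yielding ${\bf w}=(\tfrac12,\tfrac14,\tfrac14)$, and one checks this works for every $n_3\ge 3$ (including $n_3\ge 4$ where $4w_3=1$). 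In case (ii), $n_2\ge 3$, $n_3=2$: constraints $2w_1\ge1$, $3w_2\ge1$, $2w_2+2w_3\ge1$, $w_1+w_2+w_3=1$; with $w_1=\tfrac12$ we get $w_2+w_3=\tfrac12$, so $2w_2+2w_3=1$ automatically, and $3w_2\ge1$ gives $w_2\ge\tfrac13$, i.e.\ writing $w_3=\alpha$ we have $w_2=\tfrac12-\alpha$ with $\tfrac12-\alpha\ge\tfrac13$, that is $\alpha\le\tfrac16$, and $\alpha\ge0$; this is exactly ${\bf w}=(\tfrac12,\tfrac12-\alpha,\alpha)$, $\alpha\in[0,\tfrac16]$. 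In case (iii), $n_2=2$, $n_3=2$: constraints $2w_1\ge1$, $w_1+2w_2\ge1$, $2w_2+2w_3\ge1$, $w_1+w_2+w_3=1$; with $w_1=\tfrac12$, again $w_2+w_3=\tfrac12$ makes the third constraint tight, and $w_1+2w_2\ge1$ gives $w_2\ge\tfrac14$, so $w_3=\alpha\le\tfrac14$, giving ${\bf w}=(\tfrac12,\tfrac12-\alpha,\alpha)$, $\alpha\in[0,\tfrac14]$. Finally I must also verify the converse direction in each case (that these weight vectors genuinely realise $G$ as roughly weighted, i.e.\ every winning coalition has weight $\ge1$ and every losing coalition $\le1$), and that these are the \emph{only} cases, i.e.\ that if $n_1<2$ or if the game is not of the form (i)--(iii) there is no rough representation — but by the remark above $n_1\ge2$ is automatic and $n_2,n_3\ge2$ is automatic from no-dummies and Corollary~\ref{n_2>1}, so (i)--(iii) genuinely exhaust all cases.

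The main obstacle I expect is the bookkeeping in the converse/verification step: one must confirm that requiring only the shift-minimal winning coalitions to be on-or-above the threshold really does force \emph{all} winning coalitions above threshold and keeps \emph{all} losing coalitions at-or-below, in particular that the unique shift-maximal losing coalition $M=\{1,2,3\}$ has weight exactly~$1$ and dominates every other losing coalition (this is Proposition~\ref{maxvalue}, so it is essentially automatic once $w(M)=1$), and that in the boundary sub-cases (e.g.\ $n_3\ge 4$ in (i), where the level-3 inequality switches form) no new constraint is violated. A secondary subtlety is making sure the enumeration of shift-minimal winning coalitions is complete — that no mixed coalition like $\{1,2,3^{n_3}\}$ sneaks in as shift-minimal — which requires checking containment/shift relations among the candidate coalitions; this is routine but must be done carefully for each of the three cases.
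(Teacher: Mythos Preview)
Your approach mirrors the paper's closely, but there is a genuine gap in the case analysis. You write that the split on $n_2\in\{2\}\cup\{\ge 3\}$ and $n_3\in\{2\}\cup\{\ge 3\}$ ``is exactly the trichotomy (i)--(iii)'' and conclude that ``(i)--(iii) genuinely exhaust all cases''. They do not: the fourth cell $n_2\ge 3$ and $n_3\ge 3$ is never treated, and for the lemma to hold you must show that in this cell the game is \emph{not} roughly weighted. The paper does handle this cell, in two pieces: when $n_3\ge 4$ (so $k_i\le n_i$ for all $i$) it invokes Proposition~\ref{all_are_smaller} to rule out any solution; when $n_2\ge 3$ and $n_3=3$ it falls under the paper's Case~(iii), where the constraint from $\{2^1,3^3\}$ forces $w_2+3w_3\ge 1$, which together with $w_1=\tfrac12$ and $w_1+w_2+w_3=1$ gives $w_2\le\tfrac14$, contradicting $3w_2\ge 1$.

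Apart from this omission your computations in the three cases you do treat are correct and match the paper's. A minor remark: your appeal to Corollary~\ref{n_2>1} for $n_3\ge 2$ is not quite right (that corollary concerns interior levels $1<i<m$), but the bound $n_3\ge 2$ does follow, as you also note, from the no-dummies condition $n_3\ge k_3-k_2+1=2$.
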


\begin{proof}
Firstly, we note that by Theorem~\ref{Thm_1}  we have $2=k_1\le n_1$. We also note that the shift-maximal equation in this case by~(\ref{shift_maximal_weight}) is in this case
\begin{equation}
\label{shift_maximal_i}
w_1+w_2+w_3 = 1.
\end{equation}

\noindent
\textbf{Case (i)}. $k_i\le n_i$ for $i=1,2,3$ is considered in Proposition~\ref{all_are_smaller}. There are no solutions in this case.\par\smallskip

\noindent\textbf{Case (ii)}. Suppose $k_2 > n_2, k_3 \leq n_3$, then $n_3 \geq 4$. As $k_2=3$, by Corollary~\ref{n_2>1} it follows that $n_2$ must be $2$. The shift-minimal winning coalitions are $\{1^2\}$, $\{1,2^2\}$, $\{3^{4}\}$. So the corresponding inequalities  are $w_1 \geq \frac{1}{2}$, $w_1 + 2w_2 \geq 1$, and  $w_3 \geq \frac{1}{4}$. As in the proof of Proposition~\ref{all_are_smaller}, we may assume $w_3 = \frac{1}{4}$. From~(\ref{shift_maximal_i}) we get $w_1+w_2 = \frac{3}{4}$. It follows that $w_1 + 2(\frac{3}{4}-w_1) \geq 1$, whence $w_1 \leq \frac{1}{2}$, forcing $w_1 = \frac{1}{2}, w_2 = \frac{1}{4}, w_3 = \frac{1}{4}$. So it is roughly weighted only when ${\bf n}=(n_1,2,n_3)$, and ${\bf w}=(\frac{1}{2},\frac{1}{4},\frac{1}{4})$, as required.  
\par\smallskip

\noindent\textbf{Case (iii)}. Suppose $k_2 \leq n_2$, $k_3>n_3$. Then $n_3 \leq 3$ and $n_2 \geq 3$. Then the shift-minimal winning coalitions are $\{1^2\}$, $\{2^{3}\}$, $\{2^{4-n_3},3^{n_3}\}$. To justify this we have to note that by Theorem~\ref{Thm_1} $k_3-n_3< k_2\le n_2$ whence $4-n_3\le n_2$ and the last coalition is legitimate. The inequalities then will be $w_1 \geq \frac{1}{2}, w_2 \geq \frac{1}{3}$ and $(4-n_3)w_2 + n_3w_3 \geq 1$.
As above we may assume $w_1 = \frac{1}{2}$. Substituting this value of $w_1$ into~(\ref{shift_maximal_i}) in this case we get $w_2+w_3 = \frac{1}{2}$. So  $(4-n_3)w_2 + n_3(\frac{1}{2}-w_2) \geq 1$. Now by Corollary~\ref{n_2>1}  $n_3$ is either 2 or 3. If it is 3, then we get $w_2 + 3(\frac{1}{2}-w_2) \geq 1$ giving $w_2 \leq \frac{1}{4}$, but we know that $w_2 \geq \frac{1}{3}$, contradiction. If it is 2, then the system has solutions for any $w_2 \geq \frac{1}{3}$ and the game is roughly weighted with ${\bf w}=(\frac{1}{2},\frac{1}{2}-\alpha,\alpha)$, where $\alpha \in [0,\frac{1}{6}]$. In this case, ${\bf n}=(n_1,n_2,2)$. 
\par\smallskip
\noindent\textbf{Case (iv)}. Suppose $k_2 > n_2$ and $k_3>n_3$. Then $n_3 \leq 3$ and $n_2=2$. Since by Theorem~\ref{Thm_1} and Corollary~\ref{n_2>1} we have $4 = k_3 < k_2 + n_3$, then $4 - n_3 \leq 2 = n_2$ and the shift-minimal winning coalitions are $\{1^2\}$, $\{1,2^{2}\}$, $\{2^{4-n_3},3^{n_3}\}$. Then the inequalities will be: $w_1\ge \frac{1}{2}$, $w_1+2w_2\ge 1$, $(4-n_3)w_2 + n_3w_3 \geq 1$. If $n_3=2$, then the latter becomes $2w_2+2w_3 \geq 1$ or $w_2+w_3 \geq \frac{1}{2}$ which, in particular, imply $w_1=1-w_2-w_3\le \frac{1}{2}$, whence $w_1 = \frac{1}{2}$ and $w_2+w_3=\frac{1}{2}$.  Now from $w_1+2w_2 \geq 1$ we get $w_2 \geq \frac{1}{4}$.  
So this gives the solution $n=(n_1,2,2)$ with $w=(\frac{1}{2},\frac{1}{2}- \alpha, \alpha)$, where $\alpha \in [0,\frac{1}{4}]$, as required. Now if $n_3=3$, then the inequalities will be $w_1\ge \frac{1}{2}$, $w_1+2w_2 \geq 1$, $w_2+3w_3 \geq 1$. Again substituting $w_3=1-w_1-w_2$ into the latter inequality gives $3w_1 + 2w_2 \leq 2$. As $2w_2\ge 1-w_1$ we get $1+2w_1\le 2$ and  $w_1 \leq \frac{1}{2}$. Hence $w_1 = \frac{1}{2}$ and $w_2+w_3=\frac{1}{2}$. This together with $w_2+3w_3 \geq 1$ gives $w_3 \geq \frac{1}{4}$. But since $w_1=\frac{1}{2}$, then $w_1+2w_2 \geq 1$ gives $w_2 \geq \frac{1}{4}$, and so $w_3 \leq \frac{1}{4}$. Thus we have the weights $w=(\frac{1}{2},\frac{1}{4},\frac{1}{4})$, with $n=(n_1,2,3)$. This works. 
\end{proof}

\begin{lemma}
\label{23k}
Let $H=H_\exists({\bf n},{\bf k})$, where ${\bf k}=(2,3,k)$ and $k\ge 5$, be a 3-level disjunctive hierarchical game with no dummies.  Then $n_1\ge 2$ and it is roughly weighted if and only if 
$
n_3=k-2,\ \text{and ${\bf w}=(\frac{1}{2},\frac{1}{2},0)$}.  
$
\end{lemma}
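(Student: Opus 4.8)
The plan is to fix a rough voting representation $[1;w_1,w_2,w_3]$ of $H$, which by Lemma~\ref{equal_weights} we may take to respect the levels with quota $1$, and reduce the lemma to a handful of linear constraints. First, $n_1\ge 2$ is immediate from condition~(a) of Theorem~\ref{Thm_1}, since $k_1=2$. As $k_1=2>1$ the game has no passers, so together with the no-dummies hypothesis Proposition~\ref{allnonzero} gives $w_1\ge w_2\ge w_3\ge 0$ (and $w_1,w_2>0$). A $3$-level game with no passers and no dummies is not weighted (Theorem~\ref{BTW}), so Corollary~\ref{w(M)=q} applies to the unique shift-maximal losing coalition $M=\{1,2,3^{k-3}\}$ and gives the governing equation
\[
w_1+w_2+(k-3)w_3=1 .
\]
Finally, having no dummies forces $k_3<k_2+n_3$, i.e.\ $n_3\ge k-2$; so it remains to rule out $n_3\ge k-1$ and then to solve for the weights.

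I would pin down $n_3$ by combining a ``low'' shift-minimal winning coalition with the governing equation. If $n_3\ge k$ then $\{3^k\}$ is shift-minimal winning, so $w_3\ge 1/k$; together with $w_1\ge\frac12$ (from $\{1^2\}$) the equation gives $w_2\le\frac{6-k}{2k}$, contradicting $w_2\ge w_3\ge 1/k$ for $k\ge 5$. If $n_3=k-1$ then $\{2,3^{k-1}\}$ is shift-minimal winning (level $3$ is full, so no shift reaches a losing set), giving $w_2+(k-1)w_3\ge 1$; subtracting the governing equation yields $w_1\le 2w_3$, hence $w_3\ge\frac14$ and $w_2\ge\frac14$, and then the equation gives $1\ge\frac12+\frac14+(k-3)\cdot\frac14=\frac k4$, impossible for $k\ge 5$. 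Hence $n_3=k-2$.

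With $n_3=k-2$ the coalition $\{2^2,3^{k-2}\}$ is legitimate (as $n_2\ge 2$ by Corollary~\ref{n_2>1}) and shift-minimal winning (level $3$ is full), so $2w_2+(k-2)w_3\ge 1$; subtracting the governing equation gives $w_1\le w_2+w_3$, whence $w_2+w_3\ge\frac12$. Since $k-3\ge 2$, the equation then forces $1=w_1+w_2+(k-3)w_3\ge w_1+(w_2+w_3)+w_3\ge\frac12+\frac12+w_3$, so $w_3=0$; then $w_1+w_2=1$ with $w_1\le w_2$ and $w_1\ge w_2$ gives $w_1=w_2=\frac12$. For the converse I would check directly that, when $n_3=k-2$, the assignment $[1;\frac12,\frac12,0]$ realises $H$: a coalition $\{1^a,2^b,3^c\}$ has weight $(a+b)/2$; a losing coalition fails the $k_1=2$ and $k_2=3$ constraints, so $a+b\le 2$ and its weight is $\le 1$, while a winning coalition has $a\ge 2$, or $a+b\ge 3$, or (failing both) $a+b+c\ge k$ with $c\le k-2$ forcing $a+b\ge 2$ --- in each case the weight is $\ge 1$.

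The one delicate point, and the step I expect to be the main obstacle, is the combinatorial bookkeeping: checking in each regime of $n_3$ (and, for completeness, of $n_2$) exactly which coalitions are shift-minimal winning, and in particular that $\{3^k\}$, $\{2,3^{k-1}\}$ and $\{2^2,3^{k-2}\}$ really are shift-minimal --- which holds precisely because the bottom level is full or cannot absorb a shift. The shift-minimal winning coalitions and their associated inequalities recorded in the discussion preceding Theorem~\ref{twolevel} cover all the cases that arise, so I would invoke those rather than re-derive them.
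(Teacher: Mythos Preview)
Your argument is correct and follows the same overall strategy as the paper --- combine the shift-maximal equation $w_1+w_2+(k-3)w_3=1$ with the winning-coalition inequalities and solve --- but your organisation is cleaner. The paper splits into four cases according to whether $k_2\lessgtr n_2$ and $k_3\lessgtr n_3$, treating each combination of shift-minimal winning coalitions separately; you instead split only on the value of $n_3$ (namely $n_3\ge k$, $n_3=k-1$, $n_3=k-2$) and use the ordering $w_1\ge w_2\ge w_3$ from Proposition~\ref{allnonzero}(i) to absorb the $n_2$ sub-cases. This buys you a shorter proof: the inequality $w_2\ge w_3$ replaces the separate analysis of whether $\{2^3\}$ or $\{1,2^2\}$ is the relevant shift-minimal winning coalition. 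The paper's case-by-case treatment, on the other hand, is more explicit about which shift-minimal winning coalitions actually occur for each choice of parameters. Both routes arrive at the same linear contradictions and the same unique solution.
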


\begin{proof}
We note that the absence of dummies means that $k_2+n_3>k_3$ or $n_3>k-3$.
The shift-maximal equation is now
\begin{equation}
\label{shift_maximal_ii}
w_1+w_2+(k-3)w_3 = 1.
\end{equation}
\noindent \textbf{Case (i)}. The case when $k_i \leq n_i$ for all $i$ is treated in Proposition~\ref{all_are_smaller}. There are no solutions in this case.\par\smallskip

\noindent\textbf{Case (ii)}. As $k_1\le n_1$, suppose $k_2 > n_2$ and $ k_3 \leq n_3$.  It follows that $n_2$ must be $2$. The shift-minimal winning coalitions then are $\{1^2\}$, $\{1,2^2\}$, $\{3^{k}\}$. So the corresponding inequalities are $w_1 \geq \frac{1}{2}$, $w_1 + 2w_2 \geq 1$ and $w_3 \geq \frac{1}{k}$. By the usual trick we may assume that $w_3 = \frac{1}{k}$. Then from the shift-maximal equation~(\ref{shift_maximal_ii}) we get $w_1+w_2 = \frac{3}{k}$. It follows that $w_1 + 2(\frac{3}{k}-w_1) \geq 1$, so $w_1 \leq \frac{6-k}{k}\le \frac{1}{k}$, but $w_1 \geq \frac{1}{2}$, contradiction. 
\par\smallskip
\noindent\textbf{Case (iii)}. Suppose $k_2 \leq n_2$, and $n_3 < k_3=k$. Then $k_3-k_2+1=k-2\le n_3\le k-1$ and, in particular, by Corollary~\ref{n_2>1} $k-n_3\le 2\le n_2$.  Then the shift-minimal winning coalitions are $\{1^2\}$, $\{2^3\}$, $\{2^{k-n_3},3^{n_3}\}$, giving the inequalities $w_1 \geq \frac{1}{2}$, $w_2 \geq \frac{1}{3}$ and $(k-n_3)w_2 + n_3w_3 \geq 1$. We may set $w_1 = \frac{1}{2}$ which implies $w_2+(k-3)w_3 = \frac{1}{2}$. Let us consider two cases: (a) $n_3=k-1$ and (b) $n_3=k-2$.

(a) In this case the two inequalities become $w_2+(k-3)w_3 = \frac{1}{2}$ and $w_2+(k-1)w_3 \ge 1$. These imply $w_3\ge \frac{1}{4}$. But then $w_2+(k-3)w_3 \ge \frac{1}{3}+\frac{k-3}{4} >\frac{1}{2}$, contradiction.

(b) In this case the two inequalities become $w_2+(k-3)w_3 = \frac{1}{2}$ and $2w_2+(k-2)w_3 \ge 1$. This implies that either $w_3=0$ or $2(k-3)\le k-2$. The latter implies $k\le 4$, hence the only solution in this case is ${\bf w}=(\frac{1}{2},\frac{1}{2},0)$.\par\smallskip
 
\noindent\textbf{Case (iv)}. Suppose $k_2 > n_2$, $n_3 <k_3=k$, so $n_2=2$ and, as in case (iii),  $k-2\le n_3\le k-1$. Then the shift-minimal winning coalitions are $\{1^2\}$, $\{1,2^2\}$, $\{2^{k-n_3},3^{n_3}\}$, giving the inequalities $w_1\ge \frac{1}{2}$, $w_1+2w_2\ge 1$ and $(k-n_3)w_2 + n_3w_3 \geq 1$.  We have either (a) $n_3=k-2$ or $n_3=k-1$. 

(a) Suppose $n_3=k-2$. Then the last inequality becomes $2w_2+(k-2)w_3 \geq 1$. From the shift-maximal equation \eqref{shift_maximal_ii}  we get $w_2+(k-3)w_3\le \frac{1}{2}$, which together with the previous inequality implies either $w_3=0$ or $2(k-3)\le k-2$. As the latter implies $k\le 4$, we again have the solution ${\bf w}=(\frac{1}{2},\frac{1}{2},0)$.\par\smallskip

(b) Suppose $n_3=k-1$. Then the last inequality becomes $w_2+(k-1)w_3 \geq 1$. From the shift-maximal equation \eqref{shift_maximal_ii} we get $w_2+(k-3)w_3\le \frac{1}{2}$ from which $w_3\ge \frac{1}{4}$. But this contradicts to \eqref{shift_maximal_ii} since $w_1+w_2+(k-3)w_3\ge \frac{1}{2}+w_2+\frac{k-3}{4}>1$ for any $k\ge 5$.  
\end{proof}

If the disjunctive hierarchical game $H=H_\exists({\bf n},{\bf k})$ is roughly weighted, then all its reduced games will be also roughly weighted by Lemma~\ref{subs_and_reducs} and Proposition~\ref{allnonzero}.  Let us now make an important observation about the weights in those reduced games. Suppose $H$ has a rough voting representation $[1;w_1, w_2, w_3]$ and let $A=\{1^{s_1}, 2^{s_2},3^{s_3}\}$ be a submultiset with the total weight $w(A)=s_1w_1+s_2w_2+s_3w_3$. Then by Lemma~\ref{subs_and_reducs}, the reduced game $H^A$ has rough voting representation $[1 - w(A); w_1, w_2, w_3]$ or after normalisation
\begin{equation}
\label{normalisation}
\left[1; \frac{w_1}{1 - w(A)}, \frac{w_2}{1 - w(A)}, \frac{w_3}{1 - w(A)}\right].
\end{equation} 

\begin{lemma}
\label{k_1=2}
Suppose that a $3$-level hierarchical game $H=H_\exists({\bf n},{\bf k})$, where ${\bf k}=(2,k_2,k_3)$, has no  dummies and is roughly weighted with rough voting representation $[1;w_1,w_2,w_3]$. Then either $w_3=0$ or ${\bf k}=(2,3,4)$.
\end{lemma}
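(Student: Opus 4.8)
The plan is to use Proposition~\ref{w_3>0} to pin down the first two thresholds, and then dispose of the two surviving cases, the second by showing directly that no rough voting representation can exist. Since $k_1=2>1$ there are no passers, and there are no dummies by hypothesis, so a three-level disjunctive hierarchical game of this kind is never weighted (Theorem~\ref{BTW}); hence by Corollary~\ref{w(M)=q} the unique shift-maximal losing coalition $M=\{1,2^{k_2-2},3^{k_3-k_2}\}$ lies exactly on the threshold,
\[
w_1+(k_2-2)w_2+(k_3-k_2)w_3=1 .
\]
By Proposition~\ref{w_3>0} the two-level subgame on the first two levels, $H_\exists\bigl((n_1,n_2),(2,k_2)\bigr)$, is weighted, so Theorem~\ref{BTW} for two-level games forces either $k_2=3$ or $n_2=k_2-1$. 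I would split on these two alternatives.

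First, if $k_2=3$ then ${\bf k}=(2,3,k_3)$ with $k_3\ge4$. If $k_3=4$ then ${\bf k}=(2,3,4)$ and we are done; if $k_3\ge5$ then Lemma~\ref{23k} shows that the only possible rough voting representation of $H$ is $(\tfrac12,\tfrac12,0)$, so $w_3=0$.

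It remains to treat $n_2=k_2-1$ with $k_2\ge4$, and here I would show that $H$ is not roughly weighted at all, so this alternative never occurs. Since $k_2=n_2+1>n_2$, the description of shift-minimal winning coalitions of two- and three-level disjunctive hierarchical games recalled above shows that the shift-minimal winning coalitions of $H$ are $\{1^2\}$, $\{1,2^{k_2-1}\}$, and, for the bottom level, $\{3^{k_3}\}$ if $k_3\le n_3$ and $\{2^{k_3-n_3},3^{n_3}\}$ if $k_3>n_3$ (the latter is a legitimate coalition, because the absence of dummies gives $k_3-n_3<k_2$ and hence $k_3-n_3\le n_2$). Writing $d:=k_3-k_2\ge1$ and eliminating $w_1$ from the displayed equation, the inequalities coming from $\{1^2\}$ and $\{1,2^{k_2-1}\}$ become
\[
(k_2-2)w_2+dw_3\le\tfrac12,\qquad w_2\ge dw_3 ,
\]
which together give $w_3\le\frac1{2d(k_2-1)}$. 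If $k_3\le n_3$, the winning coalition $\{3^{k_3}\}$ forces $w_3\ge\frac1{k_3}=\frac1{k_2+d}$, hence $2d(k_2-1)\le k_2+d$, i.e.\ $k_2(2d-1)\le 3d$, which is impossible for $k_2\ge4$. If $k_3>n_3$, set $e:=k_3-n_3$, so $1\le e\le k_2-1$ and $n_3=k_2+d-e$, and the remaining inequality is $ew_2+n_3w_3\ge1$. The set of $(w_2,w_3)$ with $w_2,w_3\ge0$ satisfying the two displayed constraints is a triangle with vertices $(0,0)$, $\bigl(\frac1{2(k_2-2)},0\bigr)$ and $\bigl(\frac1{2(k_2-1)},\frac1{2d(k_2-1)}\bigr)$; evaluating $ew_2+n_3w_3$ at these vertices and using $e\le k_2-1$, $d\ge1$, $k_2\ge4$ gives values at most $0$, at most $\tfrac34$, and at most $\frac{e(d-1)+k_2+d}{2d(k_2-1)}\le\frac{k_2d+1}{2d(k_2-1)}<1$, the last inequality being equivalent to $d(k_2-2)>1$. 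Hence $ew_2+n_3w_3<1$ throughout the feasible region, contradicting the bottom-level inequality, so this alternative is vacuous and the lemma follows.

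The catalogue of shift-minimal winning coalitions is routine bookkeeping; the step I expect to need the most care is the last one, where the two linear inequalities extracted from $\{1^2\}$ and $\{1,2^{k_2-1}\}$ have to be combined with the bottom-level inequality and shown to be jointly infeasible, in particular computing the triangle's vertices correctly and checking $d(k_2-2)>1$.
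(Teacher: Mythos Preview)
Your argument is correct, and it is genuinely different from the paper's. The paper never invokes Proposition~\ref{w_3>0}; instead, assuming $w_3>0$ and $k_2>3$, it reduces by $A=\{2^{k_2-3}\}$ to a game with ${\bf k}'=(2,3,k_3-k_2+3)$, uses Lemma~\ref{23k} to force ${\bf k}'=(2,3,4)$, and then applies Lemma~\ref{234} and the normalization formula~(\ref{normalisation}) to conclude $2w_1<1$, contradicting the winning status of $\{1^2\}$. You, by contrast, use Proposition~\ref{w_3>0} up front to pin down that either $k_2=3$ or $n_2=k_2-1$, handle the first case exactly as the paper does (via Lemma~\ref{23k}), and dispose of the second by a direct linear-programming infeasibility argument on the shift-minimal inequalities. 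Your route avoids the reduction machinery and Lemma~\ref{234} entirely, at the cost of a somewhat longer vertex computation; the paper's route is shorter and more uniform with the rest of the section (which repeatedly reduces to the $(2,3,\cdot)$ cases), but leans on the normalization identity~(\ref{normalisation}). Both arguments are valid; yours has the minor bonus of showing that the alternative $n_2=k_2-1$, $k_2\ge4$ is not just incompatible with $w_3>0$ but in fact admits no rough voting representation at all.
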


\begin{proof}
Suppose $w_3>0$. If $k_2>3$, then $n_2\ge k_2-k_1+1=k_2-1$ so the second level contains at least $k_2-1$ elements and, in particular, $A=\{2^{k_2-3}\}$ is a submultiset of the multiset of players. Let us consider the reduced game $H^A$. Then by Proposition~\ref{remove_one}, $H^A=H({\bf n}',{\bf k}')$ with ${\bf n'}=(n_1,n_2-k_2+3, n_3)$ and ${\bf k}'=(2,3,k_3-k_2+3)$. Since $n_2-k_2+3\ge 2$ the reduced game still has three levels. By Proposition~\ref{subs_and_reducs} the game $H^A$ is also roughly weighted and, due to (\ref{normalisation}) the last weight of it will still be nonzero. Having $k_3 - k_2 + 3 > 4$ would imply by Lemma~\ref{23k} that the last weight is zero. Since that is not the case, then we have $k_3-k_2+3=4$ so ${\bf k}'=(2,3,4)$. 

Let $w=w(A)=(k_2-3)w_2$. Then by Lemma~\ref{234} and  (\ref{normalisation}) we have $\frac{w_1}{1-w}=\frac{1}{2}$. This means that $2w_1=1-w<1$ which contradicts the fact that $\{1^2\}$ is a wining coalition in $H$. Hence $k_2=3$ and ${\bf k}=(2,3,4)$.
\end{proof}

\begin{corollary}
\label{24x}
There does not exist a roughly weighted $3$-level disjunctive hierarchical game $H=H_\exists({\bf n},{\bf k})$ with ${\bf k}=(2,4,k_3)$ and  no  dummies.
\end{corollary}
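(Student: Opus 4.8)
The plan is to argue by contradiction: assume $H=H_\exists({\bf n},{\bf k})$ with ${\bf k}=(2,4,k_3)$ has no dummies and admits a rough voting representation $[1;w_1,w_2,w_3]$, and derive a contradiction. First I would record the cheap structural facts. Since the thresholds are strictly increasing, $k_3\ge 5$; since $k_1=2>1$ the game has no passers; and since a three-level disjunctive hierarchical game without passers and dummies is never weighted (Theorem~\ref{BTW}), Corollary~\ref{w(M)=q} applies and forces $w(M)=1$, where by Proposition~\ref{allnonzero} the unique shift-maximal losing coalition is $M=\{1,2^2,3^{k_3-4}\}$. Thus $w_1+2w_2+(k_3-4)w_3=1$.

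The decisive input is Lemma~\ref{k_1=2}: because ${\bf k}=(2,4,k_3)\ne(2,3,4)$, it gives $w_3=0$, so the shift-maximal equation collapses to $w_1+2w_2=1$. Next I would use that $n_1\ge k_1=2$ (Theorem~\ref{Thm_1}(a)), so $\{1^2\}$ is a winning coalition and rough weightedness yields $2w_1\ge 1$; hence $w_1\ge 1/2$ and therefore $w_2=(1-w_1)/2\le 1/4$.

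To close the argument I would exhibit a winning coalition of weight strictly below $1$. The absence of dummies gives $k_3<k_2+n_3=4+n_3$, i.e.\ $n_3\ge k_3-3$; and since $\{3^{k_3}\}$ would be a winning coalition of weight $k_3w_3=0<1$ whenever it is a legitimate submultiset, we must have $n_3\le k_3-1$. Hence $1\le k_3-n_3\le 3$, and since condition (b) of Theorem~\ref{Thm_1} forces $k_2<k_1+n_2$, hence $n_2\ge 3\ge k_3-n_3$, the submultiset $C=\{2^{k_3-n_3},3^{n_3}\}$ is well defined; it has exactly $k_3$ players from the first three levels (and fewer than $k_2=4$ from the first two), so it is winning. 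But $w(C)=(k_3-n_3)w_2\le 3w_2\le 3/4<1$, contradicting rough weightedness. This contradiction proves the corollary.

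The only genuinely delicate step is pinning down $w_3=0$, which I would get directly from Lemma~\ref{k_1=2} rather than re-deriving it; everything after that is a short weight count, the main care being to check that $C$ is a legitimate coalition --- which is exactly where the canonicity bound $n_2\ge 3$ and the no-dummies bound $n_3\ge k_3-3$ get used.
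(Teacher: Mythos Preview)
Your proof is correct and follows essentially the same route as the paper: invoke Lemma~\ref{k_1=2} to force $w_3=0$, use $\{1^2\}$ together with the shift-maximal equation $w_1+2w_2=1$ to get $w_2\le\frac14$, and then exhibit the winning coalition $\{2^{k_3-n_3},3^{n_3}\}$ of weight at most $3w_2<1$. The one place you are a bit cleaner than the paper is in obtaining $n_2\ge 3$: the paper argues by cases on whether the two-level subgame $H'$ is weighted (via Theorem~\ref{BTW} and Theorem~\ref{twolevel}), whereas you read it off directly from condition~(b) of Theorem~\ref{Thm_1}, which is simpler and entirely sufficient here.
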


\begin{proof}
Suppose on the contrary that $H$ is roughly weighted with rough voting representation $[1;w_1,w_2,w_3]$. Then by Lemma~\ref{k_1=2} we must have $w_3=0$. Consider $H'=H_\exists({\bf n}',{\bf k}')$, where ${\bf n}'=(n_1,n_2)$ and ${\bf k}'=(k_1,k_2)$. If it is weighted, then by Theorem~\ref{BTW} $n_2=k_2-k_1+1=3$. And if it is not, then $n_2 \geq 4$ by Theorem~\ref{twolevel}. In either case we have shift-minimal winning coalitions $\{1^2\}$ and $\{1,2^3\}$, hence  $w_1\ge \frac{1}{2}$ and $w_1+3w_2\ge 1$. By Theorem~\ref{Thm_1} we have $k_3-n_3\le 3$ so the third shift-minimal winning coalition is of the type $\{2^{k_3-n_3},3^{n_3}\}$. The weight of such coalition is not greater than $3w_2$. So we must have $w_2\ge \frac{1}{3}$. At the same time from the shift-maximal equation $w_1+2w_2=1$ and $w_1\ge \frac{1}{2}$ we have $w_2\le \frac{1}{4}$. This is a contradiction. 
\end{proof} 

\begin{lemma}
\label{k1k2}
Suppose that a $3$-level disjunctive hierarchical game $H=H_\exists({\bf n},{\bf k})$ with no passers and no dummies is roughly weighted. Then $k_2 - k_1 = 1$.
\end{lemma}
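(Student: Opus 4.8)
The plan is to reduce to the already-understood case $k_1=2$ and then derive a contradiction from the assumption $k_2-k_1\ge 2$. By Proposition~\ref{w_3>0} the two-level subgame $H_\exists((n_1,n_2),(k_1,k_2))$ is weighted, and since $H$ has no passers, $k_1>1$; so Theorem~\ref{BTW} leaves only two possibilities for this subgame: either $k_2=k_1+1$, which is exactly what we want, or $n_2=k_2-k_1+1$. Assume, for contradiction, that $d:=k_2-k_1\ge 2$; then $n_2=d+1$.

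First I would pass to a reduced game in which the first threshold equals $2$. Let $A=\{1^{k_1-2}\}$ be a set of $k_1-2$ players of the first level (legitimate since $k_1-2\le k_1\le n_1$). Exactly as in Propositions~\ref{cut_head} and~\ref{remove_one}, a direct computation of the winning coalitions shows that $H^A=H_\exists({\bf n}^{\ast},{\bf k}^{\ast})$, where ${\bf k}^{\ast}=(2,\,d+2,\,k_3-k_1+2)$ and ${\bf n}^{\ast}=(n_1-k_1+2,\,n_2,\,n_3)$. Conditions (a) and (b) of Theorem~\ref{Thm_1} hold for these parameters, so $H^A$ is genuinely a three-level game; it has no passers since $k_1^{\ast}=2$, and no dummies since $k_3<k_2+n_3$ becomes $k_3^{\ast}<k_2^{\ast}+n_3^{\ast}$. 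As $H$ has no passers or dummies, Proposition~\ref{allnonzero} gives $w_1>0$, so $A^c$ contains a player of positive weight; hence by Lemma~\ref{subs_and_reducs} the game $H^A$ is roughly weighted and, after normalising the quota, has a rough voting representation $[1;w_1^{\ast},w_2^{\ast},w_3^{\ast}]$.

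Next I would extract the contradiction. Being three-level with no passers or dummies, $H^A$ is not weighted by Theorem~\ref{BTW}; since $k_2^{\ast}=d+2\ge 4$ rules out ${\bf k}^{\ast}=(2,3,4)$, Lemma~\ref{k_1=2} forces $w_3^{\ast}=0$. The coalition $\{1^2\}$ is winning, so $w_1^{\ast}\ge\tfrac12$; and by Theorem~\ref{shift-maximal-losing} the unique shift-maximal losing coalition of $H^A$ is $M^{\ast}=\{1,2^d,3^{\,k_3-k_2}\}$, so Corollary~\ref{w(M)=q} gives $w(M^{\ast})=w_1^{\ast}+d\,w_2^{\ast}=1$ (using $w_3^{\ast}=0$), whence $w_2^{\ast}\le\tfrac1{2d}$. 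Now examine the third threshold. If $k_3^{\ast}\le n_3^{\ast}$ then $\{3^{k_3^{\ast}}\}$ would be a winning coalition of weight $0$, impossible in a roughly weighted game with quota $1$; so $k_3^{\ast}>n_3^{\ast}$. Since also $k_2^{\ast}>n_2^{\ast}$ and $k_3^{\ast}-n_3^{\ast}<k_2^{\ast}=d+2$ (no dummies), we get $k_3^{\ast}-n_3^{\ast}\le n_2^{\ast}=d+1$, so $\{2^{\,k_3^{\ast}-n_3^{\ast}},3^{n_3^{\ast}}\}$ is a winning coalition; its weight is $(k_3^{\ast}-n_3^{\ast})w_2^{\ast}\le\tfrac{d+1}{2d}<1$ because $d\ge 2$, contradicting the fact that every winning coalition has weight at least $1$. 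Hence $d=1$, i.e.\ $k_2-k_1=1$.

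I expect the main obstacle to be precisely the reduction step: for $k_1>2$ the third-level shift-minimal winning coalition may be forced to carry a first-level ``tail'' of length up to $k_1-2$, and there is no direct analogue of Lemma~\ref{k_1=2} to force $w_3=0$; both problems disappear once $k_1$ has been brought down to $2$. (For the special case $d=2$ one could instead conclude at once from Corollary~\ref{24x}, since there ${\bf k}^{\ast}=(2,4,k_3^{\ast})$.)
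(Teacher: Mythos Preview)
Your proof is correct. Both you and the paper begin by passing to the reduced game $H^A$ with $A=\{1^{k_1-2}\}$, bringing the first threshold down to $2$. From there the two arguments diverge. The paper performs a \emph{second} reduction: it removes $k_2'-4$ players from level~$2$ (the case $k_2'=4$, i.e.\ $d=2$, being dispatched immediately by Corollary~\ref{24x}) and lands on a game with thresholds $(2,4,\cdot)$, which Corollary~\ref{24x} declares not roughly weighted---contradiction. You instead invoke the stronger Lemma~\ref{k_1=2} once to force $w_3^{\ast}=0$, and then obtain the contradiction by a direct weight estimate on the third-level winning coalition $\{2^{\,k_3^{\ast}-n_3^{\ast}},3^{n_3^{\ast}}\}$: its weight is at most $(d+1)/(2d)<1$.

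Your route trades a second reduction step for a short explicit computation, which is a perfectly reasonable exchange. One remark: your preliminary appeal to Proposition~\ref{w_3>0} and Theorem~\ref{BTW} to pin down $n_2=d+1$ (and hence $k_2^{\ast}>n_2^{\ast}$) is not actually needed for the argument that follows. The bound $k_3^{\ast}-n_3^{\ast}\le d+1$ comes from the no-dummies condition alone, and canonicity already gives $n_2\ge d+1$, so the coalition $\{2^{\,k_3^{\ast}-n_3^{\ast}},3^{n_3^{\ast}}\}$ is legitimate and the weight bound $(d+1)w_2^{\ast}\le (d+1)/(2d)$ is unchanged. The inequality $k_2^{\ast}>n_2^{\ast}$ plays no role in deriving the contradiction.
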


\begin{proof}
Let $[1;w_1,w_2,w_3]$ be  a rough voting representation of $G$. As there is no passers, $k_1\ge 2$.  Suppose $k_2 - k_1 \geq 2$. 
Observe that all $3$-level games with $k_2 - k_1 \geq 2$ can be reduced to a $3$-level game where $k_1=2$, $k_2=4$ as follows. First, take the reduced game $H_1=H^{A}$ with $A=\{1^{k_1-2}\}$, which will result in a game $H_1=H_\exists(\textbf{n}',\textbf{k}')$, with $\textbf{n}'=(n_1',n_2,n_3)$ and $\textbf{k}'=(2,k_2',k_3')$, where $k_2'=k_2-k_1+2$, $k_3'=k_3-k_1+2$ and $n_1'=n_1-k_1+2$. 
Since $H_1$ is roughly weighted, then by Corollary~\ref{24x} we have $k_2' \geq 5$. By Theorem~\ref{Thm_1} $n_2 \ge k_2-k_1= k_2'-2$ and $n_2-(k_2'-4)\ge 2$. This shows that $n_2$ has enough players for a further reduction to $H_1^{A'}$, where $A'=\{2^{k_2'-4}\}$, without collapsing the second level. The resulting game with $\textbf{k}''=(2,4,k_3'-(k_2'-4))$ is not roughly weighted by Corollary~\ref{24x} which proves the lemma. 
\end{proof}

By combining Lemmata~\ref{23k} and~\ref{k1k2} we get the following

\begin{corollary}
\label{which_games}
If a $3$-level disjunctive hierarchical game $H=H_\exists({\bf n},{\bf k})$ does not have passers and dummies and is roughly weighted, then it belongs to one of the following two categories: 
\begin{itemize}
\item [(i)] $\textbf{k}=(k,k+1,k+2)$;
\item [(ii)] $\textbf{k}=(k,k+1,k_3)$, such that  $n_3 = k_3-k\ge 3$.
\end{itemize}
\end{corollary}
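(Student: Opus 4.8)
The plan is to read the corollary off directly from Lemmata~\ref{k1k2} and~\ref{23k}. First I would apply Lemma~\ref{k1k2} to get $k_2 - k_1 = 1$, so that ${\bf k} = (k, k+1, k_3)$ with $k = k_1 \geq 2$ (no passers forces $k_1 \neq 1$) and, since the disjunctive thresholds are strictly increasing, $k_3 \geq k_2 + 1 = k+2$. If $k_3 = k+2$ we are in case (i) with nothing more to prove, so the whole content is to show that $k_3 \geq k+3$ forces $n_3 = k_3 - k$ (which is then automatically $\geq 3$), i.e.\ case (ii).

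To do this I would reduce to a ${\bf k} = (2,3,\ast)$ game exactly as in the proof of Lemma~\ref{k1k2}: take $H_1 = H^A$ with $A = \{1^{k-2}\}$, which is the game $H_\exists({\bf n}', {\bf k}')$ with ${\bf n}' = (n_1-k+2,\, n_2,\, n_3)$ and ${\bf k}' = (2,\, 3,\, k_3-k+2)$. I would then verify the three things needed to quote Lemma~\ref{23k} for $H_1$ legitimately: (a) $H_1$ is still genuinely $3$-level --- the top level survives since $n_1 - k + 2 \geq 2$ by Theorem~\ref{Thm_1}(a), and the second level does not merge into the first since $n_2 > 1$ by Corollary~\ref{n_2>1}; (b) $H_1$ has no dummies --- the no-dummy inequality $n_3 > k_3 - k_2 = k_3-k-1$ for $H$ coincides verbatim with the inequality $n_3 > (k_3-k+2)-3$ for $H_1$; and (c) $H_1$ is roughly weighted --- apply Lemma~\ref{subs_and_reducs}, whose hypothesis is met because any level-$2$ player lies outside $A$ and has weight $w_2 > 0$ by Proposition~\ref{allnonzero}.

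Finally, since $k_3 \geq k+3$ we have $k_3 - k + 2 \geq 5$, so Lemma~\ref{23k} applies to $H_1$ and yields $n_3 = (k_3-k+2) - 2 = k_3 - k$; and $k_3 \geq k+3$ makes this $\geq 3$, which is precisely case (ii). The only step that requires care is the bookkeeping in the previous paragraph --- confirming that the reduction $H \mapsto H^{\{1^{k-2}\}}$ leaves all three levels nonempty and creates no dummies, so that Lemma~\ref{23k} is genuinely applicable; the rest is an immediate assembly of the two lemmata.
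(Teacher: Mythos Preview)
Your proposal is correct and follows essentially the same route as the paper: apply Lemma~\ref{k1k2} to force $k_2=k_1+1$, then reduce via $H^{\{1^{k-2}\}}$ to a game with ${\bf k}'=(2,3,k_3-k+2)$ and invoke Lemma~\ref{23k} when $k_3-k+2\ge 5$. The only difference is that you spell out the verifications (three levels survive, no dummies are created, rough weightedness is inherited via Lemma~\ref{subs_and_reducs} and Proposition~\ref{allnonzero}) that the paper leaves implicit.
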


\begin{proof}
By Lemma~\ref{k1k2} we have $k_2=k_1+1$. To prove the other claims we make a reduction of $H$ and consider $H'=G^A$ with $A=\{1^{k_1-2}\}$. Then $H'$ has parameters ${\bf n'}=(n_1-k_1+2,n_2,n_3)$ and ${\bf k'}=(2, k_2-k_1+2, k_3-k_1+2)=(2, 3, k_3-k_1+2)$. Now either $k_3'=4$, and in this case $k_3=k+2$, or by Lemma~\ref{23k} $n_3=k_3'-2=k_3-k_1$. Since in the latter case we have $k_3'\ge 5$, then we get $k_3-k\ge 3$. 
\end{proof}

So it is these two categories of games that we need to analyse. They will be analysed in the following two lemmas. We refer in their study to Lemmas~\ref{234} and~\ref{23k}.

\begin{lemma}
\label{general234}
A $3$-level game $H=H_\exists({\bf n},{\bf k})$ with ${\bf k}=(k,k+1,k+2)$ and $k \geq 3$ is roughly weighted  if and only if $n_1\ge k$ and one of the following conditions is satisfied:
\begin{enumerate}
\item[(a)] \text{${\bf n}=(n_1,2,2)$, ${\bf w}=\left(\frac{1}{k},\frac{1-2\alpha}{k},\frac{2\alpha}{k}\right)$, where $\alpha \in \left[0,\frac{1}{4}\right]$;}
\item[(b)] \text{${\bf n}=(n_1,n_2,2)$, $n_2\ge 3$, and ${\bf w}=\left(\frac{1}{k},\frac{1}{k},0\right)$.}
\end{enumerate}
\end{lemma}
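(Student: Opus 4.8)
The goal is to characterise when the three-level game $H=H_\exists({\bf n},{\bf k})$ with ${\bf k}=(k,k+1,k+2)$ and $k\ge 3$ is roughly weighted. The natural strategy, following the general method laid out after Theorem~\ref{shift-maximal-losing} and used in Lemmata~\ref{234} and~\ref{23k}, is to reduce to the already-solved case ${\bf k}=(2,3,4)$ by peeling off top-level players, and then translate the answer back up. Concretely, since there are no passers we have $k_1=k\ge 2$, so $A=\{1^{k-2}\}$ is a submultiset of the players, and by Proposition~\ref{remove_one} (applied $k-2$ times, or directly as a reduced game) $H^A=H_\exists({\bf n}',{\bf k}'')$ with ${\bf n}'=(n_1-k+2,n_2,n_3)$ and ${\bf k}''=(2,3,4)$. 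By Lemma~\ref{subs_and_reducs} together with Proposition~\ref{allnonzero}, $H$ being roughly weighted forces $H^A$ to be roughly weighted, and conversely one hopes to lift a rough voting representation of $H^A$ to one of $H$.

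\textbf{Key steps.} First I would observe that $n_1\ge k$ is necessary: by Theorem~\ref{Thm_1}(a) we have $k_1=k\le n_1$, and this also guarantees $n_1-k+2\ge 2$ so that $H^A$ genuinely has three levels. Second, apply Lemma~\ref{234} to $H^A$: since $H^A$ has no dummies iff $H$ has none (the bottom-level threshold gap $k_3-k_2=1$ is unchanged), $H^A$ roughly weighted means ${\bf n}'$ falls into one of the three cases (i)--(iii) of Lemma~\ref{234}, each pinning down $n_2,n_3$ and a family of weights ${\bf w}'=(w_1',w_2',w_3')$ with $w_1'=\tfrac12$. Since $n_2,n_3$ are shared between $H$ and $H^A$, this already restricts ${\bf n}$ to the two families in (a) and (b) (the case $n_2=n_3=2$ from Lemma~\ref{234}(iii), and the case $n_3=2$, $n_2\ge 3$ from Lemma~\ref{234}(ii)); Lemma~\ref{234}(i) gives $n_2=2,n_3\ge 3$, which I expect to be ruled out here because lifting its weights back up will fail. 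Third, for the lifting direction: given a candidate ${\bf w}$ for $H$, I would directly verify it is a rough voting representation by listing the shift-minimal winning coalitions of $H$ — which for ${\bf k}=(k,k+1,k+2)$ are $\{1^k\}$, $\{2^{k+1}\}$ (or $\{1,2^{n_2}\}$-type when $k+1>n_2$), and a bottom-level coalition of the type $\{2^{k+2-n_3},3^{n_3}\}$ — checking each has weight $\ge 1$, and checking that the unique shift-maximal losing coalition $M=\{1^{k-1},2,3\}$ has weight exactly $1$ (the shift-maximal equation $(k-1)w_1+w_2+w_3=1$). The scaling relation is that $H$ has rough representation $[1;{\bf w}]$ iff $H^A$ has $[1;{\bf w}/(1-w(A))]$ where $w(A)=(k-2)w_1$; since in all surviving cases $w_1=\tfrac1k$ we get $1-w(A)=2/k$, which is exactly what converts the weights $(\tfrac12,\ldots)$ of $H^A$ into the weights $(\tfrac1k,\ldots)$ claimed in (a), (b).

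\textbf{The main obstacle.} The delicate point is the back-and-forth between $H$ and its reduced game and making sure no spurious solutions survive. Two things need care: (1) showing Lemma~\ref{234}(i)'s solution ${\bf w}'=(\tfrac12,\tfrac14,\tfrac14)$ on ${\bf n}'=(n_1-k+2,2,n_3)$, $n_3\ge 3$, does \emph{not} lift to $H$ — the obstruction is that lifting gives ${\bf w}=(\tfrac1k,\tfrac1{2k},\tfrac1{2k})$, and then the bottom shift-minimal winning coalition $\{2^{k+2-n_3},3^{n_3}\}$ (legitimate since $k+2-n_3\le 2\le n_2$) has weight at most $(k+2)\cdot\tfrac1{2k}$, which is $<1$ for $k\ge 3$; and (2) confirming that in case (a), with $n_2=n_3=2$, the second shift-minimal winning coalition is $\{1,2^2\}$ (since $k+1>2=n_2$) giving $w_1+2w_2\ge 1$, and that together with the shift-maximal equation $(k-1)w_1+w_2+w_3=1$ and $w_1=\tfrac1k$ this forces exactly $w_2+w_3=\tfrac1k$ and $w_2\ge\tfrac1{2k}$, i.e. the interval $\alpha\in[0,\tfrac14]$. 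Case (b) is the ``$\alpha$ at its endpoint'' degenerate version where $n_2\ge 3$ slackens the $\{1,2^2\}$ constraint but a new constraint from $\{2^{k+2-n_3},3^{n_3}\}=\{2^k,3^2\}$ forces $w_3=0$. I would run each of these verifications explicitly but briefly, quoting Lemma~\ref{234} rather than re-deriving it.
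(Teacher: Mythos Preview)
Your proposal is correct and follows essentially the same route as the paper: reduce via $A=\{1^{k-2}\}$ to the ${\bf k}'=(2,3,4)$ case, invoke Lemma~\ref{234}, and then lift each of its three weight families back through the scaling $w_i'=w_i/(1-(k-2)w_1)$, finding that case~(iii) yields~(a), case~(ii) survives only at $\alpha=0$ giving~(b), and case~(i) does not lift. One small correction in your handling of case~(i): the coalition $\{2^{k+2-n_3},3^{n_3}\}$ is legitimate only when $k+2-n_3\le n_2=2$, i.e.\ $n_3\ge k$, so for smaller $n_3$ the paper instead tests $\{1^{k-n_3},2^{2},3^{n_3}\}$, whose weight $(2k-n_3+2)/(2k)$ is still $<1$ once $n_3\ge 3$ --- the conclusion is unchanged, but the case split is needed.
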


\begin{proof}
Firstly, it is easy to check that the games in (a) and (b) are indeed, roughly weighted with the specified set of weights.

If $H$ is roughly weighted, then upon reducing it to $H^A=H_\exists({\bf n}',{\bf k}')$, where $A=\{1^{k-2}\}$, we get $\textbf{k}'=(2,3,4)$. Also, $\textbf{n}'$ must fall into one of the three cases given in Lemma~\ref{234}. Let us  analyze them one by one. 
\par
\textbf{Case (i)}. ${\bf n}'=(n_1',2,n_3)$, where $n_1'\ge 2$, $n_3 \geq 3$, and ${\bf w}=(\frac{1}{2},\frac{1}{4},\frac{1}{4})$. As mentioned earlier, the voting representation of $H^A$ will then be
\[
\left[1; \frac{w_1}{1 - w_1(k-2)}, \frac{w_2}{1 - w_1(k-2)}, \frac{w_3}{1 - w_1(k-2)}\right]
\]
and it has to match the voting representation of the reduced game at hand, namely $[1;\frac{1}{2},\frac{1}{4},\frac{1}{4}]$. It follows that $\frac{w_1}{1 - w_1(k-2)} = \frac{1}{2}$, so $w_1 = \frac{1}{k}$. Also, $\frac{w_2}{1 - w_1(k-2)} =\frac{w_3}{1 - w_1(k-2)}= \frac{1}{4}$, giving  $w_2 =w_3= \frac{1}{2k}$. Now we test to see if these weights indeed define the original hierarchical game $H$. Now since $(k+2)w_3 = \frac{k+2}{2k}$ is never greater than or equal to 1 for $k \geq 3$ coalition $\{3^{k_3}\}$ does not exist in $H$ (otherwise it would be winning), that is, $n_3\le k+1$.  Also $w_2+(k+1)w_3=2w_2+kw_3 =\frac{k+2}{2k}$,
and a shift-minimal winning coalition $\{2^{i},3^{k+2-i}\}$, for $i=1,2$,  also does not exist in $H$ for $k\ge 3$. 

So the coalition $\{1^{(k+2)-n_2-n_3},2^{n_2},3^{n_3}\}$ must be a shift-minimal winning coalition in $H$. So its weight should be at least the threshold, which is 1, i.e., $\frac{k+2-2-n_3}{k} + \frac{2}{2k} + \frac{n_3}{2k} = \frac{2k-n_3+2}{2k} \geq 1$. But this is never true for $n_3 \geq 3$. Therefore $H$ is not roughly weighted in this case.
\par\smallskip

\textbf{Case (ii)}. ${\bf n}'=(n_1',n_2,2)$, where $n_1'\ge 2$, $n_2 \geq 3$, and ${\bf w}=(\frac{1}{2},\frac{1}{2}-\alpha,\alpha)$ for some $\alpha \in [0,\frac{1}{6}]$. Here $w_1$ is still $\frac{1}{k}$. But $\frac{w_2}{1 - w_1(k-2)} = \frac{1}{2}-\alpha$. It follows that $\frac{kw_2}{2} = \frac{1}{2}-\alpha$, so that $w_2 = \frac{1-2\alpha}{k}$. Also, $\frac{w_3}{1 - w_1(k-2)} = \alpha$, implying  $w_3 = \frac{2\alpha}{k}$. 
As we do not have dummies there must be a winning coalition consisting of $k+2$ players. This would be either  $\{2^k,3^2\}$ or $\{1^{k-n_2},2^{n_2},3^2\}$ depending on what is greater $n_2$ or $k$. But $w(\{2^k,3^2\}) = k\frac{1-2\alpha}{k} + 2\frac{2\alpha}{k} = 1-2\alpha+\frac{4\alpha}{k}$. For this to be winning we need $1-2\alpha+\frac{4\alpha}{k} \geq 1$, giving $\frac{4\alpha}{k} \geq 2\alpha$. So either we have $\frac{2}{k} \geq 1$ or $\alpha=0$. As $k>2$ we have the latter. Thus $\{2^k,3^2\}$ can be winning only for $\alpha =0$ in which case ${\bf w}=(\frac{1}{k},\frac{1}{k},0)$. \par
Suppose now that the winning coalition consisting of $k+2$ players is $\{1^{k-n_2},2^{n_2},3^2\}$. Its weight then is $\frac{k-n_2}{k} + \frac{n_2(1-2\alpha)}{k} + \frac{4\alpha}{k} \geq 1$. It follows that  $2\alpha  \geq \alpha n_2$, whence $\alpha=0$. In both cases we have (b). \par\smallskip

\textbf{Case (iii)}. ${\bf n}'=(n_1',2,2)$, where ${\bf w}=(\frac{1}{2},\frac{1}{2}-\alpha,\alpha)$, and $\alpha \in [0,\frac{1}{4}]$. This gives us case~(a).
\end{proof} 

Now to the remaining case.

\begin{lemma}
\label{general_k2k3}
Any 3-level game $G=H({\bf n},{\bf k})$, where ${\bf k}=(k,k+1,k_3)$ such that $k_3 - (k+1) \geq 2$ and $G$ has no passers and no dummies, is roughly weighted if and only if the following is true.
\begin{itemize}
\item [(i)] ${\bf n}=(n_1,n_2,n_3)$, where $n_3=k_3 - k \geq3$, and ${\bf w}=(\frac{1}{k},\frac{1}{k},0)$.
\end{itemize}
\end{lemma}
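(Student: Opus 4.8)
I would prove the two implications separately, obtaining the forward implication by a direct computation and the reverse one by reducing to Lemma~\ref{23k} with the same head-reduction used to prove Corollary~\ref{which_games}. For the ``if'' direction, suppose $n_3=k_3-k$; I claim $[1;\tfrac1k,\tfrac1k,0]$ is a rough voting representation of $G$. Writing $Q=\{1^{\ell_1},2^{\ell_2},3^{\ell_3}\}$, its weight is $(\ell_1+\ell_2)/k$. If $w(Q)>1$ then $\ell_1+\ell_2\ge k+1$, so either $\ell_1\ge k=k_1$ or $\ell_1+\ell_2\ge k+1=k_2$, and $Q$ is winning; if $w(Q)<1$ then $\ell_1+\ell_2\le k-1$, hence $\ell_1<k_1$, $\ell_1+\ell_2<k_2$, and (using $\ell_3\le n_3=k_3-k$) $\ell_1+\ell_2+\ell_3\le(k-1)+n_3=k_3-1<k_3$, so $Q$ is losing. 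Moreover $n_3=k_3-k\ge 3$ follows from $k_3-(k+1)\ge 2$, and this value of $n_3$ is precisely the borderline at which the third level has no dummies, so $G$ really is a game of the stated form.

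For the ``only if'' direction, let $[1;w_1,w_2,w_3]$ be a rough voting representation of $G$. By Theorem~\ref{BTW} a three-level game with no passers or dummies is not weighted, so Proposition~\ref{allnonzero} gives $w_1,w_2>0$ and Corollary~\ref{w(M)=q} gives $w(M)=1$ for the shift-maximal losing coalition $M=\{1^{k-1},2^1,3^{k_3-k-1}\}$. Set $A=\{1^{k-2}\}$, the empty coalition when $k=2$ (the least admissible value since there are no passers). As in the proof of Corollary~\ref{which_games}, $G^A=H_\exists({\bf n}',{\bf k}')$ with ${\bf k}'=(2,3,k_3-k+2)$ and ${\bf n}'=(n_1-k+2,n_2,n_3)$; conditions (a),(b) of Theorem~\ref{Thm_1} survive (using $n_1\ge k$, $n_2\ge 2$), so $G^A$ is a genuine three-level game, and ``no dummies'' is governed by the same inequality $n_3\ge k_3-k$ for $G$ and for $G^A$, so $G^A$ inherits it. Since $w_1>0$ for a player outside $A$, Lemma~\ref{subs_and_reducs} makes $G^A$ roughly weighted with representation $[\,1-w(A);w_1,w_2,w_3\,]$ where $w(A)=(k-2)w_1$; and since $A\subsetneq M$ with $M\setminus A=\{1^1,2^1,3^{k_3-k-1}\}$ containing players of positive weight, $w(A)<w(M)=1$, so by \eqref{normalisation} the game $G^A$ has rough voting representation $\bigl[1;\tfrac{w_1}{1-w(A)},\tfrac{w_2}{1-w(A)},\tfrac{w_3}{1-w(A)}\bigr]$.

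Finally, the third threshold of $G^A$ is $k_3-k+2\ge 5$, so Lemma~\ref{23k} forces $n_3=(k_3-k+2)-2=k_3-k$ --- the asserted equality --- and forces the quota-$1$ representation of $G^A$ to be $(\tfrac12,\tfrac12,0)$. Matching first coordinates, $\tfrac{w_1}{1-(k-2)w_1}=\tfrac12$ gives $kw_1=1$, so $w_1=\tfrac1k$; then $1-(k-2)w_1=\tfrac2k$, whence $\tfrac{w_2}{2/k}=\tfrac12$ gives $w_2=\tfrac1k$ and $\tfrac{w_3}{2/k}=0$ gives $w_3=0$, which is (i). I expect no genuine obstacle, as Lemma~\ref{23k} and the reduction machinery already supply the substance; the one place to be careful is checking $w(A)<1$ so that the normalisation~\eqref{normalisation} is legitimate, which is exactly where the positivity of $w_1,w_2$ and the identity $w(M)=1$ are used.
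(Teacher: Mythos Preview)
Your proof is correct and follows essentially the same route as the paper: reduce by $A=\{1^{k-2}\}$ to a game with ${\bf k}'=(2,3,k_3-k+2)$, invoke Lemma~\ref{23k} to force $n_3=k_3-k$ and the weights $(\tfrac12,\tfrac12,0)$ for the reduced game, and solve back via~\eqref{normalisation} to get ${\bf w}=(\tfrac1k,\tfrac1k,0)$. Your treatment is in fact a bit more careful than the paper's---you verify the ``if'' direction explicitly rather than leaving it to the reader, and you justify $w(A)<1$ (needed for the normalisation) via $A\subsetneq M$ and $w(M)=1$, a point the paper passes over silently.
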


\begin{proof} 
Upon reducing $G$ to $G^A=H'({\bf n}',{\bf k}')$, where $A=\{1^{k-2}\}$, we get $\textbf{k}'=(2,3,k_3')$, where $k_3' \geq 5$. If the game $G^A$ is roughly weighted, then by Lemma~\ref{23k} it has to have $\textbf{n}=(n_1,n_2,k_3'-2)$, where $n_2 \geq 2$, and $n_1 \geq 2$, and the weights consistent with ${\bf w}=(\frac{1}{2},\frac{1}{2},0)$. So we get $\frac{w_1}{1 - w_1(k-2)} = \frac{1}{2}$, so $w_1 = \frac{1}{k}$. Also, $\frac{w_2}{1 - w_1(k-2)} = \frac{1}{2}$, meaning $\frac{w_2}{1 - w_1k + 2w_1} = \frac{1}{2}$, so $\frac{w_2}{1 - 1 + 2\frac{1}{k}} = \frac{1}{2}$. Therefore $w_2 = \frac{1}{k}$, and $w_3 = 0$. It can be easily checked that these weights give a valid hierarchical game where $\textbf{n}=(n_1,n_2,k_3-k), w=(\frac{1}{k},\frac{1}{k},0)$. 
\end{proof}

\subsection{Main Results}

Finally we are ready to collect all facts together and give a full characterisation of roughly weighted disjunctive hierarchical games. As the classification of weighted hierarchical games is already given in Theorem~\ref{BTW} we will characterize only nonweighted ones. Then by duality we will derive similar results for conjunctive hierarchical games.
 
\begin{theorem}
\label{RW_HTAS}
If $H=H_\exists(\textbf{n},\textbf{k})$ is an $m$-level nonweighted hierarchical game, then it is roughly weighted if and only if one of the following is true:
\begin{itemize}
\item[(i)] $k_1=1$ and $m$ is arbitrary;
\item[(ii)] ${\bf k}=(2,4) $ with $n_1\ge 2$ and  $n_2\ge 4$;
\item[(iii)]  ${\bf k}=(k,k+2)$, with $n_1 \ge k > 2$ and $n_2=4$;
\item[(iv)] ${\bf k}=(2,3,4)$ and ${\bf n}=(n_1,2,n_3)$ with $n_1\ge 2, n_3\ge 3$;
\item[(v)] ${\bf k}=(k,k+1,k+2)$, and  ${\bf n}=(n_1,2,2)$, where $2 < k \le n_1$ or ${\bf n}=(n_1,n_2,2)$ with $2 < k \le n_1$ and $n_2\ge 3$;
\item[(vi)] ${\bf k}=(k,k+1,k_3)$, ${\bf n}=(n_1,n_2,n_3)$ such that $2\le k\le n_1$, and $n_3 = k_3 - k\ge 3$.
\item[(vii)] $k_m= k_{m-1}+n_m$ and the subgame $H_\exists({\bf n}',{\bf k}')$, where ${\bf n}'=(\row n{m-1})$ and ${\bf k}'=(\row k{m-1})$, falls under one of the types (i)--(vi).
\end{itemize}
\end{theorem}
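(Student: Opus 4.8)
The plan is to assemble the pieces proved so far, organising the argument by the number of ``nontrivial'' levels. First, observe that if $k_1=1$ then by Theorem~\ref{Thm_1} the whole first level consists of passers, and any game with a passer $p$ is roughly weighted: the representation $[0;w]$ with $w(p)=1$ and all other weights $0$ works, since $\sum_{i\in X}w_i>0$ holds exactly when $p\in X$, which forces $X$ to be winning, while $\sum_{i\in X}w_i<0$ never holds. This settles the ``$\Leftarrow$'' direction of case~(i) and lets us assume $k_1\ge 2$ henceforth. Next, deal with dummies: if $k_m=k_{m-1}+n_m$ the last level consists of dummies, and $H$ differs from $H'=H_\exists((\row n{m-1}),(\row k{m-1}))$ only by this dummy level. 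Since $H'=H_{\{m^{n_m}\}}$ is a subgame of $H$ (Proposition~\ref{cut_tail}) and some player outside level $m$ must carry positive weight in any rough representation of $H$ (otherwise a winning coalition inside the first $m-1$ levels would have weight $0$, which forces the quota to $0$ and then makes any coalition containing a dummy winning---absurd), Lemma~\ref{subs_and_reducs} shows $H'$ is roughly weighted whenever $H$ is; conversely, padding a rough representation of $H'$ with zero weights on level $m$ gives one for $H$. Moreover $H$ is nonweighted iff $H'$ is, and by the canonicity condition~(b) of Theorem~\ref{Thm_1} the last level of $H'$ is \emph{not} a dummy level, so this reduction is applied at most once. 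Thus, modulo cases~(i) and~(vii), it remains to characterise nonweighted roughly weighted disjunctive hierarchical games with no passers and no dummies.

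For such games Lemma~\ref{fourlevels} gives $m\le 4$. The case $m=1$ is vacuous, since every one-level game is a simple majority game, hence weighted. For $m=2$, Theorem~\ref{twolevel} produces exactly the families~(ii) and~(iii), with their representations. For $m=3$, Lemma~\ref{k1k2} forces $k_2=k_1+1$, and Corollary~\ref{which_games} then splits the remaining games into the two categories $\mathbf{k}=(k,k+1,k+2)$ and $\mathbf{k}=(k,k+1,k_3)$ with $n_3=k_3-k\ge 3$; applying Lemma~\ref{234} (for $k=2$) and Lemma~\ref{general234} (for $k\ge 3$) to the first category and Lemma~\ref{general_k2k3} to the second yields precisely the families~(iv),~(v),~(vi), together with the explicit rough voting representations needed for ``$\Leftarrow$'' in those cases (the $k=2$ instances of the $(k,k+1,k+2)$ family with a two-element bottom level being folded into~(v)).

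It remains to rule out $m=4$. Here Corollary~\ref{L4} is the key input: a roughly weighted four-level game without passers or dummies has $w_4=0$ and its three-level subgame $H''=H_\exists((n_1,n_2,n_3),(k_1,k_2,k_3))$ is roughly weighted with all three weights positive. Inspecting the three-level classification from the previous paragraph, a three-level game admits a strictly positive rough representation only when $\mathbf{k}_{123}\in\{(2,3,4),(k,k+1,k+2)\}$ and the bottom level is very small ($n_3=2$, or $\mathbf{k}_{123}=(2,3,4)$ with $n_2=2$); in each such case normalisation forces $w_1=1/k_1$, and $w(M'')=(k_1-1)w_1+(k_2-k_1)w_2+(k_3-k_2)w_3=1$ for the shift-maximal losing coalition $M''$ of $H''$ by Corollary~\ref{w(M)=q}. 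Since $H''=H_{\{4^{n_4}\}}$ is a subgame of $H$, the same weights work for $H$, so $H$ has rough representation $[1;w_1,w_2,w_3,0]$. One now compares the equation above with the inequality coming from whichever shift-minimal winning coalition of $H$ uses the bottom level---which, because $w_4=0$ and $n_4\ge k_4-k_3+1$, reduces to something of the form $(k_4-n_4)w_3\ge1$, or $(k_4-n_3-n_4)w_2+n_3w_3\ge1$, with the relevant multiplicities bounded using $n_4\ge k_4-k_3+1$; in every sub-case this is incompatible with $w_2,w_3$ being as small as $w(M'')=1$ forces them to be. This contradiction shows a fourth nontrivial level cannot be added, completing the list; together with cases~(i) and~(vii) and the ``$\Leftarrow$'' directions (the passer representation for~(i), the representations supplied by Theorem~\ref{twolevel} and Lemmas~\ref{234},~\ref{general234},~\ref{general_k2k3} for~(ii)--(vi), and zero-padding for~(vii)) this proves the theorem. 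The main obstacle is precisely the $m=4$ step: it is the only place where no single prior lemma suffices, and it requires combining Corollary~\ref{L4} with a careful reading of the three-level classification and a small but case-laden computation; a secondary nuisance is matching the outputs of the three three-level lemmas cleanly to the statements of~(iv)--(vi).
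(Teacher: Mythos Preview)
Your overall architecture matches the paper's: strip off passers (case~(i)) and a possible dummy bottom level (case~(vii)), invoke Lemma~\ref{fourlevels} to bound $m\le 4$, quote Theorem~\ref{twolevel} for $m=2$, and assemble Corollary~\ref{which_games} with Lemmata~\ref{234}, \ref{general234}, \ref{general_k2k3} for $m=3$. The one substantive divergence is the $m=4$ step.

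The paper does not case-analyse the three-level classification for positivity of $w_3$. Instead it first takes the reduced game $H^A$ with $A=\{1^{k_1-2}\}$, obtaining a four-level game with first threshold $2$ whose weights are the original $w_i$ rescaled by \eqref{normalisation}; in particular the third weight remains positive. It then passes to the three-level subgame of this reduced game and applies Lemma~\ref{k_1=2} directly: since $w_3'\ne 0$, that lemma forces $\mathbf{k}''=(2,3,4)$. Now only Lemma~\ref{234} is needed, and the paper uses the single uniform fact $w_2+w_3=\tfrac12$, $w_3\le\tfrac14$ (valid in all three sub-cases of Lemma~\ref{234}) to see that the winning coalition $\{2,3^2,4^{k_4-3}\}$ or $\{3^3,4^{k_4-3}\}$ has weight at most $\tfrac34$. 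Your route is correct too, but it trades that one reduction for a four-way case split (Lemma~\ref{234}(i)--(iii) plus Lemma~\ref{general234}(a)), each requiring its own shift-minimal winning coalition and its own arithmetic check; the paper's reduction to $k_1=2$ is exactly what lets Lemma~\ref{k_1=2} collapse all of these to a single case.

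One small slip to flag: you say the $k=2$ instances of the $(k,k+1,k+2)$ family with $n_3=2$ are ``folded into~(v)'', but item~(v) in the statement has the explicit restriction $k>2$, and those $k=2$, $n_3=2$ games do not sit inside~(iv) or~(vi) either. This is a wrinkle in how the theorem is phrased rather than in your argument, but you should not claim the match is clean there.
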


\begin{proof} 
Firstly, we note that if $k_m\ge k_{m-1}+n_m$, then the $m$th level of players consists of dummies. This game will be roughly weighted if and only if the game $H_\exists({\bf n}',{\bf k}')$ is. This situation is described in (vii). So we consider that $k_m< k_{m-1}+n_m$.  If $k_1=1$, then we can set $w_1=1$ and $w_i=0$ for $i>1$. Suppose $k_1>1$. Then $H$ does not have passers and dummies so by Lemma~\ref{fourlevels} $m\le 4$.

If $m=2$  the result follows from Theorem~\ref{twolevel}. If  $m=3$ the result follows from Lemmata of the previous section. We will now show that the fourth level cannot be added without introducing dummies. Suppose that $H$ has the fourth level whose players are not dummies. We know from Corollary~\ref{L4} and Proposition~\ref{allnonzero} that $w_3\ne 0$ and $w_4=0$. 

By letting $A=\{1^{k_1-2}\}$ and considering the reduced game $H'=H^A$ we obtain a 4-level game $H'=H_\exists({\bf n}',{\bf k}')$ which is roughly weighted by Lemma~\ref{subs_and_reducs} and Proposition~\ref{allnonzero}. We can now consider the subgame $H''=H_\exists({\bf n}'',{\bf k}'')$ of $H'$, where ${\bf n}''=(n_1',n_2',n_3')$ and ${\bf k}''=(k_1',k_2',k_3')$. It will be again roughly weighted by Lemma~\ref{subs_and_reducs}  and we may apply Lemma~\ref{k_1=2} to this 3-level game. By Lemma~\ref{subs_and_reducs} all weights of this game will be nonzero. By Lemma~\ref{k_1=2} we will then have $\textbf{k}''=(2,3,4)$ and thus
 $\textbf{k}'=(2,3,4,k_4)$, $\textbf{n}'=(n_1',n_2,n_3,n_4)$, where $n_1'=n_1-k_1+2$. 
As there are no dummies in $H$, by Theorem~\ref{Thm_1} we have $n_4 \geq k_4-4+1=k_4-3$. Thus either $\{3^3,4^{k_4-3}\}$ or $\{2,3^2,4^{k_4-3}\}$ is a winning coalition as total number of players in each is $k_4$. By Lemma~\ref{234}, in all cases when $H'$ is roughly weighted we have $w_2+w_3=\frac{1}{2}$ and $w_3\le \frac{1}{4}$. However, the weight of the coalition $\{2,3^2,4^{k_4-3}\}$ is $w_2+2w_3=\frac{1}{2}+w_3\le \frac{3}{4}$. The same is true for the coalition $\{3^3,4^{k_4-3}\}$, giving a contradiction.
\end{proof}

And the following gives the full characterisation of roughly weighted conjunctive simple games.

\begin{theorem}
\label{RW_HTAS_con}
If $H=H_\forall(\textbf{n},\textbf{k})$ is an $m$-level nonweighted conjunctive hierarchical game. Then it is roughly weighted if and only if one of the following is true:
\begin{itemize}
\item[(i)] $k_1=n_1$ and $m$ is arbitrary;
\item[(ii)] ${\bf k}=(n_1-1,n_1+n_2-3)$, where $n_1\ge 2$, $n_2 \geq 4$;
\item[(iii)]  ${\bf k}=(k,k+2)$, with $1 \leq k < n_1 - 1$, and $n_2=4$;
\item[(iv)] ${\bf k}=(n_1-1,n_1,n_1+n_3-1)$ and ${\bf n}=(n_1,2,n_3)$ such that $n_1 \geq 2, n_3 \geq 3$;
\item[(va)] ${\bf k}=(k,k+1,k+2)$, where $1 \leq k < n_1 - 1$ and ${\bf n}=(n_1,2,2)$; 
\item[(vb)] ${\bf k}=(k_1,k_2,k_2+1)$ with ${\bf n}=(n_1,n_2,2)$, where $k_2-k_1=n_2-1$, $1 \leq k_1 < n_1 - 1$ and $n_3 \geq 3$;
\item[(vi)] ${\bf k}=(k_1,k_2,k_2+1)$, with  $1 \leq k \le n_1 - 1$ and $n_2 \geq 3$;
\item[(vii)] $k_{m-1}= k_m$, and the subgame $H_\forall({\bf n}',{\bf k}')$, where ${\bf n}'=(\row n{m-1})$ and ${\bf k}'=(\row k{m-1})$, falls under one of the types (i)--(vi).
\end{itemize}
\end{theorem}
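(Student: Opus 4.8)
The plan is to derive Theorem~\ref{RW_HTAS_con} entirely from the already-established disjunctive characterization (Theorem~\ref{RW_HTAS}) by means of the duality machinery. The key facts are: a game $G$ is roughly weighted if and only if $G^{*}$ is roughly weighted (the remark after the definition of the reduced game, citing \citeA{tz:b:simplegames}); weightedness is likewise self-dual, so $H_\forall({\bf n},{\bf k})$ is nonweighted precisely when its dual is nonweighted; and, by Theorem~\ref{all-exists-dual}, $H_\forall({\bf n},{\bf k})^{*}=H_\exists({\bf n},{\bf k}^{*})$ with ${\bf k}^{*}$ given by the transform~\eqref{transform}. So the proof is essentially a translation exercise: apply Theorem~\ref{RW_HTAS} to $H_\exists({\bf n},{\bf k}^{*})$ and rewrite each of its seven cases back in terms of ${\bf k}$ using ${\bf k}^{{*}{*}}={\bf k}$.

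The steps, in order, would be as follows. First I would record that $H=H_\forall({\bf n},{\bf k})$ is nonweighted and roughly weighted iff $H^{*}=H_\exists({\bf n},{\bf k}^{*})$ is, and that passers/dummies swap under duality: the top level of $H_\forall$ consists of blockers iff the top level of $H_\exists$ consists of passers, which by Theorem~\ref{Thm_1} is the condition $k_1^{*}=1$, i.e.\ $k_1=n_1$; this yields case (i). Then I would take each of cases (ii)--(vi) of Theorem~\ref{RW_HTAS}, written for the dual vector ${\bf k}^{*}$, and invert the transform. For the two-level cases, $k_1^{*}=n_1-k_1+1$ and $k_2^{*}=n_1+n_2-k_2+1$; setting $(k_1^{*},k_2^{*})=(2,4)$ gives $k_1=n_1-1$ and $k_2=n_1+n_2-3$, which is case (ii); setting $(k_1^{*},k_2^{*})=(k,k+2)$ with $k>2$ and $n_2=4$ gives, after using $k_2^{*}-k_1^{*}=2$, that $k_2-k_1=n_1+n_2-2-2\cdot\text{(shift)}$... more carefully, $k_1=n_1-k+1$ and $k_2=n_1+n_2-k-1$, so $k_2-k_1=n_2-2=2$ again $n_2=4$, and the range $k>2$ translates to $k_1<n_1-1$, giving case (iii). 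For the three-level cases one does the same with $k_i^{*}=\sum_{j\le i}n_j-k_i+1$; case (iv) of Theorem~\ref{RW_HTAS} (${\bf k}^{*}=(2,3,4)$, ${\bf n}=(n_1,2,n_3)$) inverts to ${\bf k}=(n_1-1,n_1,n_1+n_3-1)$ — note the middle level has $n_2=2$ — giving case (iv) here; cases (v) and (vi) of Theorem~\ref{RW_HTAS} similarly produce (va), (vb) and (vi), where the pattern $k_3^{*}=k_2^{*}+1$ in the dual becomes $k_3=k_2+1$ in $H$ exactly when the dual's relation $k_2^{*}-k_1^{*}$ is matched by $n$'s, which is what the conditions $k_2-k_1=n_2-1$ etc.\ encode. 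Finally, case (vii) of Theorem~\ref{RW_HTAS} concerns $k_m^{*}=k_{m-1}^{*}+n_m$, i.e.\ the bottom level of $H_\exists$ being all dummies; dually this is $k_{m-1}=k_m$, the bottom level of $H_\forall$ being dummies, and the subgame there corresponds to the reduced game here (since $(G_A)^{*}=(G^{*})^A$), with the remaining $(m-1)$ levels falling under (i)--(vi); this is case (vii).

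The only genuinely delicate point — and the step I expect to be the main obstacle — is bookkeeping the boundary conditions and the passer/dummy/blocker dictionary so that no case is lost or double-counted. In particular one must check that the hypothesis ``$H$ has no passers and no dummies'' used throughout Section~4 corresponds, after dualizing, to ``$H^{*}$ has no dummies and no passers,'' and that the degenerate levels absorbed into case (vii) of Theorem~\ref{RW_HTAS} (where $k_m=k_{m-1}+n_m$) line up with the convention $k_{m-1}=k_m$ for conjunctive games stated after Theorem~\ref{Thm_1}. There is also a mild subtlety in cases (ii), (iii), (va), (vb): the disjunctive side has an explicit inequality like $n_1\ge k$ or $n_1\ge 2$, and one must verify that its image under the transform is the stated strict inequality $k<n_1-1$ (not $\le$), which comes precisely from the requirement $k_1^{*}\ge 2$, equivalently $k_1\le n_1-1$, combined with the canonicity condition (a) of Theorem~\ref{Thm_1}. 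Once this dictionary is fixed, each conversion is a one-line substitution, and the theorem follows with no further computation.
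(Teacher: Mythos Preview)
Your proposal is correct and follows essentially the same approach as the paper: the paper's proof also consists of nothing more than applying the duality transform~\eqref{transform} case by case to the list in Theorem~\ref{RW_HTAS}, computing $k_i^{*}$ from the disjunctive data and reading off the conjunctive parameters. Your additional remarks about boundary bookkeeping (blockers/passers, dummies, and the strict vs.\ nonstrict inequalities on $k_1$) are more explicit than what the paper writes, but the underlying argument is identical.
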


\begin{proof}
The proof will consists of calculating the duals for the games listed in Theorem~\ref{RW_HTAS}. \par\smallskip

(i) In the dual game by Theorem~\ref{RW_HTAS} we have $k_1=1$, so in this game we have $n_1-k_1+1=k^*_1$, meaning $k_1^*=n_1$.\par\smallskip 

(ii) Here we have $k^*_1=n_1-k_1+1$ where $k_1=2$, so $k^*_1=n_1-1$. Also $k^*_2=n_1+n_2-k_2+1=n_1+n_2-3$. \par\smallskip

(iii) In the dual game $k^*_1=n_1-k+1$ and $k^*_2=n_1+n_2-(k+2)+1=n_1-k+3=k^*_1+2$. As $n_1\ge k>2$, we have $n_1-1 > k^*_1 \geq 1$.\par\smallskip

(iv) Since ${\bf k}=(2,3,4)$ in the dual game, then this gives $n_1-k_1+1=k^*_1$, so that $k^*_1=n_1-1$. Also, $n_1+n_2-k_2+1=k^*_2$, giving $k^*_2=n_1+n_2-2=n_1$ since $n_2=2$. We also get $k^*_3=n_1+n_2+n_3-k_3+1=n_1+n_3-1$. \par\smallskip

(v) We have two cases here so we treat them separately.
\begin{enumerate}
\item[(a)] $k^*_1=n_1-k+1$, $k^*_2=n_1+n_2-(k+1)+1=n_1-k+2=k^*_1+1$,  and  $k^*_3=n_1+n_2+n_3-(k+2)+1=n_1-k+3=k^*_1+2$.
\item[(b)] $k^*_1=n_1-k+1$, $k^*_2=n_1+n_2-(k+1)+1=n_1+n_2-k$, and $k^*_3=n_1+n_2+n_3-(k+2)+1=n_1+n_2-k+1=k^*_2+1$.
\end{enumerate}

(vi) $k^*_1=n_1-k_1+1$, $k^*_2=n_1+n_2-(k_1+1)+1=n_1+n_2-k_1=k^*_1+n_2-1$,  $k^*_3=n_1+n_2+n_3-k_3+1=n_1+n_2-k_1+1=k^*_2+1$.\par\smallskip

(vii) We have $n_m-k_m=k_{m-1}$ and this implies $k^*_m=k^*_{m-1}$.
\end{proof}

\section{Conclusion and Further Research}

This paper provides a complete characterization of roughly weighted hierarchical gams both disjunctive and conjunctive. As we have seen weighted hierarchical games can have only up to two nontrivial levels and roughly weighted only up to three levels. So in general, hierarchical games are rather far from weighted ones.  \citeA{GHS} introduced three hierarchies of simple games, each depend on a single parameter and for each hierarchy the union of all classes is the whole class of simple games.  One idea that was suggested is to generalize roughly weighted games as follows. Rough weightedness allows just one value of the threshold $q=1$ (after normalization), where coalitions of weight 1 can be both losing and winning. Instead of just single threshold value we may allow values of thresholds from a certain interval $[1,a]$ to possess this property, that is, coalitions whose weight is between $1$ and $a$ can be both winning or losing. They denote this class of games ${\cal C}_a$. The question which deserves further study is how big should be $a$ so that  all hierarchical $n$-level games are in ${\cal C}_a$.

\bibliographystyle{spmpsci}
\bibliography{short}


\end{document}